\newcommand{\consistQ}[1]{\mathbf{E}_C^{#1}}
\newcommand{\consist}[1]{\hyperref[eq:consist]{\mathbf{E}_C^{#1}}}
\newcommand{\interpolQ}[1]{\mathbf{E}_I^{#1}}
\newcommand{\interpol}[1]{\hyperref[eq:approx]{\mathbf{E}_I^{#1}}}
\newcommand{\errQ}[1]{\mathbf{e}^{#1}}
\newcommand{\err}[1]{\hyperref[eq:split]{\mathbf{e}^{#1}}}
\newcommand{\errpQ}[1]{\tilde{\mathbf{e}}^{#1}}
\newcommand{\errp}[1]{\hyperref[eq:split proj]{\tilde{\mathbf{e}}^{#1}}}
\newcommand{\errdQ}[1]{\mathbf{e}_h^{#1}}
\newcommand{\errd}[1]{\hyperref[eq:split]{\mathbf{e}_h^{#1}}}
\newcommand{\erriQ}[1]{\mathbf{e}_I^{#1}}
\newcommand{\erri}[1]{\hyperref[eq:split]{\mathbf{e}_I^{#1}}}
\newcommand{\erripQ}[1]{\tilde{\mathbf{e}}_I^{#1}}
\newcommand{\errip}[1]{\hyperref[eq:split proj]{\tilde{\mathbf{e}}_I^{#1}}}
\newcommand{\N}{\mathbb{N}}
\newcommand{\eps}{\varepsilon}
\newcommand{\R}{\mathbb{R}}
\providecommand{\C}{\mathcal{C}}
\renewcommand{\C}{\mathcal{C}}
\newcommand{\V}{\mathcal{V}}
\newcommand{\W}{\mathcal{W}}
\renewcommand{\H}{\mathcal{H}}
\renewcommand{\L}{\mathcal{L}}
\renewcommand{\P}{\mathcal{P}}
\newcommand{\F}{\mathcal{F}}
\renewcommand{\S}{\mathcal{S}}
\newcommand{\I}{\mathcal{I}}
\newcommand{\E}{\mathcal{E}}
\newcommand{\T}{\mathcal{T}}
\newcommand{\x}{\mathbf{x}}
\newcommand{\dt}{\Delta t}
\newcommand{\n}{\mathbf{n}}
\newcommand{\w}{\mathbf{w}}
\renewcommand{\O}{\mathcal{O}}
\newcommand{\Om}{\Omega}
\providecommand{\G}{\Gamma}
\renewcommand{\G}{\Gamma}
\renewcommand{\i}{\infty}
\newcommand{\remove}[1]{}
\newcommand{\id}{\operatorname{id}}
\renewcommand{\div}{\operatorname{div}}
\newcommand{\dist}{\operatorname{dist}}
\newcommand{\meas}{\operatorname{meas}}
\newcommand{\finaltime}{\mathbf{T}}
\newcommand{\extendedonly}[1]{}
\newcommand{\shortonly}[1]{#1}
\newcommand{\Vh}{\hyperref[def:Vh]{\V_h}}
\newcommand{\Vhbq}[1]{\hyperref[def:Vh]{\V_h^{(#1)}}}
\newcommand{\Vhn}[1]{\hyperref[def:Vhn]{\V_h^{#1}}}
\newcommand{\Omhn}[1]{\hyperref[def:Vhn]{\Om_h^{#1}}}
\newcommand{\Omihn}[2]{{\Om_{#1,h}^{#2}}}
\newcommand{\Ghn}[1]{\hyperref[def:Vhn]{\G_h^{#1}}}
\newcommand{\Thn}[1]{\hyperref[def:Vhn]{\T_h^{#1}}}
\newcommand{\Vdn}[2]{\hyperref[def:Vdn]{\V_{#1}^{#2}}}
\newcommand{\Sdn}[2]{\hyperref[def:Sdn]{\S_{#1}^{#2}}}
\newcommand{\Srn}[2]{\hyperref[def:Srn]{\S_{#1}^{#2}}}
\newcommand{\Sdelta}{\hyperlink{def:Sdelta}{S_{\delta}}}
\newcommand{\TdnS}[2]{\hyperref[def:TdnSOdnS]{\T_{#1}^{#2\!,\!\,\S}}}
\newcommand{\OdnS}[2]{\hyperref[def:TdnSOdnS]{\mathcal{O}_{#1}^{#2\!,\!\,\S}}}
\newcommand{\Omdn}[2]{\hyperref[def:OmdnTdnOdn]{\Omega_{#1}^{#2}}}
\newcommand{\Odn}[2]{\hyperref[def:OmdnTdnOdn]{\mathcal{O}_{#1}^{#2}}}
\newcommand{\Tdn}[2]{\hyperref[def:OmdnTdnOdn]{\mathcal{T}_{#1}^{#2}}}
\newcommand{\Omn}[1]{\hyperlink{def:Omn}{\Om^{#1}}}
\newcommand{\Gn}[1]{\hyperlink{def:Gn}{\G^{#1}}}
\newcommand{\Omepsn}[1]{\hyperlink{def:Omepsn}{\Om_{\epsilon}^{#1}}}
\newcommand{\Th}{\hyperlink{def:Th}{\T_h}}
\newcommand{\Q}{\hyperref[def:Q]{Q}}
\newcommand{\Qe}{\hyperref[def:Q]{Q_\epsilon}}
\newcommand{\B}{\hyperlink{def:B}{\widetilde{\Omega}}}
\newcommand{\Omphi}[2]{\hyperlink{def:Omphi}{\Omega_{#1}^{#2}}}
\newcommand{\Omphih}[2]{\hyperlink{def:Omphih}{\Omega_{#1}^{#2}}}
\newcommand{\Orn}[2]{\hyperlink{def:Orn}{\O_{#1}^{#2}}}
\newcommand{\OrnS}[2]{\hyperlink{def:OrnS}{\O_{#1}^{#2,\S}}}
\newcommand{\Vrn}[2]{\hyperlink{def:Vrn}{\V_{#1}^{#2}}}
\newcommand{\xxi}{\hyperref[eq:timestep]{\xi}}
\newcommand{\Ep}{\hyperlink{def:ep}{\mathcal{E}^{\mathcal{P}}}}
\newcommand{\enorm}[2]{\hyperref[eq:norm]{|\!|\!|}#1\hyperref[eq:norm]{|\!|\!|_{#2}}}
\newcommand{\Frn}[2]{\hyperref[eq:facetset]{\mathcal{F}_{#1}^{#2}}}
\newcommand{\Firn}[3]{{\mathcal{F}_{#1,#2}^{#3}}}
\newcommand{\bh}[1]{\hyperref[def:bh]{b_h^{#1}}}
\newcommand{\bih}[2]{{b_{#1,h}^{#2}}}
\newcommand{\fhn}[1]{\hyperlink{def:fhn}{f_h^{#1}}}
\newcommand{\srn}[2]{\hyperref[eq:ghostpenalty]{s_{#1}^{#2}}}
\newcommand{\sirn}[3]{{s_{#1,#2}^{#3}}}
\newcommand{\ext}{\hyperlink{def:ext}{e}}
\newcommand{\omx}[1]{\hyperref[eq:nodepatch]{\omega(#1)}}
\newcommand{\omF}[1]{\hyperref[eq:facetpatch]{\omega(#1)}}
\newcommand{\omT}[1]{\hyperref[eq:elementpatch]{\omega(#1)}}
\newcommand{\homega}[1]{\hyperlink{def:hom}{\hat{\omega}(#1)}}
\newcommand{\Thneps}[2]{\hyperref[lemma:dmax]{\hat{T}^{#1}_{#2}}}
\newcommand{\Tneps}[2]{\hyperlink{def:Tneps}{T^{#1}_{#2}}}
\renewcommand{\d}{\delta}
\newcommand{\dd}{\hyperlink{cond:delta}{\delta}}
\newcommand{\K}{\hyperref[ass:paths]{K}}
\newcommand{\LagL}{\hyperlink{def:L}{L}}
\newcommand{\dtBDFb}{\hyperref[eq:stencil:BDF2]{\partial_{\dt}^{2}}}
\newcommand{\dtBDFr}[1]{\hyperref[eq:stencils]{\partial_{\dt}^{#1}}}
\newcommand{\uIn}[1]{\hyperlink{def:uIn}{u_I^{#1}}}
\newcommand{\Ihn}[1]{\hyperlink{def:Ihn}{\I_h^{#1}}}
\newcommand{\PPsi}{\hyperlink{def:Psi}{\Psi}}
\newcommand{\Psin}[1]{\hyperlink{def:Psi}{\Psi^{#1}}}
\newcommand{\lift}{\hyperlink{def:lift}{\ell}}
\newcommand{\Ty}{\hyperlink{def:Taylor}{\mathfrak{T}}}
\newcommand{\ltwo}{\hyperlink{def:l2}{l_2}}
\newcommand{\Pinm}[2]{\hyperref[eq:defprojr]{\Pi^{#1}_{#2}}}
\newcommand{\Pimn}[2]{\hyperref[eq:defprojr]{\Pi_{#1}^{#2}}}
\newcommand{\Pin}[1]{\hyperlink{def:proj}{\Pi^{#1}}}
\newcommand{\TnOm}[1]{\hyperlink{def:TnOm}{T_\Om^{#1}}}
\newcommand{\bdftwonorm}[4]{\hyperref[eq:bdf2norm]{\Vert ( } #1 \hyperref[eq:bdf2norm]{,} #2 \hyperref[eq:bdf2norm]{) \Vert_{#3}^{#4}}}
\newcommand{\LinfW}[1]{\hyperlink{def:LinfW}{\L^\infty(\W^{#1}_{\infty})}}
\newcommand{\LinfH}[1]{\hyperlink{def:LinfH}{\L^\infty(\H^{#1})}}
\newcommand{\yihat}{\hyperlink{def:yihat}{\hat{y}_i}}
\newcommand{\new}[1]{#1}
\newcommand{\newc}[1]{#1}
\crefname{hypothesis}{Hypothesis}{Hypotheses}
\title{Isoparametric unfitted BDF -- Finite element method for PDEs on evolving domains
\thanks{Submitted to the editors DATE.
\funding{This work was funded by the German Science Foundation (DFG) within the project "LE 3726/1-1".}}}
\author{Yimin Lou\thanks{Institute for Numerical and Applied Mathematics, University of G\"{o}ttingen, Germany 
    (\email{lou@math.uni-goettingen.de}).}
\and Christoph Lehrenfeld\thanks{Institute for Numerical and Applied Mathematics, University of G\"{o}ttingen, Germany 
  (\email{lehrenfeld@math.uni-goettingen.de}).}
}
\newtheorem{assumption}{assumption}
\begin{document}

\maketitle

\begin{abstract}
We propose a new discretization method for PDEs on moving domains in the setting of unfitted finite element methods, which is provably higher-order accurate in space and time. 
In the considered setting, the physical domain that evolves essentially arbitrarily through a time-independent computational background domain, is represented by a level set function.
For the time discretization, the application of standard time stepping schemes that are based on finite difference approximations of the time derivative is not directly possible, as the degrees of freedom may get active or inactive across such a finite difference stencil in time.
In [Lehrenfeld, Olshanskii. An Eulerian finite element method for PDEs in time-dependent domains. ESAIM: M2AN, 53:585--614, 2019] this problem is overcome by extending the discrete solution at every timestep to a sufficiently large neighborhood so that all the degrees of freedom that are relevant at the next time step stay active. But that paper focuses on low-order methods. We advance these results with introducing and analyzing realizable techniques for the extension to higher order. 
To obtain higher-order convergence in space and time, we combine the BDF time stepping with the isoparametric unfitted FEM. 
The latter has been used and analyzed for several stationary problems before. 
However, for moving domains the key ingredient in the method, the transformation of the underlying mesh, becomes time-dependent which gives rise to some technical issues. We treat these with special care, carry out an a priori error analysis and two numerical experiments. 
\end{abstract}

\begin{keywords}
  Eulerian time stepping, isoparametric FEM, unfitted FEM, evolving domains, ghost penalty, stabilization, higher order FEM, BDF, projection errors
\end{keywords}

\begin{AMS}
  65M12, 65M60, 65M85
\end{AMS}

\section{Introduction}
\label{sec:intro}
Partial differential equations (PDEs) posed on time-de\-pen\-dent domains appear in many problems in physics, chemistry, biology and engineering. Famous problem classes of that sort are two-phase flow and free surface problems. 
In recent years, geometrically unfitted finite element methods (FEM) such as CutFEM \cite{cutFEM} have become very popular. In these methods, the geometry is described separately from the computational background mesh, which allow us to handle domains that may exhibit strong deformations or even topology changes. 
In the following we represent the physical domains, embedded and evolving smoothly in a time-independent background domain, implicitly by a level set function. 
While for PDEs on stationary domains, suitable unfitted finite element methods have been designed, analyzed, implemented and validated for a broad range of problems, the treatment of unfitted moving domains is less well-explored. 
A method-of-lines approach is not directly applicable as the domain of definition of the discrete solution and hence the corresponding unfitted finite element space changes between time instances. There are (at least) three approaches to solving this problem: 

  1. In \cite{hansbo2015characteristic} and very recently in \cite{MZZ_ARXIV_2021} a \emph{characteristic Galerkin} or \emph{semi-Lagrangian} formulation for a convection-diffusion problem on an evolving surface and an evolving bulk domain have been considered, respectively. Here, instead of discretizing the partial time derivative $\partial_t$, the material derivative is used and approximated by backtracking trajectories at required integration points. 

  2. A space-time reformulation of the problem with a moving domain 
  allows to transfer the main concepts of unfitted FEM from the case of stationary to that of time-dependent domains. Such an approach has been considered, e.g., in \cite{LR_SINUM_2013,L_SISC_2015} for scalar interface problems and recently extended to higher order in space and time in \cite{preussmaster,heimannmaster}. A variant that reduces the complexity that comes with a space-time formulation is the quadrature-in-time approach introduced in \cite{zahedi2017space} and applied in \cite{hansbo2016cut,frachon2019cut}.

  3. An alternative strategy stays in the framework of the usual method of lines. To this end, an extension is applied at every time step to make previous solutions well-defined in subsequent time steps. Without analysis and with the restriction of applying the extension to direct neighbors only, such a strategy has been considered in \cite{schottstabilized}. And in \cite{LO_ESAIM_2019}, for scalar problems, this approach has been generalized, studied more systematically and put on a mathematically rigorous foundation. Developments to unsteady Stokes problems on moving domains have been considered in \cite{burman2022eulerian,vWRL_ARXIV_2020}.

In this paper, we restrict ourselves to the third class of methods and extend \cite{LO_ESAIM_2019} with respect to two major limitations. First of all, in \cite{LO_ESAIM_2019} only an implicit Euler time discretization has been analyzed, while we upgrade this to the Backward Differentiation Formulas (BDF) for higher order of accuracy (with focus on BDF2 in the analysis). 
The second limitation of \cite{LO_ESAIM_2019} that we remove, is the abstract assumption of an arbitrarily accurate geometry handling. In \cite{LO_ESAIM_2019} it is assumed that the domain integrals on the implicitly -- through level set functions -- described geometries can be carried out robustly and arbitrarily accurate. In practice, however, this is hard to achieve. Existing  strategies for numerical integration on the cut geometries are typically either low-order accurate or fail to guarantee positive quadrature weights and hence stable quadrature. In \cite{lehrenfeld2015cmame} the concept of geometrically unfitted \emph{isoparametric} finite elements has been introduced as a remedy to combine guaranteed positive quadrature weights with arbitrarily accurate numerical integration on smooth domains. Afterwards, this approach has been successfully applied and analyzed to several \emph{stationary} problems in a.o. \cite{LR_IMAJNA_2018,lehrenfeld20162,LPWL_PAMM_2016,L_GUFEMA_2017,grande2018analysis}. 
To obtain a computational feasible but still higher-order accurate approach for the handling of the implicit geometry, we also consider the use of the isoparametric unfitted FEM in this work, but in the context of moving domain. 

\subsection*{Content and structure of the paper}
The major contribution in this work is the development of the method in \cite{LO_ESAIM_2019} to higher order of accuracy. This includes: 
\begin{itemize}
  \item Introduction of an arbitrarily high order in space and first to third order in time method for a scalar convection-diffusion equation on an evolving domain.
  \item Handling and estimates for mesh deformations changing in time in the isopa\-rametric unfitted FEM. These results are valuable not only for the considered BDF-based time stepping schemes, and play a major part of this work.
  \item A priori error analysis of the BDF2-based method that yields arbitrarily high order of accuracy in space and up to second-order convergence in time. 
  \item Numerical examples that confirms the predicted convergence rates and applications beyond the scope of the considered numerical analysis. 
\end{itemize}

This paper is organized as follows. 
In \Cref{sec:model} we introduce the PDE problem. In preparation of the definition of the method, in \Cref{sec:prelim} we gather notation and properties of the computational mesh, its time-dependent active parts, the isoparametric mesh transformation and a preliminary description of a transfer operator between different meshes. In \Cref{sec:method} the discretizations in space and time are given. The transfer operator is dicussed in more detail in \Cref{sec:projection} and several important results on the transfer operator are stated. Together with \Cref{sec:ana} where the a priori error analysis for the scheme is carried out, these two sections represent the most important pieces of this study. The main part of the paper concludes with \Cref{sec:experiments} that validates the theoretical findings and extends beyond them.
\extendedonly{This manuscript requires a significant mount of notation. To facilitate working with the many different symbols we make heavy use of the hyperlink capabilities of pdf documents.}

\section{Mathematical model}
\label{sec:model}

For ease of presentation we mainly consider the convection-diffusion equation posed on an evolving domain. The method, however, has been verified feasible for some more complicated models, such as two-phase interface problem tested in \Cref{sec:experiments} or unsteady incompressible flows in \cite{burman2022eulerian,vWRL_ARXIV_2020}, and with some restriction on the time step size also in \cite{schottstabilized}.

Let $\Om(t) \subset \R^d,\; d=2,3,~$ be a time-dependent domain with Lipschitz boundary $\Gamma := \partial \Om$ evolving in a time interval $t \in [0,\finaltime], ~\finaltime \in \R_+$, which \hypertarget{defB}{is embedded in a polygonal, time-independent background domain $\widetilde{\Omega}$}.
For instance, $\Om(t)$ may be regarded as a volume of fluid under motion and deformation, with a material velocity field \extendedonly{of particles }$\w: \B \to \R^d$ that has a proper meaning on the whole \extendedonly{background }domain $\B$. 
The conservation of a scalar quantity $u(\x,t)$ of the fluid with a diffusive flux is governed by
  \begin{equation} \label{eq:eqn}
	\partial_t u + \nabla \cdot (u \w - \nu \nabla u) = g \quad \text{in} \;\Om(t),\; \qquad \nabla u \cdot \n = 0 \quad \text{on} \;\G(t), ~~t \in (0,\finaltime], 
  \end{equation}
where $\nu > 0$ denotes the diffusion coefficient, $g$ is a source term
and $\n(\x,t)$ is the unit normal on $\G(t) = \partial \Om(t)$. 
Here, for the sake of simplicity we apply boundary conditions that ensure the global conservation of $u$ in $\Om(t)$. For the treatment of Dirichlet-type boundary conditions or interface conditions using Nitsche's method we refer to \cite{vWRL_ARXIV_2020} and the numerical examples. 
Further, we assume proper given initial conditions $u(\x,0) = u_0(\x)~\text{in}~\Om(0)$.

In order to describe the time-dependent domain $\Om(t)$, a level set function $\phi(\x,t): \B \times [0,\finaltime] \to \R$ is utilized such that the boundary of the domain is represented by the zero level and the domain is described by the negative levels, i.e. 
  \begin{equation}
  	\Om(t) = \{\x \in \B: \phi(\x,t) < 0\}, ~\text{s.t.}~
    \G(t) = \{\x \in \B: \phi(\x,t) = 0\}.
  \end{equation}
In addition, we define an $\epsilon$-neighborhood of the domain $\Om_\epsilon(t) := \{ \x \in \B: \phi(\x,t) < \epsilon \}$ for some $\epsilon > 0$ corresponding space-time domains
\begin{equation} \label{def:Q}
	Q := \bigcup_{t \in (0,\finaltime)} \Om(t) \times \{t\}, \qquad 
	Q_\epsilon := \bigcup_{t \in (0,\finaltime)} \Om_\epsilon(t) \times \{t\}, \qquad 
	\Q \subset \Qe \subset \R^{d+1}. 
\end{equation}
  
\section{Preliminaries for the discretization}
\label{sec:prelim}
For the problem discussed in \Cref{sec:model} we seek a proper discretization\extendedonly{~ using
 an \emph{isoparametric unfitted finite element method} based on \emph{volumetric discrete extensions} as in \cite{LO_ESAIM_2019}. 
Here, a backward differentiation formula (BDF) for time stepping is coupled with a ghost penalty extension  on top of an unfitted finite element discretization in space}. 
%
\extendedonly{

}
In this section we prepare notation, concepts, and assumptions for the definition of the method given in \Cref{sec:method}, esspecially w.r.t. the geometrical approximation, finite element spaces and active meshes. 
\extendedonly{
In \Cref{sec:fes and geom appr} we define notation w.r.t. time discretization and the geometrical approximation with using a parametric mesh transformation. 
The properties of the isoparametric mapping are then summarized in \Cref{sec:prop param1} and \Cref{sec:prop param2}. 
Further notation for active meshes and discrete neighborhoods is introduced in \Cref{sec:discrete neigh}. 
}

\subsection{Finite element spaces and geometrical approximation} \label{sec:fes and geom appr}
First of all, we introduce notation for the discrete time levels of the time stepping procedure. Let $\dt := \finaltime/N, ~N \in \N$ be the uniform time step of an equally-spaced subdivision of the time interval $(0,\finaltime]$ under investigation. 
Let $t_n := n \Delta t$ be a time instance, then we denote by quantities with upper index $n$ the corresponding quantity with restriction $t=t_n$, e.g. \hypertarget{def:Omn}{$\Om^n := \Om(t_n)$}, 
\hypertarget{def:Gn}{$\G^n := \G(t_n)$}, 
\hypertarget{def:phin}{$\phi^n := \phi(\cdot, t_n)$}, 
or 
\hypertarget{def:Omepsn}{$\Om_\epsilon^n := \Om_\epsilon(t_n)$}, $~n=0,...,N$.

\hypertarget{def:Th}{Let $\{\T_h\}$ be an admissible quasi-uniform family of simplicial triangulations with a diameter $h > 0$ on the background domain $\B$.}
On each of these triangulations $\T_h$ we define the time-independent, standard finite element space with polynomials of order $k$ as 
  \begin{equation} \label{def:Vh}
	\V_h = \V_h^{(k)} := \{v_h \in C(\B): v_h|_T \in \P_k(T), \forall T \in \Th\}.
  \end{equation}

\begin{remark}[Inequalities up to constants]
  In order to simplify the inequalities with generic constants $c$ that are independent of the mesh size $h$, time step $\dt$ and time $t_n$, in the following $x \lesssim y$ ($x \gtrsim y$) denotes $x \le c y$ ($x \ge c y$), and $x \simeq y$ indicates $x \lesssim y$ and $x \gtrsim y$. The hidden constant $c$ may be refered to by $c_{(\texttt{x}.\texttt{y})}$ where $(\texttt{x}.\texttt{y})$ is the label of the corresponding inequality using the $\lesssim$ or $\gtrsim$ notation. 
\end{remark}

In general, for $n=0,..,N$, only a good approximation $\phi_h^n \in \Vhbq{q},~q\geq 1$, e.g., a higher-order piecewise polynomial approximation to $\phi^n$, is given. 
We assume that $\partial \Omn{n}$ is sufficiently smooth and
  \begin{equation}
  	\operatorname{dist}\left(\partial \Omn{n}, \partial \Om_{\phi_h}^n\right) \lesssim ~ h^{q+1}, \qquad \forall t \in [0,\finaltime]
  \end{equation}
holds for \hypertarget{def:Omphi}{$\Om_{\phi_h}^n := \{ \x \in \B \mid \phi_h^n(\x) < 0 \}$}. 
A well-known issue with the implicit description of the level set functions is that realizations of quadrature rules that preserve the geometrical order of accuracy are difficult to achieve, cf. the discussion in \cite{lehrenfeld2015cmame}. 
In \extendedonly{the remainder of }this work we consider 
the \emph{isoparametric} approach introduced in \cite{lehrenfeld2015cmame} to tackle this problem. 
The underlying idea is that an only second-order approximation of $\Omphi{\phi_h}{n}$ based on the piecewise linear interpolation $\hat{\phi}_h^n$ of $\phi_h^n$ simplifies the realization of quadrature rules dramatically. 
This configuration then serves as a reference configuration on which quadrature rules can easily be constructed (e.g., by simple geometrical decomposition rules). 
\hypertarget{def:Omphih}{To improve the accuracy of this low-order approximation $\Om_{\hat{\phi}_h}^n$} an additional transformation $\Theta_n \in [\Vhbq{q}]^d$ is constructed at each time step $n$ such that
  \begin{equation} 
  	\operatorname{dist}\Big(\partial \Omn{n}, \Theta^n\big(\partial \Omphih{\hat{\phi}_h}{n}\big)\Big) \lesssim 
    \operatorname{dist}\Big(\partial \Omn{n}, \partial \Omphi{\phi_h}{n}\Big) + \operatorname{dist}\Big(\partial \Omphi{\phi_h}{n}, \Theta^n\big(\partial \Omphih{\hat{\phi}_h}{n}\big)\Big) \lesssim 
    h^{q+1}. 
  \end{equation} 
This transformation is itself a finite element function w.r.t. the (undeformed) background mesh which renders the task of accurate numerical integration feasible. 
The deformation is \emph{local}, i.e., only in the vicinity of cut elements it deviates from the identity, and \emph{small} everywhere in the sense that $\| \Theta^n \|_\infty \lesssim h^2$ (in detail \Cref{sec:prop param1}). 
However, the fact that the deformed meshes and the properly adapted finite element spaces are in general time-dependent, results in several technicalities\extendedonly{ as we will see}. 
Based on this configuration we define the high-order approximations of geometry, the deformed meshes and the time-dependent finite element spaces as (with $\Theta^{-n}:=(\Theta^n)^{-1}$)
  \begin{equation} \label{def:Vhn}
	\Om_h^n := \Theta^n\big(\Omphih{\hat{\phi}_h}{n}\big), \qquad
	\G_h^n := \partial \Om_h^n, \qquad
	\T_h^n := \Theta^n(\Th), \qquad
	\Vhn{n} := \Vh \circ \Theta^{-n}.
  \end{equation}
\new
{
We note that the mesh deformations from different time steps do not accumulate and remain \emph{local} and \emph{small}. This is in contrast to most body-fitted methods where the deformations (from the initial domain) accumulate until a remeshing takes place.
}

\subsection{Discrete neighborhoods and active meshes}\hypertarget{sec-discrete-neigh}{}\label{sec:discrete neigh}
As usual in unfitted finite element methods, only a part of the background mesh is involved in the computation at each time step. 
We therefore define \emph{active} parts of meshes and finite element spaces as those parts corresponding to the elements that overlap the physical domain $\Om(t)$ or its \emph{discrete} $\d$-neighborhood. We refer to \Cref{fig:domains} for a sketch of the different domains and meshes introduced next. 
\begin{figure}
  \begin{center}
  \scalebox{0.68}{

\definecolor{myblue}{HTML}{4383BD}%
\definecolor{myred}{HTML}{a11006}%
\definecolor{myblueb}{HTML}{1414f8}%
\definecolor{mypetrol}{HTML}{00c5bd}%
\definecolor{myyellow}{HTML}{c8c55e}%
\definecolor{mygreen}{HTML}{00c54c}%
\definecolor{mygray}{HTML}{c8c5bd}%
\definecolor{mypurple}{HTML}{c883bd}%
\definecolor{myviolet}{HTML}{5500ff}%

\begin{tabular}{@{}|@{}l@{}l@{}|@{}l@{}l@{}|@{}l@{}l@{}||@{}l@{}l@{}|@{}l@{}l@{}|@{}l@{}l@{}||@{}l@{}l@{}|@{}l@{}l@{}|@{}l@{}l@{}|@{}}
  \hline
    \multicolumn{2}{@{}|@{}c@{}|@{}}{\includegraphics[width=2cm,trim=  4.00cm 8cm 93.00cm 7.4cm,clip=true]{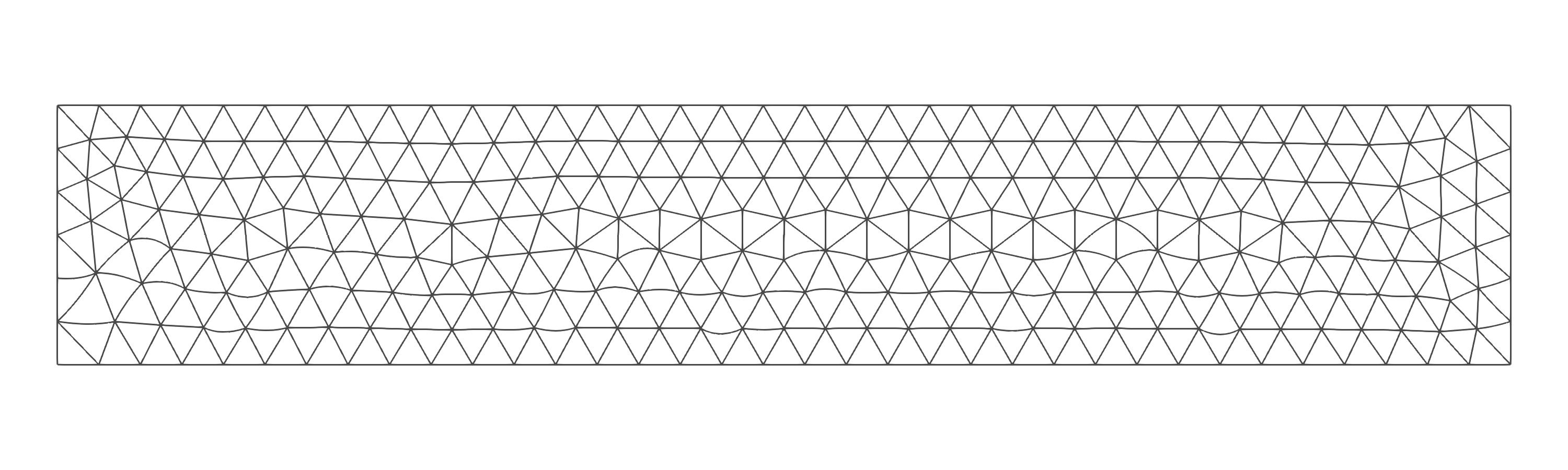}}
  & \multicolumn{2}    {@{}c@{}|@{}}{\includegraphics[width=2cm,trim= 12.86cm 8cm 84.14cm 7.4cm,clip=true]{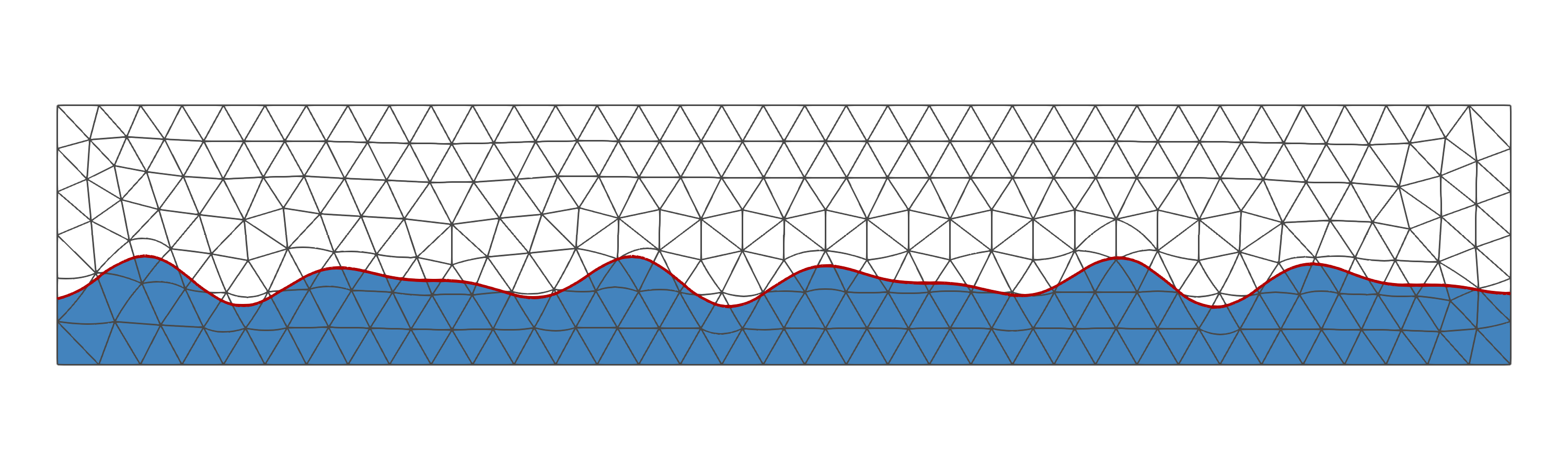}}
  & \multicolumn{2}    {@{}c@{}||@{}}{\includegraphics[width=2cm,trim= 21.72cm 8cm 75.28cm 7.4cm,clip=true]{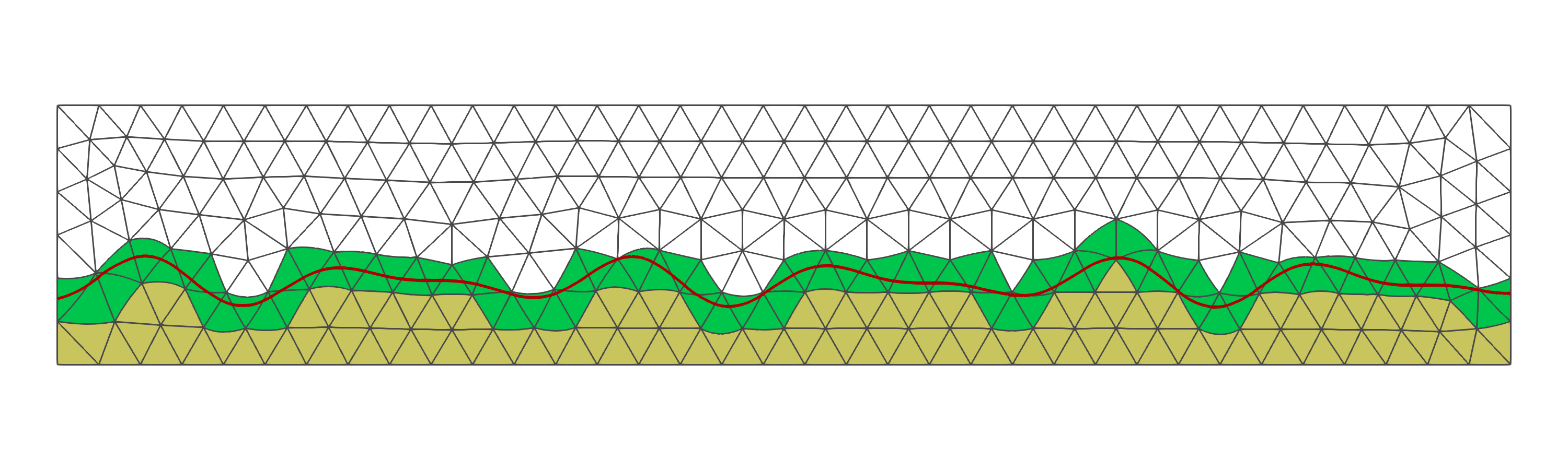}}
  & \multicolumn{2}    {@{}c@{}|@{}}{\includegraphics[width=2cm,trim= 30.58cm 8cm 66.42cm 7.4cm,clip=true]{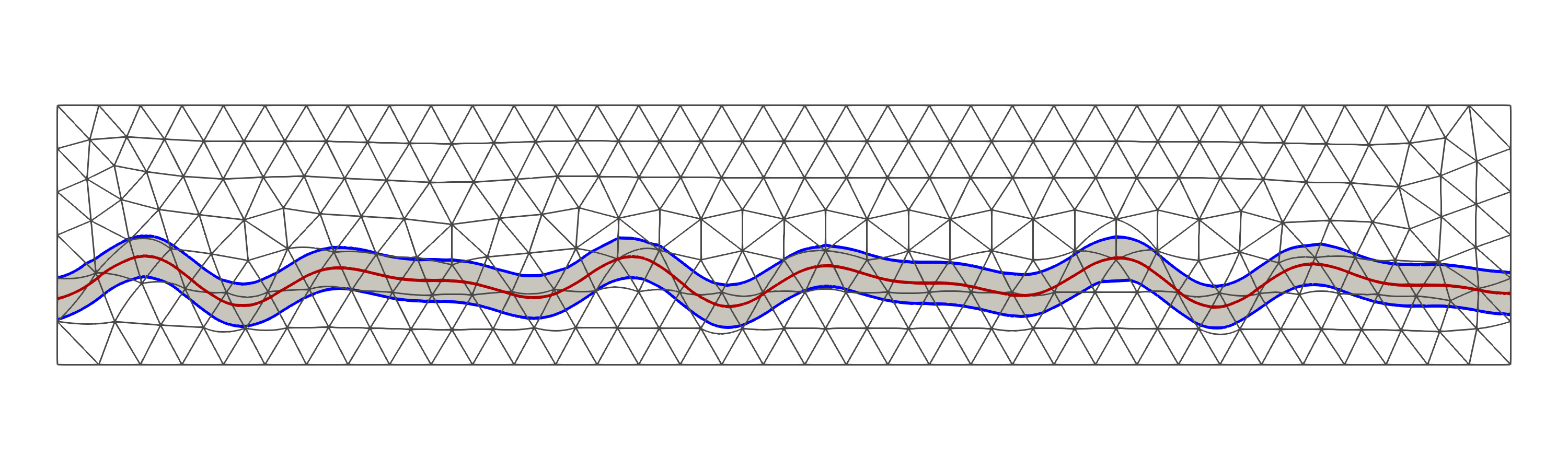}}
  & \multicolumn{2}    {@{}c@{}|@{}}{\includegraphics[width=2cm,trim= 39.44cm 8cm 57.56cm 7.4cm,clip=true]{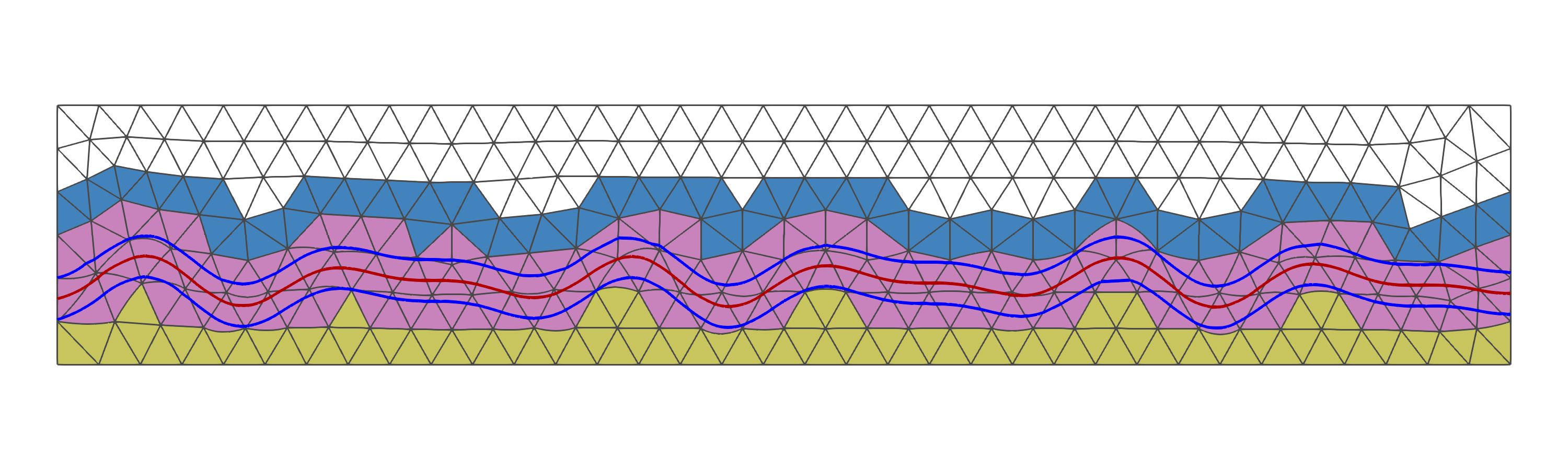}}
  & \multicolumn{2}    {@{}c@{}||@{}}{\includegraphics[width=2cm,trim= 48.30cm 8cm 48.70cm 7.4cm,clip=true]{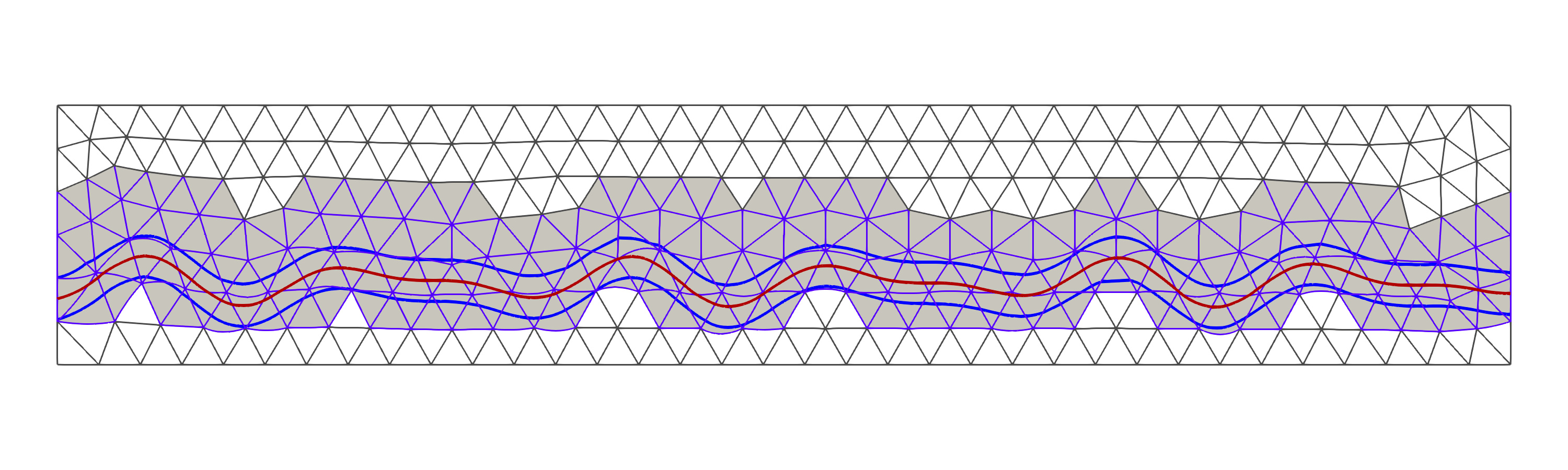}}
  & \multicolumn{2}    {@{}c@{}|@{}}{\includegraphics[width=2cm,trim= 57.16cm 8cm 39.84cm 7.4cm,clip=true]{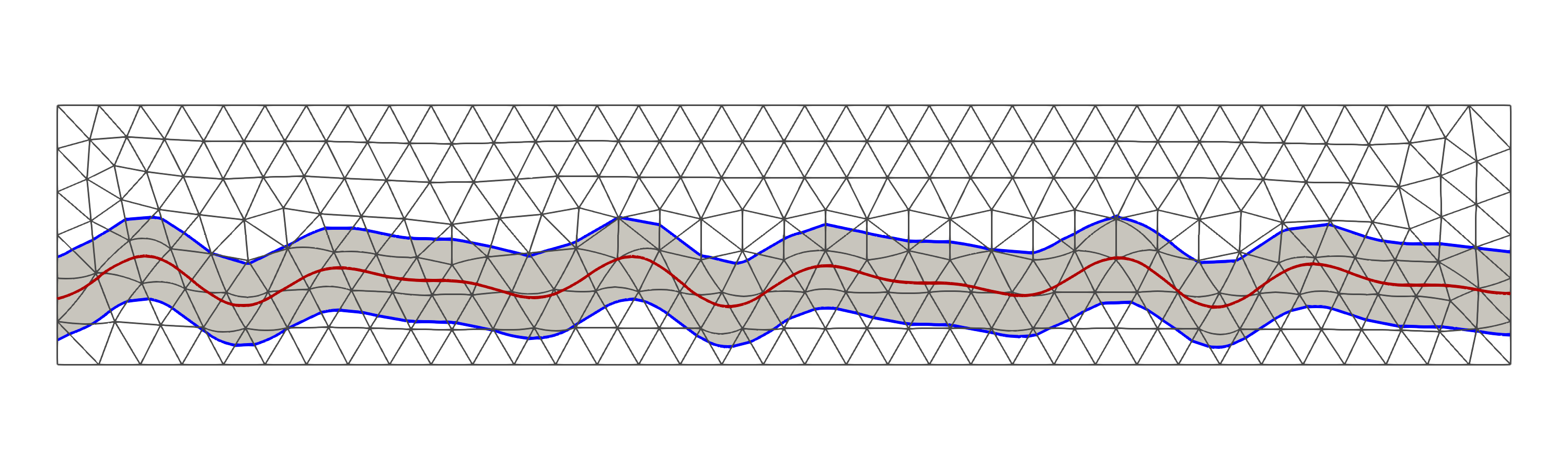}}
  & \multicolumn{2}    {@{}c@{}|@{}}{\includegraphics[width=2cm,trim= 66.02cm 8cm 30.98cm 7.4cm,clip=true]{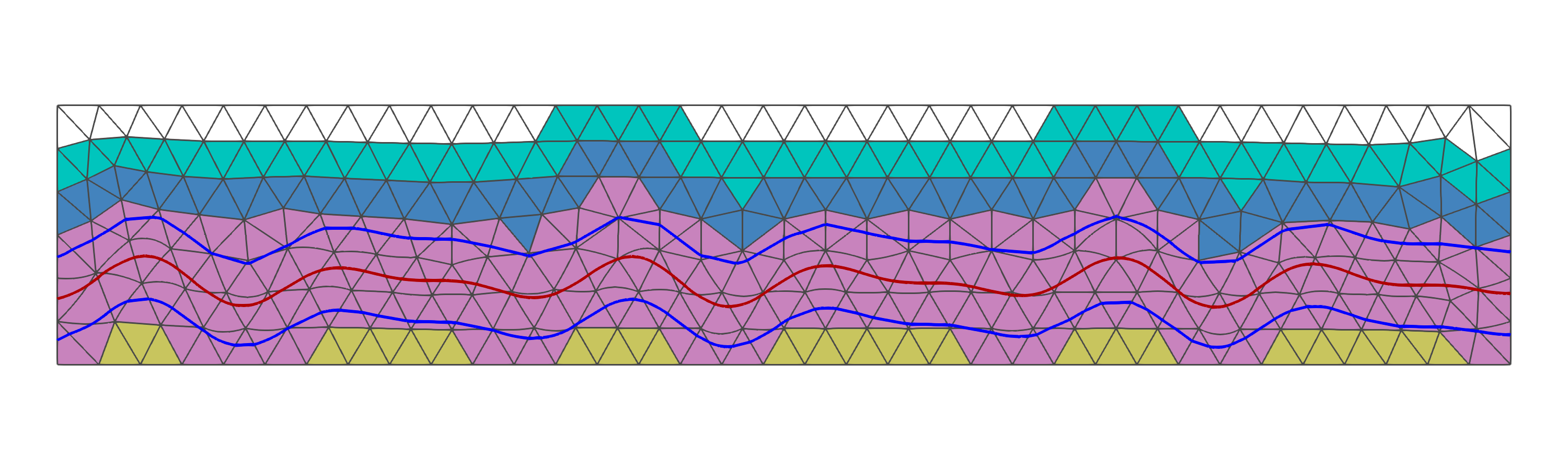}}
  & \multicolumn{2}    {@{}c@{}|@{}}{\includegraphics[width=2cm,trim= 74.88cm 8cm 22.12cm 7.4cm,clip=true]{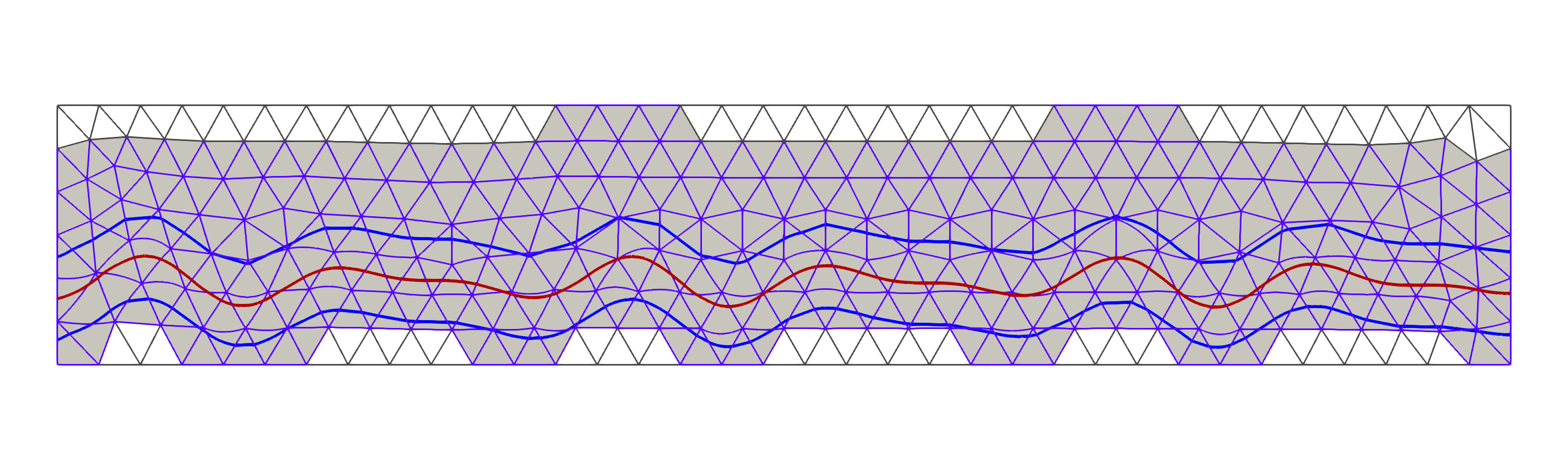}}
  \\[-0.12cm]
  \hline ~ \\[-2.5ex]
    \hspace*{0.14cm}\color{black}- \hspace*{-0.05cm}& $\!:\! \Thn{n}$
  &
    \hspace*{0.14cm}\color{myred}\textbf{-} \hspace*{-0.05cm}& $\!:\!\partial \Omhn{n}$
  &
    $\,\bigcup_{\!{\color{mygreen}\blacktriangle}}$
  &
    $\!:\! \TdnS{0}{n}$
  &
    \hspace*{0.14cm}\color{myblueb}\textbf{-} \hspace*{-0.05cm}& $\!:\!\partial \Sdn{\delta}{n}$
  &
    $\,\bigcup_{\!{\color{mypurple}\blacktriangle}}$
  &
    $\!:\! \TdnS{\delta}{n}$
  &
    $\,\bigcup_{\!{\color{mygray}\blacktriangle}}$
 & $\!:\! \TdnS{\delta,+}{n} $
  &
    \hspace*{0.14cm}\color{myblueb}\textbf{-} \hspace*{-0.05cm}& $\!:\!\partial \Sdn{2\delta}{n}$
  &
    $\,\bigcup_{\!{\color{mypurple}\blacktriangle}}$
  &
    $\!:\!\TdnS{2\delta}{n}$
  &
    $\,\bigcup_{\!{\color{mygray}\blacktriangle}}$
 & $\!:\!\! \TdnS{2\delta,2+}{n}$
  \\
     & 
  &
    ~\color{myblue}$\bullet$ \hspace*{-0.05cm} & $\!:\! \Omhn{n}$
  &
    $\,\bigcup_{\!{\color{mygreen}\blacktriangle}\!\!{\color{myyellow}\blacktriangledown}}$
  &
    $\!:\! \Tdn{0}{n}$
  &
    ~$\color{mygray}\bullet$~ & $\!:\! \Sdn{\delta}{n}$
  &
    $\,\bigcup_{\!{\color{mypurple}\blacktriangle}\!\!{\color{myyellow}\blacktriangledown}}$
  &
    $\!:\! \Tdn{\delta}{n}$
  &
    \hspace*{0.16cm}\color{myviolet}{$\mid$} \hspace*{-0.05cm}& $\!:\!\Frn{1}{n}$
  &
    ~$\color{mygray}\bullet$~ & $\!:\! \Sdn{2\delta}{n}$
  &
    $\,\bigcup_{\!{\color{mypurple}\blacktriangle}\!\!{\color{myyellow}\blacktriangledown}}$
  &
    $\!:\!\! \Tdn{2\delta}{n}$
  &
    \hspace*{0.16cm}\color{myviolet}{$\mid$} \hspace*{-0.05cm} & $\!:\!\Frn{2}{n}$
  \\
     & 
  &
     & 
  &
     & 
  &
     & 
  &
    $\,\bigcup_{\!{\color{mypurple}\blacktriangle}\!\!{\color{myyellow}\blacktriangledown}\!\!{\color{myblue}\blacktriangle}}$
  &
    $\!:\! \Tdn{\delta,+}{n}$
  &
     & 
  &
     & 
  &
    $\,\bigcup_{\!{\color{mypurple}\blacktriangle}\!\!{\color{myyellow}\blacktriangledown}\!\!{\color{myblue}\blacktriangle}}$
  &
    $\!:\!\!\Tdn{2\delta,+}{n}$
  &
     &  
  \\
     & 
  &
    &
  &
     & 
  &
     & 
  &
     & 
  &
     & 
  &
     & 
  &
    $\,\bigcup_{\!{\color{mypurple}\blacktriangle}\!\!{\color{myyellow}\blacktriangledown}\!\!{\color{myblue}\blacktriangle}\!\!{\color{mypetrol}\blacktriangledown}}$
  &
    $\!:\!\! \Tdn{2\delta,2+}{n}$
  &
     &  
  \\
  \hline
\end{tabular}

  }
  \end{center}\vspace*{-0.2cm}
  \caption{Sketch of discrete domains and different selections of elements and facets. The first three columns display the mesh, the discrete domain $\Omhn{n}$ and the set of interior and cut elements. The three columns in the center display a strip domain related to an extension by $\d$ and a corresponding element and facet selection while in the last three columns an extension by $2\d$ is considered.}
  \label{fig:domains}
  \end{figure}
First, let us define a \emph{discrete} $\d$-strip for some $\d \in \R_{+}$, 
\begin{equation}\label{def:Sdn}
\S_{\d}^n := \Theta^n \Big(\Omphih{\hat{\phi}_h-\d}{n} \setminus \Omphih{\hat{\phi}_h+\d}{n}\Big)
\end{equation}
and the corresponding part of the set of elements and the corresponding domain
\begin{equation} \label{def:TdnSOdnS}
\T_{\d}^{n,\S} := \{ T \in {\T}_{h}^n \mid \operatorname{meas}_d (T \cap \Sdn{\d}{n} ) > 0 \}, \quad \mathcal{O}_{\d}^{n,\S} := \{\x \in \overline{T}, T \in {\T}_{\d}^{n,\S}\}.
\end{equation}
Note that $\T_{0}^{n,\S}$ and $\mathcal{O}_{0}^{n,\S}$ denote the set of all \emph{cut elements} and the corresponding domain, respectively, i.e., the elements that are cut by the discrete boundary $\partial \Om_h$.
For the discrete extension of the domain that includes the domain interior, 
the \emph{active} part of the mesh and its domain we have
\begin{equation} \label{def:OmdnTdnOdn}
  \Om_{\d}^n \!:= \Theta^n \!\big(\Omphih{\hat{\phi}_h-\d}{n}\big)\!,
  {\T}_{\d}^n \!:= \{ T \in {\T}_{h}^n\!\mid\!\operatorname{meas}_d(T \cap \Om_{\d}^n)\!>\!0\}, 
\mathcal{O}_{\d}^n := \!\{\x \!\in \overline{T}, T \in {\T}_{\d}^n \}.
\end{equation}
Corresponding to $\Tdn{\d}{n}$ we define the time-dependent finite element spaces on the \emph{active meshes} as continuous, piecewise mapped polynomials of degree $k$:
\begin{equation}\label{def:Vdn}
\V_{\d}^n := \Vhn{n} |_{\Odn{\d}{n}} = \{v_h \in C(\Odn{\d}{n}): v_h|_T \in \P_k(T)\circ \Theta^{-n}, \forall T \in {\T}_{\d}^n\}. 
\end{equation}
We furthermore add a subscript ``$+$'' to expand a set of elements or domain by all neighboring elements\footnote{An element is considered a neighbor if both share a vertex}, e.g., the neighboring elements in addition to the cut elements are denoted by $\T_{0,+}^{n}$. This extension can also be stacked $r$ times\footnote{with $r$ a small integer}, e.g., $\T_{\d,2+}^{n}:=\T_{\d,++}^{n}$ and $\O_{\d,3+}^{n}:=\O_{\d,+++}^{n}$. Obviously, there holds
\begin{equation}
\operatorname{dist}\left(\O_{r\d,r+}^n,\Orn{0}{n}\right) \gtrsim r(\d + h). 
\end{equation}
For notational simplicity, the abbreviations \hypertarget{def:Trn}{$\T^n_{r} := \T^n_{r\d,r+}$}, 
\hypertarget{def:TrnS}{$\T^{n,\S}_{r} := \T^{n,\S}_{r\d,r+}$}, 
\hypertarget{def:Orn}{$\O^n_{r} := \O^n_{r\d,r+}$}, 
\hypertarget{def:OrnS}{$\O^{n,\S}_{r} := \O^{n,\S}_{r\d,r+}$}, 
and 
\hypertarget{def:Vrn}{$\V^n_{r} := \V^n_{r\d,r+}$ for $r \in \N$} will be frequently used below. 
We note that the introduced notation implies the following identities: 
\begin{align*}
  \Omhn{n}=\Omdn{\d}{n}\big|_{\d=0}&=\Om_{0}^n, & 
  \T_{\d}^n\big|_{\d=0}=\T_{r}^n\big|_{r=0}&=\T_{0}^n, & 
    \O_{\d}^n\big|_{\d=0}=\O_{r}^n\big|_{r=0}&=\O_{0}^n 
  \\
  \Thn{n}=\T_{\d}^n\big|_{\d\to\infty}&=\T_{\infty}^n, &
  \Vhn{n}=\V_{\d}^n\big|_{\d\to\infty}&=\V_{\infty}^n
\end{align*} 
Next, from $\F_h^n$, the set of all facets in the mesh $\Thn{n}$, we introduce a set of \emph{active facets} that is later on used for stabilization and extension purposes. To this end, we mark all facets between elements in the $r \d$ strip and the interior:
\begin{equation} \label{eq:facetset}
\F_{r}^n := \{\overline{T}_1 \cap \overline{T}_2 \mid T_1 \in \T_{r}^n, T_2 \in \T_{r}^{n,\S}, T_1 \neq T_2 \}. 
\end{equation}
Note that this selection of facets connects the domain interior of $\Omhn{n}$ with $\O_r^n$, i.e., the region obtained by applying an extension by $r \d$ plus $r$ additional element layers. 

We further introduce a \emph{patch} $\omega(\cdot): \B \cup \F_h^n \cup \Thn{n} \to \Thn{n}$ that maps a point, a facet, or an element to a set of neighboring elements
\begin{subequations} 
	\begin{align}
		\omega(x) &:= \{\cup_{T \in \Thn{n}} T, ~ x \in \overline{T} \}  &&\text{for a point} ~x \in \B, \label{eq:nodepatch} \\
		\omega(F) &:= \{\cup_{T \in \Thn{n}} T, ~ F \subset \overline{T} \}  &&\text{for a facet} ~F \in \F_h^n, \label{eq:facetpatch} \\
		\omega(T) &:= \{\cup_{T' \in \Thn{n}} T' \mid \overline{T} \cap \overline{T}' \ne \emptyset \}  &&\text{for an element} ~T \in \Thn{n}. \label{eq:elementpatch}
	\end{align}
\end{subequations}
\hypertarget{def:hom}{Similarly we use the notation $\hat{\omega}(\cdot)$ for patches on the undeformed mesh $\Th$ where the neighboring elements are picked correspondingly from $\Th$.}

We conclude this subsection with the following definition: 
\begin{definition}[Trivial finite element extension]
We identify discrete functions on restricted meshes with functions on the whole mesh by setting all degrees of freedom outside the restriction to zero, s.t. there holds for instance $\V_{\d}^n \subset \V_{h}^n$ for any $\d \geq 0$.
\end{definition}

\subsection{Properties of the parametric mapping as a function in space} \label{sec:prop param1}
Let $\phi(\x,t)$ be a given function which is smooth in space and Lipschitz-continuous in time at least in the vicinity of its zero level.
The time-dependent mappings $\Theta^n: \B \to \B$ are constructed for each $n=0,..,N$ based on the strategies for stationary domains described in \cite{lehrenfeld2015cmame,LR_IMAJNA_2018}.
We only summarize the most important features.
The mapping acts mainly on \emph{cut elements}, i.e. on $\OrnS{0}{n}$, where the construction ensures that the image of the zero level of $\hat{\phi}_h^n$ under the mapping is (in a higher-order sense) close to the zero level of $\phi_h^n$.
Because the piecewise linearized level set function $\hat{\phi}_h^n$ is already exact on vertices and second order accurate elsewhere, the mapping is the identity on vertices and $\O(h^2)$ small on cut elements.
On elements neighboring to cut elements, $\OrnS{0,+}{n} \setminus \OrnS{0}{n}$, a transition to the identity is realized so that overall the mapping $\Theta^n$ is small and local.
\hypertarget{def:Psi}{By $\Psi^n$ an \emph{ideal} mapping is denoted that maps the zero level of $\hat{\phi}_h^n$ onto $\Gamma^n$ exactly.}
As $\Theta^n$ the ideal mapping $\Psi^n$ only deviates from the identity in $\OrnS{0,+}{n}$.
\extendedonly{Note that although not reflected in the notation $\Theta^n$ and $\Psi^n$ depend on the triangulation $\Th$.} 

We summarize the accuracy of the mapping in the following lemma: 
\begin{lemma} \label{propertiesdh}
Let $n \in \{0,..,N\}$ be fixed and $\OrnS{0,+}{n}$ be the domain of cut elements and direct neighbors. For $h$ sufficiently small, there holds
\begin{subequations}
  \begin{align}
    & \Theta^n(\x)  =\x, \quad \text{for \ $\x=\x_V$ \text{\rm vertex in} $\Th$ or $\x \in \B\setminus \OrnS{0,+}{n}$}, \label{resd3} \\
    & \| \Theta^n - \id \|_{\infty,\B} \lesssim h^2, \quad ~\| D \Theta^n(\x) - \operatorname{I}\|_{\infty,\B} \lesssim h, \label{resd4} \\
    & \| \Theta^n - \Psin{n} \|_{\infty,\B} + h \| D (\Theta^n - \Psin{n}) \|_{\infty,\B} \lesssim h^{q+1}, \\
    & \text{where the latter implies} ~\operatorname{dist}(\partial \Om(t),\partial \Om_h(t)) \lesssim h^{q+1}. 
  \end{align}
\end{subequations}
\end{lemma}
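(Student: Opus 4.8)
The statement collects standard facts about the isoparametric mesh transformation, now made uniform in the time index $n$; the route is to reduce everything to the single-time-slice construction of \cite{lehrenfeld2015cmame,LR_IMAJNA_2018} and then track the dependence of the generic constants on $t_n$. First I would make explicit the construction: $\Theta^n = \Psi_h^n \circ \mathcal{E}_h^n$ (or whichever blending is used), where an approximate level-set map is built on cut elements from $\hat\phi_h^n$ and $\phi_h^n$, then extended to direct neighbors by a discrete blending (partition-of-unity / Scott--Zhang-type) to the identity, and is the identity elsewhere. Property \eqref{resd3} is then immediate from this definition: the construction fixes vertices (because $\hat\phi_h^n$ agrees with $\phi_h^n$ there to the required order, so the local mapping correction vanishes at vertices) and the blending is supported in $\OrnS{0,+}{n}$ by design.

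Next I would establish \eqref{resd4}. The bound $\|\Theta^n-\id\|_{\infty,\B}\lesssim h^2$ follows because the mapping is nontrivial only where $\hat\phi_h^n$ deviates from $\phi_h^n$, i.e. on cut elements; there the interpolation error $\hat\phi_h^n - \phi_h^n$ is $\O(h^2)$ (piecewise-linear interpolation of a smooth-enough function, using that $\phi$ is smooth in space near its zero level and $q\ge 1$), and the mapping correction depends Lipschitz-continuously on this quantity with a constant controlled by $|\nabla\phi|$ being bounded below near the zero level. The derivative bound $\|D\Theta^n-\operatorname{I}\|_{\infty,\B}\lesssim h$ comes from differentiating the blending: each correction term carries an extra factor $h^{-1}$ from $\nabla$ hitting a finite element function, and the $h^2$ becomes $h$ after an inverse inequality on the (shape-regular, quasi-uniform) mesh $\Th$. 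The third estimate compares $\Theta^n$ to the ideal map $\Psi^n$: by the triangle inequality $\Theta^n-\Psi^n = (\Theta^n - \text{(exact blend)}) + (\text{(exact blend)} - \Psi^n)$, and each difference is controlled by the geometry-approximation error, which is $\O(h^{q+1})$ by the assumed accuracy $\dist(\partial\Omn{n},\partial\Omphi{\phi_h}{n})\lesssim h^{q+1}$ together with the $h^{q+1}$-accuracy of the higher-order level-set correction built into $\Theta^n$; the derivative part uses the same inverse-inequality trick, losing one power of $h$. Finally, the $\dist$ statement follows since $\Psi^n$ maps $\partial\Omphih{\hat\phi_h}{n}$ exactly onto $\G^n$, so $\partial\Om_h^n=\Theta^n(\partial\Omphih{\hat\phi_h}{n})$ lies within $\|\Theta^n-\Psi^n\|_{\infty,\B}\lesssim h^{q+1}$ of $\G^n$.

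The one genuinely new point compared to the stationary references, and the part I expect to be the main obstacle, is \textbf{uniformity of all hidden constants in $n$} (equivalently, in $t_n$). Every bound above is of the classical form ``constant times $h$-power'', but a priori the constant depends on $\phi^n$ through quantities like $\|\phi\|_{C^{k+1}}$ on a neighborhood of $\G^n$ and a lower bound on $|\nabla\phi|$ there. The fix is to invoke the standing regularity hypotheses: $\phi$ is smooth in space uniformly on $[0,\finaltime]$ and Lipschitz in time near its zero level, so these geometric quantities are bounded above and below uniformly in $t$; moreover the active region $\OrnS{0,+}{n}$ has uniformly bounded overlap and shape-regularity inherited from the fixed background mesh $\Th$, so the combinatorial constants in the blending do not depend on $n$ either. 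One must also check that ``$h$ sufficiently small'' can be chosen once for all $n$ — again a consequence of the uniform-in-$t$ control of the geometry. I would spell this reduction out carefully, then simply cite \cite{lehrenfeld2015cmame,LR_IMAJNA_2018} for the per-slice estimates, which now hold with $n$-independent constants.
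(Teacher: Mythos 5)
Your proposal is correct and follows essentially the same route as the paper, which proves this lemma simply by citing Lemmas 3.4, 3.6 and 3.7 of the stationary-case reference \cite{LR_IMAJNA_2018}; your reconstruction of the per-slice arguments (locality of the blending, second-order accuracy of $\hat\phi_h^n$, inverse inequalities for the derivative bounds, comparison with the ideal map $\Psi^n$) matches what those lemmas contain. The one point you add — checking that the hidden constants and the threshold for ``$h$ sufficiently small'' are uniform in $n$ via the uniform-in-time regularity of $\phi$ near its zero level — is left implicit in the paper and is a worthwhile clarification rather than a deviation.
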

\begin{proof} See \cite[Lemmas 3.4, 3.6 and 3.7]{LR_IMAJNA_2018}. \end{proof}

\begin{figure}
  \begin{center}
    \includegraphics[trim=0.2cm 0.0cm 0.2cm 0.0cm, clip=true, width=0.5\textwidth]{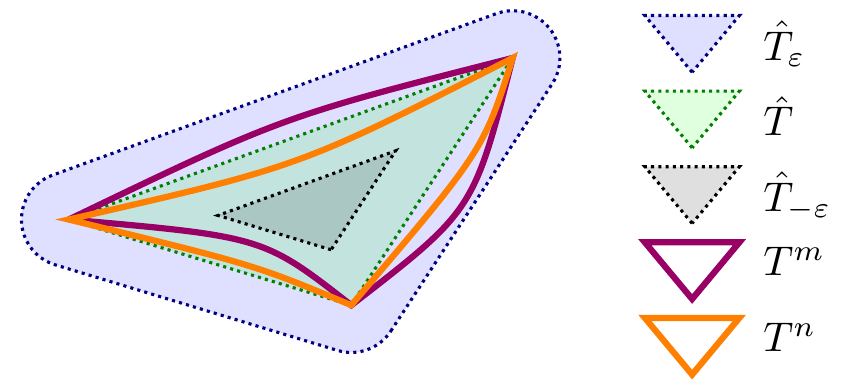}\vspace*{-0.3cm} 
  \end{center}
  \caption{Sketch of domains involved in \Cref{lemma:dmax}.}
  \label{fig:teps}
\end{figure}

Next, we characterize inclusion relations between inflated and deflated elements: 
\begin{lemma} \label{lemma:dmax}
  Let $\hat{T} \in \Th$, we define the (slightly) inflated version $\hat{T}_{\eps} := \{ x \in \B \mid \operatorname{dist}(x,\partial \hat{T}) \leq \eps \}$%
\footnote{Note that $\eps$ is not to be confused with $\epsilon$ introduced in \Cref{sec:model} for the domain extension.} 
  and the (slightly) deflated version $\hat{T}_{-\eps} := \{ x \in \hat{T} \mid \operatorname{dist}(x,\partial \hat{T}) \ge \eps \}$ for some $\eps > 0$. Further, for $m=0,..,N$, let $\Theta_{T}^{m*}: \hat{T}_{\eps} \to \R^d$ denotes the canonical extension of the polynomial function $\Theta^{m}|_{\hat{T}}$ to $\hat{T}_{\eps}$.
	For $h$ sufficiently small there is $c_{L\ref{lemma:dmax}} > 0$ (independent of $h$, $\hat{T}$ and $m,~n$) such that with $\eps = c_{L\ref{lemma:dmax}} h^2$ the following inclusion properties hold for $m,n=0,\dots,N$ and $T^m_{\pm \eps} := \Theta_{T}^{m*}(\hat{T}_{\pm \eps})$, cf. \Cref{fig:teps}
	\begin{align}
	T^n \subset \hat{T}_{\frac{\eps}{2}} \subset T^m_{\eps},~~ \hat{T}_{-\eps} \subset T^n \cap T^m \text{ with } \meas_d\left( (T^n \cup T^m) \setminus \hat{T}_{-\eps} \right) \lesssim h^{d+1}.
	\end{align}
\end{lemma}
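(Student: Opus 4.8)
The plan is to deduce everything from the quantitative control on $\Theta^m$ provided by \Cref{propertiesdh}, specialized element-by-element. Fix $\hat T\in\Th$ and write $F_m:=\Theta_T^{m*}\colon\hat T_\eps\to\R^d$ for the polynomial (extended) map, so that $T^m_{\pm\eps}=F_m(\hat T_{\pm\eps})$ and $T^m=F_m(\hat T)$. By \eqref{resd4} we have $\|\Theta^m-\id\|_{\infty}\lesssim h^2$ and $\|D\Theta^m-\mathrm I\|_\infty\lesssim h$ on $\B$; since $F_m$ is a polynomial of fixed degree on a simplex of diameter $\simeq h$, an inverse (Markov) inequality lets us transfer these $\infty$-bounds from $\hat T$ to the slightly inflated $\hat T_\eps$ (with $\eps=c h^2$ small compared to $h$) at the cost of a harmless constant, so in fact $\|F_m-\id\|_{\infty,\hat T_\eps}\le c_1 h^2$ and $\|DF_m-\mathrm I\|_{\infty,\hat T_\eps}\le c_2 h$ with $c_1,c_2$ independent of $h,\hat T,m$. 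I would fix $c_{L\ref{lemma:dmax}}>4c_1$ (say), so that all displacement errors are comfortably dominated by $\eps/4$.

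Next I would establish the three inclusions as essentially one-line consequences of the displacement bound, using that for any point $x$, $\dist(F_m(x),x)\le c_1h^2\le\tfrac\eps4$. For $T^n\subset\hat T_{\eps/2}$: take $x\in\hat T$; then $F_n(x)$ is within $\tfrac\eps4$ of $x\in\hat T$, hence within $\tfrac\eps4<\tfrac\eps2$ of $\hat T$, i.e. $F_n(x)\in\hat T_{\eps/2}$. (Here $\hat T_{\eps/2}=\{x:\dist(x,\partial\hat T)\le\eps/2\}$ is really the $\eps/2$-neighborhood of the closed simplex, which is what the nonstrict inequality and the picture indicate.) For $\hat T_{\eps/2}\subset T^m_\eps$: given $y\in\hat T_{\eps/2}$ I must produce $x\in\hat T_\eps$ with $F_m(x)=y$; since $\|DF_m-\mathrm I\|\lesssim h$ is small, $F_m$ is a bi-Lipschitz homeomorphism on $\hat T_\eps$ with $F_m^{-1}$ also displacing points by at most $\lesssim h^2\le\tfrac\eps4$, so $x:=F_m^{-1}(y)$ lies within $\tfrac\eps4$ of $y\in\hat T_{\eps/2}$, hence within $\tfrac34\eps<\eps$ of $\hat T$, so $x\in\hat T_\eps$ and $y\in F_m(\hat T_\eps)=T^m_\eps$; chaining gives $T^n\subset\hat T_{\eps/2}\subset T^m_\eps$. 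For $\hat T_{-\eps}\subset T^n\cap T^m$: given $z\in\hat T_{-\eps}$, i.e. $\dist(z,\partial\hat T)\ge\eps$, the point $x:=F_m^{-1}(z)$ satisfies $\dist(x,z)\le\tfrac\eps4$, so $\dist(x,\partial\hat T)\ge\tfrac34\eps>0$, whence $x\in\hat T$ and $z=F_m(x)\in T^m$; same for $n$, giving $z\in T^n\cap T^m$.

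It remains to bound the measure of $(T^n\cup T^m)\setminus\hat T_{-\eps}$. By the first inclusion $T^n,T^m\subset\hat T_{\eps/2}$, so it suffices to estimate $\meas_d(\hat T_{\eps/2}\setminus\hat T_{-\eps})$. This is a tubular shell around $\partial\hat T$ of width $\O(\eps)=\O(h^2)$; since $\partial\hat T$ has $(d-1)$-measure $\simeq h^{d-1}$ (it is the boundary of a simplex of diameter $\simeq h$, with the shape-regularity from the quasi-uniform family $\{\Th\}$), the standard estimate for the volume of an $\eps$-collar gives $\meas_d(\hat T_{\eps/2}\setminus\hat T_{-\eps})\lesssim h^{d-1}\cdot\eps\simeq h^{d-1}\cdot h^2=h^{d+1}$, which is the claimed bound (the collar formula is elementary for a convex polytope: the symmetric difference of the inner and outer parallel bodies at distance $\O(\eps)$ has volume bounded by perimeter times width plus lower-order corner terms that are even smaller). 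This completes the plan.

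The main obstacle, and the only place needing real care, is the transfer of the derivative bound $\|D\Theta^m-\mathrm I\|_\infty\lesssim h$ from $\hat T$ to the enlarged set $\hat T_\eps$ and the attendant claim that $F_m$ is a global bi-Lipschitz homeomorphism of $\hat T_\eps$ with controlled inverse displacement: one must check that the inverse-estimate constant does not degrade and that, having chosen $\eps=c_{L\ref{lemma:dmax}}h^2$, the map $F_m$ stays invertible on all of $\hat T_\eps$ (not merely on $\hat T$) — this is where the ordering $\eps\ll h$ and the smallness $\|DF_m-\mathrm I\|\lesssim h<\tfrac12$ for $h$ small enough are used decisively, together with $h$ being sufficiently small so that $c_{L\ref{lemma:dmax}}h^2$ is itself small. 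Everything else is bookkeeping with the triangle inequality and the elementary collar-volume estimate.
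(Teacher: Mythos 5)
Your proposal is correct and follows essentially the same route as the paper's proof: transfer the bounds $\| \Theta_{T}^{m*} - \operatorname{id} \|_{\infty,\hat{T}_\eps} \lesssim h^2$ and $\| D \Theta_{T}^{m*} - I \|_{\infty,\hat{T}_\eps} \lesssim h$ from $\hat{T}$ to $\hat{T}_\eps$ by polynomial norm equivalence and scaling, deduce the inclusions from the smallness of the displacement relative to $\eps \simeq h^2$, and bound the band by an $\O(\eps)$-collar of volume $\lesssim \eps h^{d-1} = h^{d+1}$. You merely spell out in more detail the invertibility of $\Theta_{T}^{m*}$ on $\hat{T}_\eps$ and the inverse displacement bound, which the paper leaves implicit.
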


\begin{proof}
Due to norm equivalences on the space of polynomials on a reference element and its extension, and standard scaling arguments, we have with $\eps \simeq h^2 < 1$ that $\| \Theta_{T}^{m*} - \operatorname{id} \|_{\infty,\hat{T}_\eps} \lesssim \| \Theta^{m}|_{\hat{T}} - \operatorname{id} \|_{\infty,\hat{T}} \lesssim h^2$ and
$\| D \Theta_{T}^{m*} - I \|_{\infty,\hat{T}_\eps} \lesssim \| D \Theta^{m}|_{\hat{T}} - I \|_{\infty,\hat{T}} \lesssim h$. Hence, the properties of \eqref{resd4} carry over to the extended function $\Theta_{T}^{m*}$ which ensures the inclusion properties and the measure of the $\eps$-band, i.e. $(T^n \cup T^m) \setminus \hat{T}_{-\eps}$, with a bound $\eps h^{d-1}$ where $\eps \simeq h^2$. 
\end{proof}

A direct conclusion of the two previous lemmas and standard scaling arguments is that for $\hat{T} \in \T_h,~T^n = \Theta_T^n(\hat{T})$ and $\hat{v} \in \mathcal{P}^k(\hat{T})$ there hold the following equivalences
\begin{alignat}{4}
  h^{\frac{d}{2}} \Vert \hat{v} \circ  \Theta_T^{-n} \Vert_{\L^{\infty}(T^n_{\hphantom{-\eps}})} &\simeq & \Vert \hat{v} \circ  \Theta_T^{-n} \Vert_{T^n_{\hphantom{-\eps}}} &\simeq &\Vert \hat{v} \Vert_{\hat{T}_{\hphantom{-\eps}}} &\simeq& \Vert \hat{v} \Vert_{\L^\infty(\hat{T}_{\hphantom{-\eps}})}\hphantom{.} \nonumber \\[-1ex]
  \rotatebox[]{270}{$\simeq$}\hspace*{2cm} && \rotatebox[]{270}{$\simeq$}\hspace*{1.25cm} && \rotatebox[]{270}{$\simeq$}\hspace*{0.65cm} && \rotatebox[]{270}{$\simeq$}\hspace*{1.5cm} \label{eq:normequiv}\\[-1ex]
  h^{\frac{d}{2}} \Vert \hat{v} \circ \Theta_T^{-n} \Vert_{\L^{\infty}(\Tneps{n}{\pm \eps})} &\simeq& \Vert \hat{v} \circ \Theta_T^{-n} \Vert_{\Tneps{n}{\pm \eps}} &\simeq& \Vert \hat{v} \Vert_{\Thneps{}{\pm \eps}} &\simeq& \Vert \hat{v} \Vert_{\L^\infty(\Thneps{}{\pm \eps})}. \nonumber
\end{alignat}

\subsection{Properties of the parametric mapping as a function in time}\label{sec:prop param2}

As mentioned above for the isoparametric approximation of the geometry we have slightly different meshes between consecutive time steps. To do proper time stepping in such an approach we need to project solutions from one deformed mesh to another. The details about this projection are discussed in the subsequent section. As such a projection has to be applied in every time step, one may expect projection errors accumulating with the number of time steps $N \sim \frac{1}{\Delta t}$. To be able to show (in the analysis section) that this is \emph{not} the case we take a careful look at how the deformation depends on time. More specifically, we characterize where and when the deformation depends continuously on time and where and when not. This will then be exploited when analyzing the accumulation of the projection errors later in the analysis. 

Based on the properties discussed above, for a fixed time $t$, there are three different types of mapped elements: cut elements, transition elements (neighboring to cut elements) and undeformed elements. 
The cut elements are transformed based on the desired property $\hat{\phi}_h(\cdot,t) \approx \phi_h(\cdot,t) \circ \Theta(\cdot,t)$, while the undeformed elements, sufficiently far away from cut elements, have $\Theta(\cdot,t) = \id$. The remainders are transition elements which realize a proper blending between these two zones, cf. \Cref{fig:sketch:eltypes} for a sketch in the spatially one-dimensional situation.

\begin{figure}
\centering 
  \includegraphics[width=0.95\textwidth]{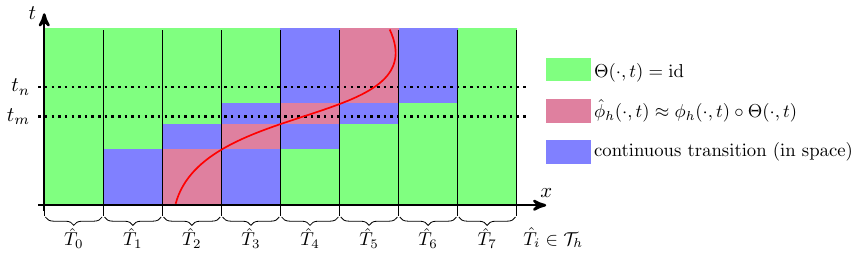} \vspace*{-0.25cm}
\caption{Sketch of different regions for the mesh deformation. At a fixed time $t$ an element is in exactly one of the three classes: cut (purple), transition (blue) or undeformed (green). Between two time instances $t_m < t_n$, for each fixed element we distinguish two situations: the type of the element and all its direct neighbors stay in the same class for all $t \in [t_m,t_n]$ or not. 
}
\label{fig:sketch:eltypes}
\end{figure}

\extendedonly{
Now, we consider a fixed element $\hat{T} \in \Th$ at two time instances $t_m < t_n$ and define a function (cut configuration changes) $\texttt{CCC}: \Th \times [0,\finaltime] \times [0,\finaltime] \to \{ \texttt{True}, \texttt{False} \}$ that allows to distinguish two cases: 
\begin{subequations}
	\begin{itemize}
		\item $\texttt{CCC}(\hat{T},t_m,t_n) = \texttt{False}$: If $\hat{T}$ and all direct neighbors stay in the same class of element types (uncut, transition, undeformed) then $\texttt{CCC}$ evaluates to $\texttt{False}$. In this case the mesh deformation depends continuously on the change of the level set function which is assumed to change Lipschitz-continuously in time which yields the bound
		\begin{equation} \label{eq:const}
		\| \Theta^m - \Theta^n \|_{\infty,\B} \leq |t_n - t_m|; 
		\end{equation}
		\item $\texttt{CCC}(\hat{T},t_m,t_n) = \texttt{True}$: If the type of element $\hat{T}$ or one of its neighbors changes within the time interval $[t_m,t_n]$, then $\texttt{CCC}$ evaluates to $\texttt{True}$. In this case the deformation on $\hat{T}$ will not necessarily be Lipschitz-continuous in time. But due to \Cref{resd4} we still have that both $\Theta^m$ and $\Theta^n$ are close to the identity and hence
		\begin{equation} \label{eq:change}
		\| \Theta^m - \Theta^n \|_{\infty,\B} \leq \| \Theta^m - \id \|_{\infty,\B} + \| \id - \Theta^n \|_{\infty,\B} \lesssim h^2. 
		\end{equation}
	\end{itemize}
\end{subequations}

Now let $t_i = i \Delta t, ~i=1,...,N$ be the time steps of a partition in time. 
Then, for every fixed element $\hat{T} \in \Th$
we define $N_D^T := \# \{ i \in \{1,..,N\} \mid \texttt{CC}(\hat{T},t_{i-1},t_{i}) = \texttt{change} \}$ as the number of time steps for the case \texttt{change}, and $N_D = \max_{\hat{T}\in\Th}\{N_D^T\}$ as the largest number of time steps among all $\hat{T}$ for the case \texttt{change} with the mesh transformations \emph{discontinuous} in time. 
}
\shortonly{
  Now, we consider a fixed element $\hat{T} \in \Th$ at two time instances $t_m < t_n$ and distinguish two cases. Either $\hat{T}$ or all neighboring elements remain of the same type of deformed elements. In this case the change in the deformation is Lipschitz-continuous and there holds 
  \begin{equation} \label{eq:const}
		\| \Theta^m - \Theta^n \|_{\infty,\B} \leq |t_n - t_m|; 
  \end{equation}
  If either $\hat{T}$ or one of its neighboring elements changes the type, the deformation will in general no longer be Lipschitz-continuous in time, and we fall back to the smallness of the deformation 
  \begin{equation} \label{eq:change}
		\| \Theta^m - \Theta^n \|_{\infty,\B} \leq \| \Theta^m - \id \|_{\infty,\B} + \| \id - \Theta^n \|_{\infty,\B} \lesssim h^2. 
  \end{equation}

  For a fixed time step size $\Delta t$, we define to every $\hat{T} \in \Th$
  the integer $N_D^T$ that counts the number of occasions where $\hat{T}$ or one of its neighbors changes type within a time step. Taking the maximum over the mesh, we further define $N_D = \max_{\hat{T}\in\Th}\{N_D^T\}$.
}

\begin{assumption} \label{ass:change}
  In the remainder we assume that for a fixed time interval $(0,\finaltime]$ and a fixed computational mesh, the number $N_D$ is bounded independent of the partition of time, but only depends on the motion of the domain. 
\end{assumption}

\subsection{Transfer operator between meshes at different time steps}\label{sec:transfer}
As mentioned above we have slightly different meshes between consecutive time steps. 
We therefore have to specify a transfer operator of finite element functions from one mesh to another. 
To this end, below in \Cref{sec:projection} we design a projection operator $\Pi^n: \Vhn{n-1} \to \Vhn{n}$. 
We note that for locality and computational efficiency we choose a projection operator that deviates from a direct $L^2$ projection.

\section{Definition of the stabilized Eulerian finite element method}
\label{sec:method}
Based on suitably adapted versions of the method of lines we introduce a full discretization. For ease of presentation we start with the low-order discretization in space and time, i.e., a piecewise linear finite element space with an implicit Euler time stepping in \Cref{sec:fulllinear}. This allows us to present the spatial discretization with the involved stabilization and extension in its simplest configuration. 
The development to higher-order approximation in space is then tackled in \Cref{sec:fullhospace}, which is followed by the extension also to higher-order approximation in time in \Cref{sec:fullhotime}. 

\subsection{A fully discrete low order prototypical formulation}\label{sec:fulllinear}
Let $k=q=1$ in which case $\phi_h=\hat{\phi}_h$ and $\Theta^n=\operatorname{id},~\Vhn{n}=\Vh,~\Thn{n}=\Th,~\Omhn{n}=\Omphi{\phi_h}{n}$ for all $n=0,..,N$. 
Each step in the low-order version as introduced in \cite{LO_ESAIM_2019} consists of three parts: 
(i) the approximation of the partial time derivative through the finite difference stencil $\tfrac{u_h^n-u_h^{n-1}}{\Delta t}$; 
(ii) the spatially discrete operator $b_h^n$ for convection and diffusion; 
(iii) a ghost-penalty-type operator $s_r^n$ for extension and stabilization. The weak form reads: \\
Find $u_h^n \in \Vrn{1}{n},\; n = 1,...,N$ for a given $u_h^0 \in \Vrn{1}{0}$, such that
\begin{align}
\int_{\Omhn{n}} \frac{u_h^n - u_h^{n-1}}{\Delta t} v_h \; dx + \bh{n}(u_h^n,v_h) + \gamma \srn{1}{n}(u_h^n,v_h) = f_h^n(v_h), \qquad \forall v_h \in \Vrn{1}{n}. 
\end{align}
Here, the bilinear form for convection and diffusion makes use of a skew-symmetrized form for the convection part
\begin{align} 
b_h^n(u_h,v_h) :=& \int_{\Omhn{n}} \nu \nabla u_h \cdot \nabla v_h \; dx + \frac{1}{2} \int_{\Omhn{n}} \left( (\w^e \cdot \nabla u_h)v_h - (\w^e \cdot \nabla v_h)u_h \right) \; dx  \nonumber \\ 
&+ \frac{1}{2} \int_{\Omhn{n}} (\nabla \cdot \w^e) u_h v_h \; dx +  \frac{1}{2} \int_{\Ghn{n}} (\w^e \cdot \n) u_h v_h \; ds, \quad \forall u_h,v_h \in \H^1(\Omhn{n}). \label{def:bh}
\end{align}
\hypertarget{def:ext}{where $(\cdot)^e$ denotes a smooth extension from $\Omn{n}$ to $\Omhn{n}$ which we assume to exist.}

The bilinear form $\srn{1}{n}(\cdot,\cdot)$ for extension and stabilization is applied with a parameter $\gamma(h,\d)$, which is yet to be defined below. 
Here, this term uses the \emph{ghost penalty} stabilization mechanism \cite{B10} where different versions to realize the same effect exists, cf. \cite[Section 4.3]{LO_ESAIM_2019}. 
We make use of the \emph{direct} or \emph{volumetric jump} formulation introduced in \cite{preussmaster} which takes the form (with $r=1$ for the implicit Euler)
\begin{equation} \label{eq:ghostpenalty}
s_{r}^{n}(u_h,v_h) := \sum_{F^n \in \Frn{r}{n}} s_{F}^n (u_h,v_h) \text{ with } s_{F}^n (u_h,v_h):= \frac{1}{h^2} \int_{\omF{F^n}} \!\!\!\!\!\!\!\! (u_1 - u_2) (v_1 - v_2) dx,
\end{equation}
where $\omF{F^n}$ is the patch of elements around $F^n$, cf. \Cref{eq:facetpatch}, and
$u_i,~v_i,~i=1,2$ are canonical extensions of \emph{mapped} polynomials, i.e. $u_i = \big( \mathcal{E}^\P  (u_h|_{T^n_i} \circ \Theta_{T_i}^n) \big) \circ \Theta_{T_i}^{-n*}$ (and similarly for $v_i$)
with $\Theta_{T_i}^{n*} = \mathcal{E}^\P(\Theta_{T_i}^n)$ where \hypertarget{def:ep}{$\mathcal{E}^\P: \P_k(\hat{T}_i) \to \P_k(\mathbb{R}^d), ~\hat{T}_i = \Theta_{T_i}^{-n}(T^n_i)$ is the canonical extension of a polynomial to the whole
space}\footnote{Note that $u_h|_{T^n_i},~i=1,2$ are only \emph{mapped} polynomials but $u_h|_{T^n_1} \circ \Theta_{T^n_i}^n$ are standard ones.}.  
The ghost penalty is responsible for two effects. On the one hand, it stabilizes the formulation to achieve robustness w.r.t. the position of the geometry within the elements. 
On the other hand, it implicitly realizes a discrete extension from $\Omhn{n}$ to $\Orn{1}{n} \supset \Omdn{\d}{n}$. 
\hypertarget{cond:delta}{This extension is required for instance to make $u_h^{n}$ well-defined for the domain $\Omhn{n+1} \subset \Omdn{\d}{n} \subset \Orn{1}{n}$. 
To this end, we make the following assumption on $\d$:
\begin{equation}\label{ass:delta}
\d \geq \Delta t \Vert \mathbf{w} \Vert_{\L^\infty((0,\finaltime),\L^\infty(\B))}
\end{equation}
}
Let us note that we take a global (in space and time) choice for $\dd$ to keep the presentation feasible, but a more localized definition of an extension region would easily be possible by considering different values for $\dd$ in different time steps and different spatial regions. 
Next, note that the solution is extended away from $\Omhn{n}$ by at least one layer of elements, i.e., by at least a distance proportional to $h$ so that for a constant $c > 0$, depending only on the shape regularity there holds
\begin{equation} \label{eq:distnnpo}
  \operatorname{dist}(\partial \Orn{1}{n}, \partial \Omhn{n}) \geq \dd + c h \gtrsim \Delta t + h.
\end{equation}  
With $\Omn{n+1} \subset \Omn{n}$ and $\dist(\partial \Omhn{n},\partial \Omn{n}) \lesssim h^2$ for all $n = 0, .., N$ we can guarantee the inclusion $\Omhn{n+1} \subset \Omdn{\dd}{n} \subset \Orn{1}{n}$ (for sufficiently small $h$).
%
\hypertarget{def:fhn}{The linear functional $f_h^n$ is simply
$ f_h^n(v_h) := \int_{\Omhn{n}} g v_h dx$ for $v_h \in \H^1(\Omhn{n})$}. %
%
If the time step is bounded by
\begin{equation} \label{eq:timestep}
\dt < \xi^{-1} := 2 \left(\|\div(\w^{\ext})\|_{\L^\i(\Omhn{n})} + \nu + c_{\Om_h}^2 \|\w^{\ext} \cdot \n\|_{\L^\i(\Omhn{n})} / 4\nu\right)^{-1}, 
\end{equation}
where $c_{\Om_h}$ is the constant of the multiplicative trace inequality, $\bh{n}$ has a lower bound
\begin{equation} \label{eq:bilinear}
	\bh{n}(v,v) \ge \frac{\nu}{2} \|\nabla v\|_{\Omhn{n}}^2 - \xxi \|v\|_{\Omhn{n}}^2. 
\end{equation}
\extendedonly{
  Here we assume that the (extended) velocity $\w^{\ext}$ has bounded divergence and normal flux. For details cf. \cite[Lemma 3.1 and Remark 4.1]{LO_ESAIM_2019}.
}
The coercivity of the overall l.h.s. bilinear form on $\Vrn{1}{n}$ w.r.t. the norm 
\begin{equation} \label{eq:norm}
|\!|\!| v |\!|\!|_n := \left(\frac{\nu}{2} \|\nabla v\|_{\Omhn{n}}^2 + \|v\|_{\Omhn{n}}^2 + \gamma \srn{r}{n}(v,v)\right)^\frac{1}{2} 
\end{equation}
guarantees the unique solvability based on the Lax-Milgram theorem.

Before specifying the parameter $\gamma$ we introduce the following assumption. 
\begin{assumption} \label{ass:paths}
Let $\T_{r}^{n,\S+}$ denote the subset of $\T_{r}^{n,\S}$ where for at least one point $\x \in T$ there holds $\phi_h(\x) > 0$. To every element in $\T_{r}^{n,\S+}$ we require an element in $\T_{r}^{n}\setminus\T_{r}^{n,\S+}$ that can be reached by repeatedly passing through facets in $\Frn{r}{n}$. We assume that the number of facets passed through during this path is bounded by $K \lesssim (1 + \frac{\dd}{h})$. Further, every ``interior'' element in the active domain, i.e. $T \in \T_{r}^{n}\setminus\T_{r}^{n,\S+}$, provides at most $M$ paths in which it serves as the terminal element of such paths, where $M$ is a number that is bounded independently of $h$ and $\dt$. 
\end{assumption}
\hypertarget{def:cgamma}{
With this definition of $\K$, we specify -- following \cite[Section 4.4]{LO_ESAIM_2019} -- $\gamma(h,\d) = c_\gamma K \lesssim 1 + \d/h$ for a constant $c_\gamma$ independet of $\dt$ and $h$. This completes the fully discrete low-order scheme. 
}
\new{   
\begin{remark}\label{rem:nu}  
  From \eqref{eq:timestep} we can already see that the analysis of the method relies on a diffusion coefficient $\nu$ which does not become arbitrarily small. This already holds for the semi-discrete discretization which is only discrete in time, cf. \cite{LO_ESAIM_2019}. Furthermore, the numerical studies for the slightly more difficult problem and discretization in \cite{vWRL_ARXIV_2020} suggest that the method is indeed not robust for vanishing $\nu$. Hence, we will assume in the remainder of this manuscript that $\nu$ is bounded from below by a constant of size $\mathcal{O} (1)$.  
\end{remark}
}

\subsection{Higher order space discretization} \label{sec:fullhospace}
The discretization above can be advanced trivially to higher order of accuracy in space if exact geometry handling is assumed or sufficiently accurate quadrature on $\Omphi{\phi_h}{n}$ is given. 
As the former is typically not realistic and the latter is hard to guarantee, we consider the application of the isoparametric mapping $\Theta^n$ to achieve higher order of geometrical accuracy. However, with the time-dependent deformation of the mesh, which implies $\Thn{n-1} \neq \Thn{n}$ and hence $u_h^{n-1} \not\in \Vhn{n}$, the need to apply a few adaptations arises.
\hypertarget{def:proj}{We make use of the \emph{consecutive} transfer operator $\Pi^n: \Vhn{n-1} \to \Vhn{n} $}, introduced in more detail in \Cref{sec:locproj}, to project initial data $u_h^{n-1}$ from one timestep to the next, and then the weak form reads: \\
Find $u_h^n \in \Vrn{1}{n},\; n = 1,...,N$ for a given $u_h^0 \in \Vrn{1}{0}$, such that
\begin{align}
\int_{\Omhn{n}} \frac{u_h^n - \Pin{n} u_h^{n-1}}{\Delta t} v_h ~dx + \bh{n}(u_h^n,v_h) + \gamma \srn{1}{n}(u_h^n,v_h) = \fhn{n}(v_h), \qquad \forall v_h \in \Vrn{1}{n}. 
\end{align}

\subsection{High order time discretization based on BDF schemes} \label{sec:fullhotime}
For high order approximation in time we apply
BDF schemes to the time derivative. We introduce the notation 
$\partial_{\dt}^{r}(...)$ for the BDF time stencils (for $r=1,2,3$): 
\begin{subequations} \label{eq:stencils}
\begin{align} \label{eq:stencil:BDF1}
  \partial_{\dt}(u_h^n, u_h^{n-1}) &:= \frac{u_h^n - u_h^{n-1}}{\Delta t}, &&  r=1; \\
  \label{eq:stencil:BDF2}
\partial_{\dt}^{2}(u_h^n, u_h^{n-1}, u_h^{n-2}) &:= \frac{3 u_h^n - 4 u_h^{n-1} + u_h^{n-2}}{2 \Delta t}, &&  r=2; \\
\label{eq:stencil:BDF3}
\partial_{\dt}^{3}(u_h^n, u_h^{n-1}, u_h^{n-2}, u_h^{n-3}) &:= \frac{11 u_h^n - 18  u_h^{n-1} + 9   u_h^{n-2} - 2  u_h^{n-3}}{6 \Delta t}, &&  r=3. 
\end{align}
\end{subequations}
In order to apply the stencils we need to take advantage of the projection operator. To do this across several time steps we define the consecutive application of projection operators over all intermediate time steps
\begin{equation}\label{eq:defprojr}
\Pi_{n-r}^n: \Vhn{n-r} \to \Vhn{n}, \qquad v \mapsto \Pin{n} ~ \Pin{n-1} \cdots \Pin{n-r+1} v.
\end{equation}
Then, the weak form reads: \\
Find $u_h^n \in \Vrn{r}{n}, ~n = r,...,N$ for given $u_h^0 \in \Vrn{r}{0}$,..., $u_h^{r-1} \in \Vrn{r}{r-1}$, such that for $\forall v_h \in \Vrn{r}{n}$
\begin{equation}\label{eq:disceq}
  \int_{\Omhn{n}} \dtBDFr{r}(u_h^n,  ..., \Pimn{n-r}{n} u_h^{n-r})  v_h ~dx  + \bh{n}(u_h^n,v_h) + \gamma \srn{r}{n}(u_h^n,v_h) = \fhn{n}(v_h).
\end{equation}
Note that the stabilization bilinear form $\srn{r}{n}$ now expands to a larger region extended by $r\dd$ distance plus $r$ additional element layers. 
\begin{remark}
  \label{rmk trans opr}
In an implementation it is not necessary to apply the whole chain of the projection $\Pimn{n-l}{n}$ for $1 < l \leq r$ as the terms involving $\Pin{m},~ m < n$ will be needed in previous time steps already and can be reused, i.e., there is actually only the projection $\Pin{n}$ to be evaluated at each time step (on possibly several terms though). 
\end{remark}

\section{Efficient higher-order projection for isoparametric unfitted FEM}~~
\label{sec:projection}
In this section we discuss the operator $\Pin{n}$ between consecutive time levels in detail. 

\subsection{Definition of a projection based on essentially local operations} \label{sec:locproj}
Let $v_{\Thn{m}} \in \Vhn{m}$, $m=n-1$ be a discrete function\footnote{Remember that we identify functions on restricted domain, e.g., in $\V_{h,\dd}^m$ with their finite element extensions by setting the remaining degrees of freedom to zero.} w.r.t. the mesh $\Thn{m}$. We aim to approximate it on $\Thn{n}$ 
with $v_{\Thn{n}} := \Pimn{m}{n} v_{\Thn{m}} \in \Vhn{n}$, i.e., a discrete function w.r.t. the (slightly different) mesh $\T_{h}^n$. This projection is achieved in three steps: 
\begin{enumerate}[label=(\roman*)]
\item Firstly, by exploiting that $v_{T^m}:=v_{\Thn{m}}|_{T^m}$, the restriction of $v_{\Thn{m}}$ to an element $T^m \in \T_{h}^m$, is smooth, we define an extension $v_{T^m}^{*} $ of $v_{T^m}$ to a small neighborhood $T^m_\eps$ of $T^m$ with $T^n \subset T^m_\eps$, such that $v_{T^m}^{*} \in \C^{\infty}(T^m_\eps)$;
\item Secondly, we project these extensions into $\bigoplus_{T^n\in\T_{h}^n}\Vhn{n}|_{T^n}$, i.e., the discontinuous (across element interfaces) version of $\Vhn{n}$, yielding $\tilde{v}_{\Thn{n}}$; 
\item Thirdly, we apply an Oswald-type interpolation of $\tilde{v}_{\Thn{n}}$ to obtain $v_{\Thn{n}} \in \Vhn{n}$.
\end{enumerate}
The first two steps are completely element-local and allowed for a trivial parallelization, especially as the access to neighboring elements is not required, whereas the third step is a high efficent vector operation (averaging). This is in contrast to an only seemingly simpler approach such as a global $\L^2$ projection.

\subsubsection{Element-local extensions}\label{sec:ext}
For an undeformed element $\hat{T} \in \Th$ we introduce the notation $T^i = \Theta_T^i(\hat{T}) \in \Thn{i}$ with $\Theta_T^i := \Theta^i|_{\hat{T}} \in [\P^q(\hat{T})]^d, ~i \in \{m,n\}$.

The restriction of $v_{\Thn{m}} \in \Vhn{m}$ to $T^m$, i.e. $v_{T^m} := v_{\Thn{m}}|_{T^m}$, is a mapped polynomial. 
We can map it back to the undeformed element $\hat{T} \in \Th$ and realize
that there is
$\hat{v}_{T} \in \P^k(\hat{T})$ such that $v_{T^m} = \hat{v}_{T} \circ \Theta_{T}^{-m}$.
Let $\hat{v}_{T}^{*} = \Ep \hat{v}_T \in \P^k(\Thneps{}{\eps})$ and $\Theta_{T}^{m*} = \Ep \Theta_T^m \in [\P^q(\Thneps{}{\eps})]^d$ be the canonical extension of this polynomial to the $\eps$-neighborhood $\Thneps{}{\eps}$ of $\hat{T}$. 
With $v_{T^m}^{*} := \hat{v}_{T}^{*} \circ \Theta_{T}^{-m*} = \Ep \hat{v}_T \circ (\Ep \Theta_T^m)^{-1}$ we have a smooth extension of $v_{T^m}$ from $T^m$ to \hypertarget{def:Tneps}{$T^{m}_\eps := \Theta_{T}^{m*}(\Thneps{}{\eps})$}, such that $v_{T^m}^*|_{T^m} = v_{T^m}$ still holds and furthermore $T^n \subset \Tneps{m}{\eps}$. A sketch of this extension is given in \Cref{fig:extproj}.

\begin{figure}
  \vspace*{-0.2cm}
  \begin{center}
    \includegraphics[trim=0.2cm 0.0cm 0.2cm 0.0cm, clip=true, width=0.9\textwidth]{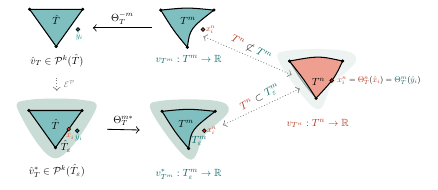} 
  \end{center}
  \vspace*{-0.5cm}
  \caption{
    Sketch of extension (\Cref{sec:ext}) and transfer operation (\Cref{sec:shiftedeval}). 
    For an element $T^m = \Theta_T^m(\hat{T}) \in \Thn{m}$ the extension $T^m_{\eps}$ covers $T^n = \Theta_T^n(\hat{T}) \in \Thn{n}$. For a Lagrange node $x_i^n$ in $T^n$ the mappings $\Theta_T^{-n}$ and $\Theta_T^{-m*}$, respectively, yield different points $\hat{x}_i$, $\hat{y}_i$ in $\hat{T}_{\eps}$. }
  \label{fig:extproj} 
\end{figure}

\subsubsection{Element-local interpolation (shifted evaluation)}\label{sec:shiftedeval}
With $T^n \subset \Tneps{m}{\eps}$, we can define the following element-local interpolation that for given $v_{T^m}^{*}$ as constructed in the previous section it yields $\tilde{v}_{T^n} \in \P^k(\hat{T}) \circ \Theta_{T}^{-n}$, or equivalently
$\hat{v}_{T^n} \in \P^k(\hat{T})$ with $\hat{v}_{T^n} = \tilde{v}_{T^n} \circ \Theta_T^n$, by nodal interpolation.

Let \hypertarget{def:L}{$L(\hat{T}) = \{\hat{x}_i\}_{i=1,...,M_L}, ~M_L=\# L(\hat T)$ be the set of Lagrange nodes} of $\Vhn{n}|_{\hat{T}} = \mathcal{P}^k(\hat{T})$ on $\hat T$ with corresponding set of Lagrange basis functions $\{\hat{\varphi}_i\}_{i=1,...,{M_{\LagL}}}$, s.t. $\hat{\varphi}_i(\hat{x}_j) = \delta_{ij},~i,j=1,..,{M_{\LagL}}$. The correspondingly mapped nodes and basis functions are $\LagL(T^n) := \{x_i^n\}_{i=1,...,{M_{\LagL}}}$ and $\{{\varphi}_i^n\}_{i=1,...,{M_{\LagL}}}$ with $x_i^n = \Theta_T^n(\hat{x}_i)$ and $\varphi_i^n = \hat{\varphi}_i \circ \Theta_T^{-n}$. 
We define
\begin{subequations}
\begin{align}
  \tilde{v}_{T^n}(x) & := \sum_{i=1}^{{M_{\LagL}}}  v_{T^m}^{*}(x_i^n) \varphi_i^n(x), \qquad \forall x \in T^n
  \intertext{
or equivalently, with $v^{*}_{T^m} = \hat v^{*}_{T} \circ \Theta_T^{-m*}$ and $x_i^n = \Theta_T^n(\hat{x}_i)$ we have}
  \hat{v}_{T}(\hat{x}) & := \sum_{i=1}^{{M_{\LagL}}} \hat{v}_{T}^{*} \underbrace{\left(\Theta_{T}^{-m*}\left(\Theta_{T}^n(\hat{x}_i)\right)\right)}_{\hat{y}_i} \hat{\varphi}_i(\hat{x}), \qquad \forall \hat{x} \in \hat{T}. 
\end{align}
\end{subequations}
Let us stress that \hypertarget{def:yihat}{$\hat{x}_i \ne \hat{y}_i := \Theta_{T}^{-m*}(x_i^n)$} and we hence call this step \emph{shifted evaluation}, cf. \Cref{fig:extproj} for a sketch of the relation between $\hat{x}_i$ and $\yihat$.
By setting $\tilde{v}_{\Thn{n}}|_{T^n} := \tilde{v}_{T^n}$ for all $T^n \in \T_{h}^n$ we obtain $\tilde{v}_{\Thn{n}} \in \bigoplus_{T^n\in\T_{h}^n} \Vhn{n}|_{T^n}$. 

\subsubsection{Projection into the space of continuous functions}
\label{sss:oswald}
After the previous steps we obtain a discontinous, element-wise (mapped) polynomial approximation on $\Thn{n}$. We then apply an Oswald-type quasi-interpolation $P_h: \bigoplus_{T^n\in\T_{h}^n} \C(T^n) \to \Vhn{n}$ in order to get a continuous function in $\Vhn{n}$.
Let $\LagL(\Thn{n}) = \{x_i^n\}$ be the set of Lagrange nodes of $\Vhn{n}$ on $\Thn{n}$, and $\{\varphi_i^n\}$ the set of corresponding Lagrange basis functions. Let $\omx{x_i^n}$ be the set of elements containing the \extendedonly{Lagrange }node $x_i^n$.
The Oswald-type projector $P_h$ is the following generalization of the Lagrange interpolation for a discontinuous function $v$:
\begin{equation}
P_h v := \!\!\! \sum_{x_i^n \in \LagL(\Thn{n})} \!\!\! \Big( \# \omx{x_i^n}^{-1} \!\!\! \sum_{T^n \in \omx{x_i^n}} \!\!\! v|_{T^n}(x_i^n) \Big) \varphi_i^n.
\end{equation}

\subsection{Analysis of the projection} \label{ssec:error:proj}
We start with a simple observation for the norm evaluation w.r.t. one mesh of a function defined on another (slightly different) mesh. 
\begin{lemma} \label{lem:shiftednorm}
For $v_h \in \Vhn{n}$, $w_h \in \Vhn{m}$, $T_i = \Theta_{T}^i(\hat{T})$, $\hat{T} \in \Th$, $i\in\{m,n\}$, there holds
  \begin{equation}
	\| v_h + w_h \|_{T^n} \lesssim h^{\frac{d}{2}} \| v_h + w_h \|_{\L^{\infty}(\Tneps{n}{-\eps})} + h^{\frac52} \| \nabla v_h \|_{T^n} + h^{\frac52} \| \nabla w_h \|_{\omT{T^m}}. 
  \end{equation}
\end{lemma}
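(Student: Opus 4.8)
The plan is to split the domain of integration $T^n$ into the slightly deflated (curved) element $\Tneps{n}{-\eps}$ and the thin collar $T^n\setminus\Tneps{n}{-\eps}$, with $\eps\simeq h^2$ the radius from \Cref{lemma:dmax}, and to estimate the two contributions to
\begin{equation*}
\|v_h+w_h\|_{T^n}^2 = \|v_h+w_h\|_{\Tneps{n}{-\eps}}^2 + \|v_h+w_h\|_{T^n\setminus\Tneps{n}{-\eps}}^2
\end{equation*}
separately. For the first term, since $\Tneps{n}{-\eps}=\Theta_T^{n*}(\hat{T}_{-\eps})\subset T^n$ has measure $\lesssim h^d$, a plain Hölder estimate gives $\|v_h+w_h\|_{\Tneps{n}{-\eps}}\lesssim h^{d/2}\|v_h+w_h\|_{\L^\infty(\Tneps{n}{-\eps})}$, which is already the first term on the right-hand side.

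The work lies in the collar. First I would record, using \Cref{lemma:dmax} together with the fact that $\Theta_T^{n*}$ is bi-Lipschitz with constants close to one and $\O(h^2)$-close to $\id$, that $T^n\setminus\Tneps{n}{-\eps}=\Theta_T^{n*}(\hat{T}\setminus\hat{T}_{-\eps})$ has measure $\lesssim h^{d+1}$ and lies in an $\O(h^2)$-tube around $\partial T^n$; in particular every $x\in T^n\setminus\Tneps{n}{-\eps}$ admits a point $y\in\Tneps{n}{-\eps}$ with $|x-y|\lesssim h^2$, and the whole segment $[x,y]$ is contained in $\omT{T^m}$, since $T^n$ lies within an $\O(h^2)$-neighbourhood of $T^m$ while, by shape regularity, any element not touching $T^m$ is at distance $\gtrsim h$ from it. Since $v_h$ and $w_h\in C(\B)$ are continuous and piecewise smooth, the fundamental theorem of calculus along $[x,y]$ yields
\begin{equation*}
|(v_h+w_h)(x)| \le \|v_h+w_h\|_{\L^\infty(\Tneps{n}{-\eps})} + |x-y|\big(\|\nabla v_h\|_{\L^\infty([x,y])}+\|\nabla w_h\|_{\L^\infty([x,y])}\big).
\end{equation*}
On $[x,y]$ the gradient of $v_h$ is controlled -- passing to the canonical polynomial extension $v_h^{*}$ of $v_h|_{T^n}$ on $\Tneps{n}{\eps}$ and invoking \eqref{eq:normequiv} together with a standard inverse estimate -- by $\|\nabla v_h^{*}\|_{\L^\infty(\Tneps{n}{\eps})}\lesssim h^{-d/2}\|\nabla v_h\|_{T^n}$; and $\|\nabla w_h\|_{\L^\infty([x,y])}$ is bounded by the maximum of the elementwise $\L^\infty$-norms of $\nabla w_h$ over the $\O(1)$ elements of $\Thn{m}$ met by $[x,y]$ -- all of which lie in $\omT{T^m}$ -- hence by $h^{-d/2}\|\nabla w_h\|_{\omT{T^m}}$, again via \eqref{eq:normequiv}. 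Taking the supremum over $x$ and using $|x-y|\lesssim h^2$,
\begin{equation*}
\|v_h+w_h\|_{\L^\infty(T^n\setminus\Tneps{n}{-\eps})} \lesssim \|v_h+w_h\|_{\L^\infty(\Tneps{n}{-\eps})} + h^{2-\frac{d}{2}}\big(\|\nabla v_h\|_{T^n}+\|\nabla w_h\|_{\omT{T^m}}\big),
\end{equation*}
and multiplying by $|T^n\setminus\Tneps{n}{-\eps}|^{1/2}\lesssim h^{(d+1)/2}$ produces the collar bound
\begin{equation*}
\|v_h+w_h\|_{T^n\setminus\Tneps{n}{-\eps}} \lesssim h^{\frac{d+1}{2}}\|v_h+w_h\|_{\L^\infty(\Tneps{n}{-\eps})} + h^{\frac{5}{2}}\big(\|\nabla v_h\|_{T^n}+\|\nabla w_h\|_{\omT{T^m}}\big).
\end{equation*}

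Adding the two contributions and absorbing $h^{(d+1)/2}\|v_h+w_h\|_{\L^\infty(\Tneps{n}{-\eps})}\le h^{d/2}\|v_h+w_h\|_{\L^\infty(\Tneps{n}{-\eps})}$ for $h\le1$ into the leading term gives the claim. The main obstacle is not any deep inequality but the geometric bookkeeping in the collar step: making precise that $\Tneps{n}{-\eps}$ differs from $T^n$ by an $\O(h^2)$-wide, $\O(h^{d+1})$-volume tube under the only bi-Lipschitz (curved) map $\Theta_T^{n*}$, that the connecting segments stay inside $\omT{T^m}$, and that the inverse estimates for the mapped polynomials $v_h$ and $w_h$ survive on the slightly inflated regions -- all of which reduces to \Cref{lemma:dmax} and the norm equivalences \eqref{eq:normequiv} rather than to a new estimate.
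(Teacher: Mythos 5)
Your proposal is correct and follows essentially the same route as the paper's proof: split $T^n$ into $\Tneps{n}{-\eps}$ and the $\O(h^{d+1})$-measure collar, use a mean-value/fundamental-theorem argument along a short segment of length $\lesssim\eps\simeq h^2$ to control the collar values by interior values plus gradients, and convert the $\L^\infty$ gradient norms to $\L^2$ norms via the norm equivalences \eqref{eq:normequiv}. Your treatment is in fact slightly more careful than the paper's on two points it glosses over — that the connecting segment may leave the curved element (handled via the canonical polynomial extension) and that it stays inside $\omT{T^m}$ — but the substance of the argument is identical.
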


\begin{proof} Obviously we have
$\| v_h + w_h \|_{T^n} \lesssim \| v_h + w_h \|_{\Tneps{n}{-\eps}} + \| v_h + w_h \|_{T^n \setminus \Tneps{n}{-\eps}}$. The first term on the right hand side is simply bounded by $h^\frac{d}{2} \|v_h + w_h\|_{\L^\infty(\Tneps{n}{-\eps})}$. For the second term we use \Cref{lemma:dmax}, i.e. $\meas_d(T^n \setminus \Tneps{n}{-\eps}) \lesssim h^{d+1}$, and that for $x \in T^n \setminus \Tneps{n}{-\eps}$ there is 
$y \in \partial \Tneps{n}{-\eps}$ and $z \in \operatorname{conv}\{x,y\} \subset T \setminus \Tneps{n}{-\eps}$ such that $|(v_h+w_h)(x)| \leq |(v_h+w_h)(y)| + |(\nabla(v_h+w_h))(z)| ~ |y-x|$, and hence with $\eps \lesssim h^2$ we obtain
  \begin{align*}
	\| v_h + w_h & \|_{T^n \setminus \Tneps{n}{-\eps}} \lesssim h^{\frac{d+1}{2}} \| v_h + w_h \|_{\L^{\infty}(T^n \setminus \Tneps{n}{-\eps})} \\
	& \lesssim h^{\frac{d+1}{2}} \left( \| v_h + w_h \|_{\L^{\infty}(\Tneps{n}{-\eps})} + \eps \| \nabla (v_h + w_h) \|_{\L^{\infty}(T^n \setminus \Tneps{n}{-\eps})} \right) \\
	& \lesssim h^{\frac{d+1}{2}} \left( \| v_h + w_h \|_{\L^{\infty}(\Tneps{n}{-\eps})} + h^2 \| \nabla v_h \|_{\L^{\infty}(T^n)} + h^2 \| \nabla w_h \|_{\L^{\infty}(\omT{T^m})} \right) \\
	& \lesssim h^{\frac{d+1}{2}} \| v_h + w_h \|_{\L^{\infty}(\Tneps{n}{-\eps})} + h^{\frac52} \| \nabla v_h \|_{T^n} + h^{\frac52} \| \nabla w_h \|_{\omT{T^m}}.
  \end{align*}
  \extendedonly{For the last step we made use of norm equivalences on a reference element after transformation\footnote{Note that we cannot directly apply such a result for $v_h + w_h$ as $w_h$ and $v_h$ are not from the same (mapped) polynomial space.}.}
\end{proof}

\begin{lemma}\label{lem:projstab}
  Let $\Pin{n}: \V_{h}^m \to \V_{h}^n, ~m=n-1$ be the projection for a discrete function $v_h \in \V_{h}^m$ from the mesh $\T_{h}^m$ to the mesh $\T_{h}^n$.
  Further, let $\tilde{\T}_{h} \subset \T_h$ be an arbitrary selection of elements and the corresponding deformed meshes $\tilde{\T}_{h}^m$, $\tilde{\T}_{h}^n$ with the corresponding domains $\tilde{\O}_{h}^m$, $\tilde{\O}_{h}^n$, respectively. 
  For $c_{\ref{lem:projstab}a} > 0$ and $c_{\ref{lem:projstab}b} > 0$ independent of $h$ and $\Delta t$ there holds for $\hat{T} \in \T_h,~T^n = \Theta_T^n(\hat{T}), T^m = \Theta_T^m(\hat{T})$: 
        \begin{subequations}
        \begin{align}
          \| \Pin{n} v_h \|_{T^n}^2 &\lesssim \| v_h \|_{\omT{T^m}}^2, &          \| \Pin{n} v_h \|_{\tilde{\O}_{h}^{n}}^2 &\leq c_{\ref{lem:projstab}a} \| v_h \|_{\tilde{\O}_{h,+}^{m}}^2, \label{eq:projstab1}\\
          \| \nabla \Pin{n} v_h \|_{T^n}^2 & \lesssim \| \nabla v_h \|_{\omT{T^m}}^2, &          \| \nabla \Pin{n} v_h \|_{\tilde{\O}_{h}^{n}}^2 &\leq c_{\ref{lem:projstab}b} \| \nabla v_h \|_{\tilde{\O}_{h,+}^{m}}^2. \label{eq:projstab2}
        \end{align}
      \end{subequations}
    \end{lemma}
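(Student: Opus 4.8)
The strategy is to reduce the global estimates in \eqref{eq:projstab1}--\eqref{eq:projstab2} to local, element-wise estimates on a single $\hat{T}\in\T_h$ and then sum over the (finitely overlapping) patches. The projection $\Pi^n = P_h \circ (\text{shifted evaluation}) \circ (\text{element-local extension})$ is a composition of three operations, and $P_h$ (the Oswald averaging) is an $\L^2$-stable operator on $\Vhn{n}$ up to a bounded finite-overlap factor, so it suffices to bound the discontinuous intermediate function $\tilde{v}_{\Thn{n}}$ element-by-element. Thus the heart of the matter is the local bound $\|\tilde{v}_{T^n}\|_{T^n} \lesssim \|v_h\|_{\omT{T^m}}$ and its gradient analogue $\|\nabla \tilde{v}_{T^n}\|_{T^n} \lesssim \|\nabla v_h\|_{\omT{T^m}}$.

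First I would fix $\hat{T}\in\T_h$, write $v_h|_{T^m} = \hat{v}_T\circ\Theta_T^{-m}$ with $\hat{v}_T\in\P^k(\hat T)$, and pass to the reference-element picture. By \Cref{eq:normequiv} (the norm equivalences on polynomials and their $\eps$-extensions), $\|\tilde{v}_{T^n}\|_{T^n}\simeq h^{d/2}\|\hat{v}_T\|_{\L^\infty(\hat T)}$ up to the deformation Jacobians, which are $\simeq 1$ by \eqref{resd4}. Since $\tilde{v}_{T^n}$ is the nodal interpolant whose values are $v_{T^m}^*(x_i^n) = \hat{v}_T^*(\hat y_i)$ with $\hat y_i = \Theta_T^{-m*}(\Theta_T^n(\hat x_i)) \in \hat T_\eps$ by \Cref{lemma:dmax}, we get $\|\tilde{v}_{T^n}\|_{\L^\infty(T^n)} \lesssim \max_i |\hat{v}_T^*(\hat y_i)| \le \|\hat{v}_T^*\|_{\L^\infty(\hat T_\eps)} \lesssim \|\hat{v}_T\|_{\L^\infty(\hat T)}$, where the last step is the norm equivalence between a polynomial on $\hat T$ and its canonical extension $\Ep\hat v_T$ to the slightly larger $\hat T_\eps$ (valid because $\eps\simeq h^2$). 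Converting back, $\|\hat v_T\|_{\L^\infty(\hat T)}\simeq h^{-d/2}\|v_h\|_{T^m}\le h^{-d/2}\|v_h\|_{\omT{T^m}}$, which gives the first local estimate; summing over $\hat T$ and using that $\{\omT{T^m}\}$ has bounded overlap (shape-regularity) gives the global bound with $c_{\ref{lem:projstab}a}$, where the ``$+$'' layer appears because $\omT{T^m}$ reaches one element beyond $T^m$.

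For the gradient estimates I would proceed analogously: $\|\nabla\tilde{v}_{T^n}\|_{T^n}\simeq h^{d/2}\|\nabla\tilde{v}_{T^n}\|_{\L^\infty(T^n)}$, and since $\tilde v_{T^n}$ is a (mapped) polynomial of degree $k$ on $T^n$, an inverse estimate bounds $\|\nabla\tilde v_{T^n}\|_{\L^\infty(T^n)}\lesssim h^{-1}\|\tilde v_{T^n}\|_{\L^\infty(T^n)}$ — but this would lose a power of $h$, so instead I would differentiate the representation $\hat v_T(\hat x) = \sum_i \hat v_T^*(\hat y_i)\hat\varphi_i(\hat x)$ and note that $\nabla\hat v_T = \sum_i \hat v_T^*(\hat y_i)\nabla\hat\varphi_i$; subtracting the constant $\hat v_T^*(\hat y_1)$ (which the interpolation reproduces) we can write $\nabla\hat v_T = \sum_i (\hat v_T^*(\hat y_i) - \hat v_T^*(\hat y_1))\nabla\hat\varphi_i$. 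The differences $\hat v_T^*(\hat y_i)-\hat v_T^*(\hat y_1)$ are bounded by $\|\nabla\hat v_T^*\|_{\L^\infty(\hat T_\eps)}\,|\hat y_i - \hat y_1|\lesssim \|\nabla\hat v_T\|_{\L^\infty(\hat T)}$ (the nodes are $O(h)$ apart and $\nabla\hat\varphi_i$ is $O(h^{-1})$, but $|\hat y_i-\hat y_1|\lesssim h$), which yields $\|\nabla\hat v_T\|_{\L^\infty(\hat T)}\lesssim \|\nabla\hat v_T\|_{\L^\infty(\hat T)}$ — circular unless handled carefully. The clean route is to observe that the map $\hat v_T \mapsto \hat v_{T^n}$ (shifted evaluation) is linear on $\P^k$, is the identity on constants, hence by a Bramble--Hilbert/compactness argument on the reference element its operator norm with respect to the $\P^k$-seminorm $|\cdot|_{H^1}$ is bounded uniformly (the shift $\hat y_i - \hat x_i$ is itself $O(h)$ controlled, and one checks the estimate is uniform under the $O(h^2)$ perturbation of the deformation via \eqref{resd4}). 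Transforming back via \eqref{eq:normequiv} then gives $\|\nabla\tilde v_{T^n}\|_{T^n}\lesssim\|\nabla v_h\|_{\omT{T^m}}$. Finally, applying $P_h$ costs only a bounded factor because Oswald averaging is $\L^2$- and $H^1$-stable on piecewise polynomials up to the finite-overlap number of the node patches $\omx{x_i^n}$, and summing the local bounds over all elements in $\tilde{\T}_h$, using bounded overlap once more, delivers \eqref{eq:projstab1}--\eqref{eq:projstab2}.

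\textbf{Main obstacle.} The delicate point is the gradient stability of the \emph{shifted} nodal evaluation: naive inverse estimates lose an $h$, so one must genuinely exploit that the evaluation points $\hat y_i$ differ from the true Lagrange nodes $\hat x_i$ only by an $O(h)$ (indeed, $O(h^2)$ after accounting for $\Theta_T^{-m*}\circ\Theta_T^n$ being close to identity) perturbation, and that the operator reproduces constants, so that the $H^1$-seminorm is preserved up to a constant uniform in $h$. Making this uniform-in-$h$ argument rigorous — ideally via a scaling/Bramble--Hilbert argument on the fixed reference element, with the perturbation bounds from \Cref{lemma:dmax} and \eqref{resd4} feeding in — is where the real work lies; the rest is bookkeeping with norm equivalences and finite-overlap summation.
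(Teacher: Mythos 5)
Your $\L^2$ argument is essentially the paper's: pass to the reference configuration, bound the shifted nodal values by $\|\hat v_T^*\|_{\L^\infty(\hat T_\eps)}\lesssim\|\hat v_T\|_{\L^\infty(\hat T)}$ via the polynomial-extension norm equivalence (\Cref{lemma:dmax}, \eqref{eq:normequiv}), convert back with $h^{\pm d/2}$ factors, and sum with finite overlap; the ``$+$'' layer enters through the patch $\omT{T^m}$ exactly as you say. For the gradient bound you correctly identify the key mechanism --- the operator reproduces constants, so only the $\H^1$-seminorm of the input should appear --- but you then reach for a Bramble--Hilbert/compactness argument on the reference element, which is the one place where your route is shakier than the paper's: the local operator is not a fixed map on $\P^k(\hat T)$ but depends on $h$ through $\Theta^m$, $\Theta^n$ and the whole Oswald patch, so uniformity of the compactness constant is precisely what needs proving, not what can be assumed. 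The paper closes this constructively: set $\bar v_h$ to be the mean of $v_h$ over $\omT{T^m}$, use $\Pin{n}\bar v_h=\bar v_h$ on $\Tneps{n}{-\eps}$, apply an inverse inequality together with the already-established $\L^\infty$-stability to get $\|\nabla\Pin{n}(v_h-\bar v_h)\|_{\Tneps{n}{-\eps}}\lesssim h^{d/2}h^{-1}\|v_h-\bar v_h\|_{\L^\infty(\omT{T^m})}$, and recover the lost power of $h$ from the Poincar\'e inequality $\|v_h-\bar v_h\|_{\omT{T^m}}\lesssim h\|\nabla v_h\|_{\omT{T^m}}$ on the connected patch of diameter $\simeq h$. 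Incidentally, the difference-quotient computation you dismiss as ``circular'' is not: it bounds the gradient of the \emph{output} polynomial by that of the \emph{input} (your notation merely collides), though it only handles a single element and would still need the patch-wide continuity of $v_h$ to control contributions from neighboring elements in the Oswald average --- which is again what the mean-subtraction argument handles for free.
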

    \begin{proof}
We have $\| \Pin{n} v_h \|_{T^n} \simeq \| \Pin{n} v_h \|_{\Tneps{n}{-\eps}}$, cf. \eqref{eq:normequiv}. For $x \in \Tneps{n}{-\eps}$ and $\hat{x} = \Theta^{-n}(x) \in \Thneps{}{-\eps}$ we have by definition \extendedonly{(recall the notation introduced in \Cref{sec:locproj})}
\begin{equation*}
	\Pin{n} v_h (x) =             
	\sum_{i=1}^{{M_{\LagL}}}
	\Big(
	\# \homega{x_i^n}^{-1}
	\sum_{\tilde{T} \in \homega{x_i^n}}
	\hat{v}_{\tilde{T}}^*(\yihat)
	\Big)  \hat{\varphi}_i(\hat{x})
\end{equation*}
for ${M_{\LagL}} = \# \LagL(T^n) \lesssim 1$\extendedonly{ the number of Lagrange nodes on $T^n$ and the patch w.r.t. to the undeformed mesh $\Th$, $\homega{x_i^n} = \Theta^{-m}(\omx{x_i^n})$}. Hence $\| \Pin{n} v_h \|_{\Tneps{n}{-\eps}} \lesssim h^{\frac{d}{2}} \| v_h \|_{\L^{\infty}(\omT{T^m})} \lesssim \| v_h \|_{\omT{T^m}}$. Summing over all elements in $\tilde{\T}_{h}^n$ yields the first result. 
For the second equation we proceed similarly after introducing $\bar{v}_h = \frac{1}{|\omT{T^m}|}\int_{\omT{T^m}} v_h ~ds$ with $\Pin{n} \bar{v}_h = \bar{v}_h$ on $\Tneps{n}{-\eps}$
\begin{align*}
	\| \nabla \Pin{n} v_h \|_{\Tneps{n}{-\eps}} & = \| \nabla \Pin{n} (v_h - \bar{v}_h) \|_{\Tneps{n}{-\eps}}
	\lesssim h^{\frac{d}{2}} h^{-1} \| v_h - \bar{v}_h \|_{\L^{\infty}(\omT{T^m})} \\
	& \lesssim h^{-1} \| v_h - \bar{v}_h \|_{\omT{T^m}}
	\lesssim \| \nabla v_h \|_{\omT{T^m}}.
\end{align*}    
    \end{proof}

\begin{lemma} \label{lemma:proj}
	Let $\Pin{n}: \V_{h}^m \to \V_{h}^n, ~m=n-1$ be the projection for a discrete function $v_h \in \V_{h}^m$ from the mesh $\T_{h}^m$ to the mesh $\T_{h}^n$.
	Then there holds for $T^n \in \T_{h}^n$ and $T^m = \Theta^m(\Theta^{-n}(T^n)) \in \T_{h}^m$
	\begin{subequations}
		\begin{align}\label{eq:proj:a}
                  \| (\operatorname{id} - \Pin{n}) v_h \|_{T^n}  &\lesssim h^2 \| \nabla v_h \|_{\omT{T^m}}  \lesssim h \| v_h \|_{\omT{T^m}}.
\\
                  \intertext{
\extendedonly{Here, $\omT{T^m}$ is the element patch to $T^m$. }Hence for an arbitrary selection of elements $\tilde{\T}_{h}$ and the corresponding deformed meshes $\tilde{\T}_{h}^m$, $\tilde{\T}_{h}^n$ with the corresponding domains $\tilde{\O}_{h}^m$, $\tilde{\O}_{h}^n$, respectively, there holds
                  }
\| (\operatorname{id} - \Pin{n}) v_h \|_{\tilde{\O}_{h}^n} &\lesssim h^2 \| \nabla v_h \|_{\tilde{\O}_{h,+}^m} \label{eq:proj:b} \lesssim h \| v_h \|_{\tilde{\O}_{h,+}^m}. 
		\end{align}
	\end{subequations}
\end{lemma}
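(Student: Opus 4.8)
The plan is to establish the local estimate \eqref{eq:proj:a} first, then obtain the global estimate \eqref{eq:proj:b} by squaring and summing over the selected elements, using the finite overlap of element patches. The second inequality in each line (the one with $h\|v_h\|$ in place of $h^2\|\nabla v_h\|$) then follows from an inverse inequality on the mapped polynomial space, so the real work is in the bound with the gradient on the right-hand side.

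For the local bound, first I would decompose $(\id-\Pi^n)v_h$ on $T^n$ into the error introduced by the shifted evaluation (step (ii) of the construction in \Cref{sec:shiftedeval}) and the error introduced by the Oswald averaging (step (iii) in \Cref{sss:oswald}). For the shifted-evaluation part, the key observation is that $v_h|_{T^m}$ is a mapped polynomial whose pullback $\hat v_T$ lives in $\P^k(\hat T)$; its smooth extension $v_{T^m}^*$ agrees with $v_h$ on $T^m$. The nodal interpolant $\tilde v_{T^n}$ differs from $v_h$ only because it samples $v_{T^m}^*$ at the \emph{shifted} points $\hat y_i = \Theta_T^{-m*}(\Theta_T^n(\hat x_i))$ rather than at $\hat x_i$. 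By \Cref{lemma:dmax} (and the norm equivalences \eqref{eq:normequiv}) we have $|\hat x_i - \hat y_i| \lesssim h^2$, since both deformations are $\O(h^2)$-close to the identity on the $\eps$-neighbourhood. A first-order Taylor expansion of the polynomial $\hat v_T^*$ about $\hat x_i$ then gives a pointwise bound $|\hat v_T^*(\hat y_i) - \hat v_T(\hat x_i)| \lesssim h^2 \|\nabla \hat v_T^*\|_{\L^\infty}$, which after transforming back and using norm equivalence yields $\|(\id - P_h^{\mathrm{loc}})v_h\|_{T^n} \lesssim h^2 h^{-d/2} \|\nabla \hat v_T^*\|_{\L^\infty} \lesssim h^2 \|\nabla v_h\|_{\omT{T^m}}$ (the patch appears because extending and re-mapping reaches neighbouring deformed elements). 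For the Oswald step, a standard argument bounds the averaging error by the jumps of $\tilde v_{T^n}$ across facets in $\omT{T^n}$; each such jump is again controlled by $h^2\|\nabla v_h\|$ on the relevant patch, because on each individual element $\tilde v_{T^n}$ approximates the same smooth $v_h$ up to the shifted-evaluation error just estimated, so the jumps are themselves $\O(h^2)$ in the appropriate $\L^\infty$ sense. Combining the two contributions gives \eqref{eq:proj:a}.

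For the global estimate I would simply write $\|(\id-\Pi^n)v_h\|_{\tilde\O_h^n}^2 = \sum_{T^n} \|(\id-\Pi^n)v_h\|_{T^n}^2 \lesssim h^4 \sum_{T^n} \|\nabla v_h\|_{\omT{T^m}}^2$, and then use that each element appears in the patch $\omT{T^m}$ of only a bounded number of elements (shape regularity), so the sum of patch norms is bounded by $\|\nabla v_h\|_{\tilde\O_{h,+}^m}^2$, which gives \eqref{eq:proj:b}. The final inverse-inequality step, $h\|\nabla v_h\|_{\tilde\O_{h,+}^m} \lesssim \|v_h\|_{\tilde\O_{h,++}^m}$... — here one has to be slightly careful that the inverse inequality is applied to the mapped polynomial space, which is legitimate by the norm equivalences \eqref{eq:normequiv} and \Cref{propertiesdh}, and that one absorbs the mean-subtraction trick already used in \Cref{lem:projstab} to avoid losing a power of $h$; but since the statement only claims $\lesssim h\|v_h\|_{\tilde\O_{h,+}^m}$ one may also argue more crudely.

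The main obstacle I expect is the Oswald-averaging step: controlling the averaging error requires showing that the discontinuous function $\tilde v_{\Thn{n}}$ has small jumps, and these jumps mix contributions from \emph{different} deformed elements $T_1^m \ne T_2^m$ coming from neighbours $\hat T_1 \ne \hat T_2$ in the undeformed mesh. One must argue that on the shared facet the two element-local extensions $v_{T_1^m}^*$ and $v_{T_2^m}^*$ both reproduce the continuous function $v_h \in \Vhn{m}$ up to $\O(h^2)$ Taylor remainders and up to the discrepancy between the extensions of $v_h$ from two distinct mapped-polynomial pieces — which themselves agree on $T_1^m \cap T_2^m$ but not on the extended overlap region; bounding this discrepancy by $h^2\|\nabla v_h\|$ on the patch is the technical heart of the lemma, and is exactly where \Cref{lemma:dmax} (smallness of $(T^n\cup T^m)\setminus \hat T_{-\eps}$) and \Cref{propertiesdh} are used.
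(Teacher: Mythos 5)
Your proposal is correct and follows essentially the same route as the paper: reduce to the $O(h^2)$ displacement $|\hat x_i - \hat y_i|\lesssim h^2$ of the evaluation nodes, apply a mean-value/Taylor bound to the extended polynomial, convert via the norm equivalences, and sum using finite overlap. The only cosmetic difference is that the paper collapses your two error sources into a single nodal identity --- since $v_h\in\V_h^{m}$ is single-valued at the old Lagrange nodes, the Oswald average of the \emph{unshifted} nodal values is exact, so the entire error is the shift error you estimate first; your separate jump-based bound for the averaging step reduces to precisely this observation, which you correctly identify in your closing paragraph.
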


\begin{proof}
	Let $T^n = \Theta_{T}^n(\hat{T}) \in \T_{h}^n,~ T^m = \Theta_{T}^m(\hat{T}) \in \T_{h}^m$ for $\hat{T} \in \Th$. 
	With \Cref{lem:shiftednorm} and \Cref{lem:projstab} we have 
	\begin{equation*}
		\| (\operatorname{id} - \Pin{n}) v_h \|_{T^n} \lesssim h^{\frac{d}{2}} \| (\operatorname{id} - \Pin{n}) v_h \|_{\L^{\infty}(\Tneps{n}{-\eps})} + h^{\frac52} \| \nabla  v_h \|_{\omT{T^m}}.
	\end{equation*}
	\extendedonly{We start with bounding the $\L^{\infty}$ norm. }For $\forall x \in \Tneps{n}{-\eps} \subset T^n \cap T^m$ and $\hat{x} := \Theta_T^{-n}(x)$, 
    we have by definition
	\begin{equation*}
	(v_h - \Pin{n} v_h)(x) =             
	\sum_{i=1}^{{M_{\LagL}}}
	\Big(
	\# \homega{x_i^n}^{-1}
	\sum_{\tilde{T} \in \homega{x_i^n}}
	\big( \hat{v}_{\tilde{T}}^*(\hat{x}_i) - \hat{v}_{\tilde{T}}^*(\yihat) \big)
	\Big) \hat{\varphi}_i(\hat{x})
	\end{equation*}
	for ${M_{\LagL}} = \# \LagL(T^n) \lesssim 1$\extendedonly{ the number of Lagrange nodes on $T^n$ and the element patch w.r.t. the undeformed mesh $\homega{x_i} = \Theta^{-m}(\omT{T^m}) \subset \Th$}. 
	With the second-order boundedness of $\Theta^i,~i\in\{m,n\}$, cf. \Cref{propertiesdh}, we have $| \hat{x}_i - \yihat |  \lesssim h^2$ and hence for every $\tilde{T}$ in the element patch $\homega{\hat{T}}$\extendedonly{ there holds}
	\begin{equation*}
	| \hat{v}_{\tilde{T}}^*(\hat{x}_i) - \hat{v}_{\tilde{T}}^*(\yihat) | \lesssim h^2 \| \nabla \hat{v}_{\tilde{T}}^* \|_{\L^\infty(\hat{T}_\eps)}
	\lesssim h^2 \| \nabla \hat{v}_{\tilde{T}} \|_{\L^\infty(\hat{T})} \lesssim h^{2-\tfrac{d}{2}} \| \nabla \hat{v}_{\tilde{T}} \|_{\hat{T}}.
	\end{equation*}
	\extendedonly{Here, we made use of the norm equivalence on finite dimensional spaces and scaling arguments.}
	This yields \eqref{eq:proj:a}. 
	Summing over $\tilde{\T}_{h}^n$ we obtain \eqref{eq:proj:b} by using finite overlap. 
\end{proof}

\begin{lemma}\label{cor:projstab}
Let $\Pin{n}: \V_{h}^m \to \V_{h}^n, ~m=n-1$ be the projection for a discrete function $v_h \in \V_{h}^m$ from the mesh $\T_{h}^m$ to the mesh $\T_{h}^n$.
For constants $c_{\ref{cor:projstab}a}$ and $c_{\ref{cor:projstab}b}$ independent of $h$ and $\Delta t$ there holds
        \begin{align}
          \srn{1}{n}(\Pi^{n} v_h, \Pi^{n} v_h) & \leq 
          c_{\ref{cor:projstab}a} \srn{2}{m}(v_h,v_h) +c_{\ref{cor:projstab}b} h^2 \| \nabla v_h \|^2_{\O_{h,2}^{n-1,\mathcal{S}}} . \label{eq:projstab3}
        \end{align}
\end{lemma}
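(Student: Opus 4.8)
The plan is to follow the construction of $\Pin{n}$ step by step and to see which of the three building blocks (element-local extension, shifted evaluation, Oswald averaging) contributes what to the ghost penalty of $\Pin{n}v_h$. Write $m:=n-1$, $w:=\Pin{n}v_h\in\Vhn{n}$ and let $\tilde v:=\tilde v_{\Thn{n}}\in\bigoplus_{T^n}\Vhn{n}|_{T^n}$ be the discontinuous intermediate produced by the shifted-evaluation step of \Cref{sec:shiftedeval}; for a mapped polynomial on $T_i^n$ write $(\cdot)^{\mathrm{ext}}=\Ep(\cdot\circ\Theta_{T_i}^n)\circ\Theta_{T_i}^{-n*}$ for its canonical extension, as in \eqref{eq:ghostpenalty}. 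On a facet $F^n\in\Frn{1}{n}$ with neighbours $T_1^n,T_2^n$ (corresponding to $\hat T_1,\hat T_2\in\Th$ sharing $\hat F$ and to $T_1^m,T_2^m\in\Thn{m}$) I would split the polynomial jump entering $\srn{1}{n}$ as
\[
 w_1-w_2=\big(w_1-\tilde v_{T_1^n}^{\mathrm{ext}}\big)-\big(w_2-\tilde v_{T_2^n}^{\mathrm{ext}}\big)+\big(\tilde v_{T_1^n}^{\mathrm{ext}}-\tilde v_{T_2^n}^{\mathrm{ext}}\big),
\]
apply the triangle inequality under $\tfrac1h\|\cdot\|_{\omF{F^n}}$ and square, so that it suffices to bound separately (a) $\sum_{F^n\in\Frn{1}{n}}\tfrac1{h^2}\|\tilde v_{T_1^n}^{\mathrm{ext}}-\tilde v_{T_2^n}^{\mathrm{ext}}\|^2_{\omF{F^n}}$, the ghost penalty of the discontinuous intermediate, and (b) the Oswald correction $\srn{1}{n}(w-\tilde v,w-\tilde v)$.

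For (a) I would compare, on $\omF{F^n}$, each $\tilde v_{T_i^n}^{\mathrm{ext}}$ with $v_{T_i^m}^{*}$, the smooth extension of $v_h|_{T_i^m}$ from \Cref{sec:ext}: these two mapped polynomials agree up to the shifted-evaluation mismatch $|\hat x_j-\yihat|\lesssim h^2$ of \Cref{propertiesdh}, so repeating verbatim the estimate from the proof of \Cref{lemma:proj} (pass to $\L^\infty$ on the extended reference element, use the scaling equivalences \eqref{eq:normequiv}, map back) gives $\tfrac1h\|\tilde v_{T_i^n}^{\mathrm{ext}}-v_{T_i^m}^{*}\|_{\omF{F^n}}\lesssim h\,\|\nabla v_h\|_{\omT{T_i^m}}$. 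What then remains is the genuine jump $v_{T_1^m}^{*}-v_{T_2^m}^{*}$: by definition $\tfrac1{h^2}\|v_{T_1^m}^{*}-v_{T_2^m}^{*}\|^2_{\omF{F^m}}$ \emph{is} $s_{F^m}^m(v_h,v_h)$, so the only task is to replace the integration domain $\omF{F^n}$ by $\omF{F^m}$. This I would do as in \Cref{lem:shiftednorm}: split off the common interior part $\omF{F^n}\cap\omF{F^m}$ and estimate the thin band $\omF{F^n}\setminus\omF{F^m}$, which by \Cref{lemma:dmax} has measure $\lesssim h^{d+1}$, through its $\L^\infty$ norm plus a gradient term; since $v_{T_i^m}^{*}$ are individually mapped polynomials, their $\L^\infty$ norms on the band are controlled by \eqref{eq:normequiv}, and the resulting remainder is absorbed into $h^2\|\nabla v_h\|^2$. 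This yields $\tfrac1{h^2}\|v_{T_1^m}^{*}-v_{T_2^m}^{*}\|^2_{\omF{F^n}}\lesssim s_{F^m}^m(v_h,v_h)+h^2\|\nabla v_h\|^2_{\omF{F^m}}$.

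For the Oswald correction (b) I would use the defining averaging of $P_h$: at a Lagrange node $x_j^n$ the value $(w-\tilde v_{T_1^n})(x_j^n)$ is a mean over $T'\in\omx{x_j^n}$ of differences $\tilde v_{T'}(x_j^n)-\tilde v_{T_1^n}(x_j^n)$, i.e.\ of jumps of $\tilde v$ over facets inside the element patch $\omT{T_1^n}$; by finite-dimensionality of the polynomial spaces, norm equivalence and \Cref{lemma:dmax} (exactly the mechanism of \Cref{lem:projstab} and \Cref{lem:shiftednorm}) these are controlled by the very same quantities estimated in (a) for the neighbouring facets, so (b) obeys the same bound. Finally I would do the index bookkeeping: because $\dd\ge\dt\|\w\|_{\L^\infty}$ (so $\Omhn{n}\subset\Omdn{\dd}{m}$ up to the $\O(h^2)$ mesh deformation) and consecutive deformed meshes differ by $\O(h^2)$ (\Cref{propertiesdh}), every $F^n\in\Frn{1}{n}$ has its partner $F^m$ in $\Frn{2}{m}$ and all invoked element patches lie in $\O_{h,2}^{n-1,\mathcal{S}}$; summing over $\Frn{1}{n}$ with the finite overlap of patches then gives \eqref{eq:projstab3}.

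I expect the main obstacle to be precisely this last paragraph combined with the band estimate in (a): one must (i) check carefully, using the motion bound $\dd\ge\dt\|\w\|_{\L^\infty}$ and the $\O(h^2)$ closeness of $\Thn{m},\Thn{n}$, that the whole collection of facets and element patches touched at time $n$ maps back into $\Frn{2}{m}$ and $\O_{h,2}^{n-1,\mathcal{S}}$; and (ii) carry out the comparison of the integration domains $\omF{F^n}$ versus $\omF{F^m}$ while preserving the cancellation in $v_{T_1^m}^{*}-v_{T_2^m}^{*}$ — which forces one to route everything through $\L^\infty$ estimates of individual mapped polynomials and scaling, rather than a single norm equivalence applied to the difference, exactly as the footnote in \Cref{lem:shiftednorm} warns. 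The factor $h^2$ in the remainder of \eqref{eq:projstab3} is what makes room for both the shifted-evaluation errors and these band errors.
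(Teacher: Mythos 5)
Your plan is workable, but it follows a genuinely different route from the paper's proof. The paper never introduces the discontinuous intermediate of the shifted-evaluation step and never treats the Oswald averaging as a separate contribution; instead it pulls \emph{both} ghost-penalty terms back to the same undeformed patch $\hat T_1\cup\hat T_2$ (so the comparison of the integration domains $\omF{F^n}$ and $\omF{F^m}$, which you identify as your main obstacle, never arises), and then uses the elementary identity $a-b=A-B$ with $a=\hat u_i-\hat u_j^*$, $b=\hat v_i-\hat v_j^*$, $A=\hat u_i-\hat v_i$, $B=\hat u_j^*-\hat v_j^*$ to get $a^2\le 3b^2+4(A^2+B^2)$. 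Here $A$ and $B$ compare the pullback of $\Pin{n}v_h$ via $\Theta^n$ with the pullback of $v_h$ via $\Theta^m$ on one and the same reference element, and are bounded by $h^2\Vert\nabla v_h\Vert_{\omT{T^m}}$ using \Cref{lemma:proj} as a black box together with $\Vert\Theta^n-\Theta^m\Vert_\infty\lesssim h^2$. This buys a much shorter argument in which the cancellation between the two jumps is handled once, by the identity $a-b=A-B$; your decomposition buys a more transparent accounting of what each of the three stages of the projection contributes, at the price of exactly the band estimates and bookkeeping you flag.

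Two places in your write-up would fail if taken literally. First, in (a), on the band $\omF{F^n}\setminus\omF{F^m}$ you may \emph{not} control $\Vert v_{T_1^m}^*-v_{T_2^m}^*\Vert_{\L^\infty}$ through the $\L^\infty$ norms of the individual mapped polynomials via \eqref{eq:normequiv}: that destroys the cancellation and leaves, after the $h^{-2}$ prefactor, a term of order $h^{-1}\Vert v_h\Vert^2$ that cannot be absorbed into $h^2\Vert\nabla v_h\Vert^2$. As in \Cref{lem:shiftednorm}, the zeroth-order part must be carried back to the common deflated region with the difference intact (e.g.\ by composing with $\Theta_{T_1}^{m*}$ and absorbing the $\mathcal{O}(h^2)$ mismatch of the two maps into a gradient term); only the gradient contributions, which carry the extra factor $\eps\simeq h^2$ from \Cref{lemma:dmax}, may be split. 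Your closing paragraph shows awareness of this, but the body of the argument reads as if the individual norms suffice. Second, in (b), the nodal differences $\tilde v_{T'}(x_j^n)-\tilde v_{T_1^n}(x_j^n)$ should be bounded directly by $h^2\Vert\nabla v_h\Vert_{\L^\infty}$ using the continuity of $v_h$ on $\Thn{m}$ and $|x_j^n-x_j^m|\lesssim h^2$, not by ghost-penalty terms of neighbouring facets: the latter would require all facets inside the node patches to lie in $\Frn{2}{m}$, which need not hold for facets between interior elements.
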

\begin{proof}[Proof (sketch)]
  The proof relies on the application of an estimate of triangle inequality type for 
  each involved facet, $s_{F}^{n}(\Pin{n}v_h,\Pin{n}v_h) \lesssim s_{F}^{m}(v_h,v_h) + \Vert \Pin{n} v_h - v_h \Vert_{\omF{F}}^2$, and the estimates from \Cref{lemma:proj}. The details are technical and given only in the appendix for completeness,  cf. \Cref{proofs}.
\end{proof}

The previous lemmas describe "the worst case" scenarios as $v_h$ is allowed to be arbitrarily rough in $\V_h^m$. Assuming more smoothness helps to improve the bound.
\begin{lemma} \label{lemma:proj2}
  Let $\Pin{n}: \Vhn{m} \to \Vhn{n},~m=n-1$ be the projection for a discrete function $v_h \in \Vhn{m}$ from the mesh $\Thn{m}$ to the mesh $\Thn{n}$. For any $\hat{T} \in \Th$ and $T^n = \Theta^n(\hat{T}) \in \Thn{n}$, $v \in \W_\infty^{k+1}(\Tneps{n}{\eps}) \cap C^0(\omT{T^n})$ and \hypertarget{def:Ihn}{$\I^p_h$ the Lagrange interpolation w.r.t. $\Thn{p}, ~p \in \{m,n\}$}, there holds
		\begin{align} \label{eq:proj2 elem}
                  \| (\Ihn{n} - \Pin{n} \Ihn{m}) v \|_{T^n} & \lesssim \| \Theta^n - \Theta^m \|_{\L^{\infty}(\homega{\hat{T}})} ~ h^{k+\frac{d}{2}} |v|_{\W_\infty^{k+1}(T^n_\eps)}.
    \end{align}
\end{lemma}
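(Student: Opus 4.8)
The plan is to peel off the Oswald‑averaging step, reduce to a node‑wise estimate on a patch, and recognise the surviving nodal difference as an interpolation error evaluated at a slightly shifted point. Set $w:=\Ihn{m}v\in\Vhn{m}$. Since $v$ is continuous, $\Ihn{n}v$ is a continuous finite element function, hence reproduced by the Oswald operator $P_h$, while by construction $\Pin{n}\Ihn{m}v=P_h\tilde w$ with $\tilde w$ the element‑wise shifted interpolant of $w$ from \Cref{sec:shiftedeval}. Therefore $(\Ihn{n}-\Pin{n}\Ihn{m})v=P_h(\Ihn{n}v-\tilde w)$, and since $P_h$ is a nodal averaging operator which is $\L^\infty$‑stable with finite overlap, it is enough to bound, uniformly over the elements $T'$ of the (mapped) patch $\omT{T^n}$ and over the Lagrange nodes $x_i^n$ of $T'$, the nodal difference $v(x_i^n)-w_{{T'}^m}^*(x_i^n)$, where $w_{{T'}^m}^*$ is the element‑local smooth extension of $w|_{{T'}^m}$ from \Cref{sec:ext}; the $\L^2(T^n)$‑norm is recovered with an extra factor $h^{d/2}$ (and for vertex nodes and for $k=1$ the difference is seen below to vanish outright).

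The crucial point is that $e_{T'}:=v-w_{{T'}^m}^*$ is precisely the (extended) nodal interpolation error of $v$ on the mapped element ${T'}^m\in\Thn{m}$, hence vanishes at each mapped Lagrange node $x_j^m:=\Theta_{T'}^m(\hat x_j)$. In particular $e_{T'}(x_i^m)=0$ for the node $x_i^m$ attached to the same reference node $\hat x_i$ as $x_i^n=\Theta_{T'}^n(\hat x_i)$, so
\[
|e_{T'}(x_i^n)|=|e_{T'}(x_i^n)-e_{T'}(x_i^m)|\le|x_i^n-x_i^m|\;\|\nabla e_{T'}\|_{\L^\infty(\operatorname{conv}\{x_i^n,x_i^m\})}.
\]
The first factor is $|\Theta_{T'}^n(\hat x_i)-\Theta_{T'}^m(\hat x_i)|\le\|\Theta^n-\Theta^m\|_{\L^\infty(\homega{\hat T})}$, which is exactly the prefactor in the claim; it is $0$ for vertex nodes by \eqref{resd3}, so for $k=1$ the whole left‑hand side is zero. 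Moreover $\|\Theta^n-\id\|_\infty+\|\Theta^m-\id\|_\infty\lesssim h^2\simeq\eps$ by \eqref{resd4}, so the segment $\operatorname{conv}\{x_i^n,x_i^m\}$ stays inside the $\eps$‑neighbourhood of ${T'}^m$ on which $e_{T'}$ is defined.

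It remains to prove the scaled interpolation estimate $\|\nabla e_{T'}\|_{\L^\infty}\lesssim h^{k}\,|v|_{\W_\infty^{k+1}}$ over that $\eps$‑neighbourhood. I would pull back to the undeformed element $\hat T'$ through $\Theta_{T'}^{m*}$ — whose Jacobian is uniformly bounded and invertible by \eqref{resd4}, and whose derivatives satisfy $\|D^j(\Theta_{T'}^{m*}-\id)\|_\infty\lesssim h^{2-j}$ by polynomial inverse estimates (and vanish for $j>q=k$) — reducing to bounding $\|\nabla(\hat g-\Ep\,\hat{\I}^k\hat g)\|_{\L^\infty(\hat T'_{\eps})}$ with $\hat g:=v\circ\Theta_{T'}^{m*}$ and $\hat{\I}^k$ the reference Lagrange interpolant on $\hat T'$. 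On $\hat T'$ this is the standard estimate $\lesssim h^k|\hat g|_{\W_\infty^{k+1}(\hat T')}$ (using $k\ge1$); the thin band $\hat T'_{\eps}\setminus\hat T'$ is covered by inserting a degree‑$k$ Taylor polynomial $\Ty\hat g$ centred in $\hat T'$, for which $\|\nabla(\hat g-\Ty\hat g)\|_{\L^\infty(\hat T'_{\eps})}\lesssim h^k|\hat g|_{\W_\infty^{k+1}(\hat T'_{\eps})}$ because $\operatorname{diam}\hat T'_{\eps}\lesssim h$, and by bounding the fixed degree‑$k$ polynomial $\Ty\hat g-\Ep\,\hat{\I}^k\hat g$ on $\hat T'_{\eps}$ by its $\W_\infty^1(\hat T')$‑norm via polynomial norm equivalence (admissible since $\eps\ll h$). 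Finally the chain rule and the smallness of $\Theta_{T'}^{m*}-\id$ give $|\hat g|_{\W_\infty^{k+1}(\hat T'_{\eps})}\lesssim|v|_{\W_\infty^{k+1}({T'}^m_{\eps})}$ up to lower‑order terms that carry strictly higher powers of $h$, and ${T'}^m_{\eps}\subset\Tneps{n}{c\eps}$ by \Cref{lemma:dmax}; collecting the three bounds completes the argument.

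The step I expect to be the main obstacle is this last interpolation estimate on the \emph{enlarged} undeformed element $\hat T'_{\eps}$: since the evaluation point $x_i^n$ lies on the mesh $\Thn{n}$ and not on $\Thn{m}$, it can sit just outside ${T'}^m$, so the plain interpolation bound — which only controls the error inside the element on which the interpolant is defined — does not apply, and the Taylor‑polynomial detour together with polynomial norm equivalence on $\hat T'_{\eps}$ is the device that repairs this. A secondary technicality, needed to make everything rigorous under the stated regularity $v\in\W_\infty^{k+1}(\Tneps{n}{\eps})\cap C^0(\omT{T^n})$, is the bookkeeping that every evaluation point, connecting segment and patch element $T'\in\omT{T^n}$ that enters through $P_h$ lies within the neighbourhood of $T^n$ on which $v$ is smooth; this rests on $\eps\simeq h^2$ and the inclusion relations of \Cref{lemma:dmax}, with the constant in $\eps$ treated with a little slack.
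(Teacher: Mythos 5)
Your proposal is correct and shares the paper's overall architecture -- strip off the Oswald averaging, reduce to the nodal values of the $\Thn{m}$-interpolation error at the shifted nodes $x_j^n$, and exploit that this error vanishes at the unshifted nodes $x_j^m$ to extract one factor of $\| \Theta^n - \Theta^m\|_{\L^\infty(\homega{\hat{T}})}$ -- but the device used for the key nodal estimate is genuinely different. The paper splits $(v-\I^m_{\tilde T^m_\eps}v)(x_j^n)$ via the mapped Taylor polynomial $\Ty_{x_j^m}v$ (which the local interpolant reproduces), bounding the first piece by $|x_j^n-x_j^m|^{k+1}|v|_{\W^{k+1}_\infty}$ and the second by combining $|(\Ty_{x_j^m}v-v)(x_i^m)|\lesssim h^{k+1}|v|_{\W^{k+1}_\infty}$ with the Lipschitz bound $|\varphi_i^{m*}(x_j^n)|\lesssim h^{-1}\|\Theta^n-\Theta^m\|_{\infty}$ on the extended basis functions; the two pieces recombine to $\|\Theta^n-\Theta^m\|_\infty\, h^k$ only after additionally invoking $\|\Theta^n-\Theta^m\|_\infty\lesssim h$. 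You instead apply the mean value theorem to the interpolation-error function itself, $|e_{T'}(x_i^n)|\le|x_i^n-x_i^m|\,\|\nabla e_{T'}\|_{\L^\infty}$, and then prove a $\W^1_\infty$ interpolation estimate on the $\eps$-inflated mapped element. This is arguably cleaner (a single term, and the factorization $\|\Theta^n-\Theta^m\|_\infty\cdot h^k$ appears directly), at the price of having to establish the gradient interpolation bound on the enlarged curved element -- for which you correctly reintroduce the same Taylor-polynomial-plus-polynomial-norm-equivalence machinery the paper uses pointwise, together with the isoparametric chain-rule bookkeeping via \eqref{resd4} and inverse estimates. Both arguments need $\W^{k+1}_\infty$-regularity of $v$ on the full (inflated) element patch rather than only on $\Tneps{n}{\eps}$ as literally written in the lemma; you flag this, and the paper's proof silently has the same slight mismatch, so it is not a gap relative to the reference.
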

\begin{proof}
  Let $\varphi_i^p := \hat{\varphi}_i \circ \Theta_T^{-p}$ be the Lagrange basis functions of $\Vhn{p}|_{T^p}$ w.r.t. the Lagrange nodes $x_i^p := \Theta_T^p(\hat{x}_i)$ on $T^p \in \Thn{p}$ such that $\varphi_i^p(x_j^p) = \delta_{ij}, ~i,j=1,...,{M_{\LagL}}:=\# \LagL(T^p)$.
Analogously to the local interpolation operator $\I_{T^p}^p$ on $T^p$ we define the nodal interpolation operator on the extension  $T^p_{\eps}$ of $T^p$, i.e. for $p\in\{m,n\}$ such that
\begin{equation*}
\I_{T^p}^p v (x) := \sum_{i=1}^{M_{\LagL}} v (x_i^p) \varphi_i^p (x), \quad\text{and}\quad
\I_{\Tneps{p}{\eps}}^p v (x) := \sum_{i=1}^{M_{\LagL}} v(x_i^p) \varphi_i^{p*} (x), \qquad v \in \L^\infty(T^p),
\end{equation*}
where $(\cdot)^* = \Ep \cdot$ canonically extends the basis functions $\varphi_i^{p}$ on $T^p \in \Thn{p}$ to $\Tneps{p}{\eps}$.
With the definition of the projection $\Pin{n}$, cf. \Cref{sec:locproj}, we have for $x \in T^n$
	\begin{equation*} 
		\left( \Ihn{n} v \!-\! \Pin{n} \Ihn{m} v \right) (x) 
		\!=\! \sum_{j=1}^{M_{\LagL}} \bigg( \! \#\omx{x_i^m}^{-1} \!\!\!\!\!\sum_{\tilde{T}^m \in \omx{x_i^m}} \!\!\!\!\!\! \Big( (v-\I_{\tilde{T}^{m}_{\eps}}^m v) (x_j^n) \Big) \! \bigg) \varphi_j^n(x)
	\end{equation*}
for any $v \in C^0(\omT{T^m})$. 
Taking the $\L^2$-norm on $T^n \in \Thn{n}$ then this yields
\begin{equation} \label{eq:proj2 bound}
\left\| \Ihn{n} v - \Pin{n} \Ihn{m} v \right\|_{T^n}
\lesssim h^{\frac{d}{2}} \max_{\tilde{T}^m \in \omT{T^m}} \left\|v - \I_{\tilde{T}^m_{\eps}}^m v \right\|_{l_\infty(\LagL(T^n))}
\end{equation}
where we made use of $\|\sum_{j=1}^{M_{\LagL}} \varphi_j^n\|_{T^n} \lesssim h^{\frac{d}{2}}$.
\hypertarget{def:Taylor}{Let the Taylor polynomial of degree $k$ that expands a function $\hat{v}$ at node $\hat{x}_j^m$ be denoted by $\hat{\mathfrak{T}}_{\hat{x}_j^m}$.}
We introduce the mapped Taylor polynomial (to the element $\tilde{T}$ in the patch $\omT{T^m}$) by transformation to the straight element applying the Taylor expansion there and transforming back, i.e., as $\Ty_{x_j^m} v := \Big( \hat{\Ty}_{\hat{x}_j^m} \big( v \circ \Theta_{\tilde{T}}^{m*} \big) \Big) \circ \Theta_{\tilde{T}}^{-m*}$ such that there holds
 $\Ty_{x_j^m} v = \I_{\tilde{T}^m_{\eps}}^m \Ty_{x_j^m} v$. For $j=1,...,{M_{\LagL}}$ we then have
\begin{align} \label{eq:seq bound}
\left| (v - \I_{\tilde{T}^m_{\eps}}^m v) (x_j^n) \right|
&= \left| (v - \Ty_{x_j^m} v)(x_j^n) + (\Ty_{x_j^m} v - \I_{\tilde{T}^m_{\eps}}^m v)(x_j^n) \right| \\
&\le \left| (v - \Ty_{x_j^m} v)(x_j^n) \right| + \left| \I_{\tilde{T}^m_{\eps}}^m (\Ty_{x_j^m} v - v)(x_j^n) \right| \nonumber
\end{align}
For the first part we simply have with $x_j^n = \Theta_{\tilde{T}}^n(\hat{x}_j)$, $x_j^m = \Theta_{\tilde{T}}^m(\hat{x}_j)$ and $x_j^n, x_j^m \in T^n_\eps$: 
\begin{align*}
 \left| (v - \Ty_{x_j^m} v)(x_j^n) \right| \lesssim  |x_j^n - x_j^m|^{k+1} |v|_{\W_\infty^{k+1}(T^n_\eps)}  \lesssim  \|\Theta_{\tilde{T}}^{n} - \Theta_{\tilde{T}}^{m}\|_{\L^\infty(\hat{T})}^{k+1} |v|_{\W_\infty^{k+1}(T^n_\eps)}. 
\end{align*}
For the second part we exploit $(\Ty_{x_j^m} v - v)(x_j^m) = 0$ to obtain \vspace*{-0.3cm}
\begin{align*} 
 & \big| \I_{\tilde{T}^m_{\eps}}^m (\Ty_{x_j^m} v - v)(x_j^n) \big| \!=\! \big|
  \sum_{i=1}^{M_{\LagL}} (\Ty_{x_j^m} v - v)(x_i^m)  \varphi_i^{m*} (x_j^n) \big| \!=\!
\big| \sum_{i\neq j} (\Ty_{x_j^m} v - v)(x_i^m)  \varphi_i^{m*} (x_j^n) \Big|                                
\\
               &\lesssim \max_{i\neq j} \big| \varphi_i^{m*} (x_j^n) \big| \max_{i\neq j} \big| (\Ty_{x_j^m} v - v)(x_i^m) \big|
                 \lesssim \max_{i\neq j} \big| \varphi_i^{m*} (x_j^n) \big| \underbrace{|x_i^m - x_j^m|^{k+1}}_{\lesssim h^{k+1}} |v|_{\W_\infty^{k+1}(T^n_\eps)}. \end{align*}\vspace*{-0.6cm}\\
Finally we bound $\left| \varphi_i^{m*} (x_j^n) \right|$ for $i\neq j$ by using $\varphi_i^{m*} (x_j^m) = 0$:
\begin{align*}
  \! \big| \varphi_i^{m*}\! (x_j^n) \big| \!=\!
  \big| \varphi_i^{m*}\! (x_j^n)\! -\! \varphi_i^{m*}\! (x_j^m) \big|  
  \! \lesssim \!   \| \nabla \varphi_i^{m*} \|_{\L^\infty(\hat{T}_\eps)} \| x_j^n \! - \! x_j^m
  \| \! \lesssim \! h^{-1} \|\Theta_{\tilde{T}}^{n} \!-\! \Theta_{\tilde{T}}^{m}\|_{\L^\infty(\hat{T})}\!.\!
\end{align*}
Altogether, by
summing over all $\tilde{T}^m$ in $\omT{T^m}$ we obtain the claim \eqref{eq:proj2 elem}. 
\end{proof}

\section{A priori error analysis}
\label{sec:ana}
The analysis follows a similar strategy as in \cite{LO_ESAIM_2019} and where possible, we will refer to the corresponding results. In the analysis of the method we will only treat the case of a BDF2 discretization, i.e., we fix $r=2$. Necessary changes compared to \cite{LO_ESAIM_2019} occur due to the projection $\Pimn{m}{n}$ and the second-order time difference stencil $\dtBDFb$, c.f. \Cref{sec:fullhotime}. 
To deal with the BDF2 scheme in the following stability analysis we define a suitable norm and make a simple but useful observation, cf. \cite[Lemma 6.33]{Ern04}.
\begin{lemma}\label{lem:bdfnorm}
Let $u_h, v_h, w_h \in \mathcal{V}_{h,2}^{n}$. 
For $S \subseteq \Orn{2}{n}$ we define the BDF2 tuple norm 
  \begin{equation} \label{eq:bdf2norm}
   \| (w_h,v_h) \|_{S}^2 := \| w_h \|_{S}^2 + \| 2 w_h -  v_h \|_{S}^2 \text{ with } \| (w_h,v_h) \|_{S}^2 \leq 6 ( \Vert w_h \Vert_S^2 + \Vert v_h \Vert_S^2)
  \end{equation}
so that for $\dtBDFr{2}$ as in \eqref{eq:stencil:BDF2} there holds
  \begin{align}\label{eq:bdf2normest}
    \big( 4 \Delta t \dtBDFr{2}(w_h,v_h,u_h),w_h\big)_{\Omhn{n}}
    \ge \| (w_h,v_h) \|_{\Omhn{n}}^2 - \| (v_h, u_h) \|_{\Omhn{n}}^2.
  \end{align}
\end{lemma}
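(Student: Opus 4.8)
The plan is to verify the tuple-norm identity directly from the BDF2 stencil formula. First I would write out $4 \Delta t \, \dtBDFr{2}(w_h, v_h, u_h) = 2(3 w_h - 4 v_h + u_h)$ using \eqref{eq:stencil:BDF2} (note the $2\Delta t$ in the denominator cancels against the factor $4\Delta t$ to give the factor $2$), and then take the $\L^2(\Omhn{n})$ inner product with $w_h$, so that the left-hand side becomes $2\,(3 w_h - 4 v_h + u_h, w_h)_{\Omhn{n}}$. The goal is to show this is bounded below by $\| w_h \|^2 + \| 2 w_h - v_h \|^2 - \| v_h \|^2 - \| 2 v_h - u_h \|^2$, all norms over $\Omhn{n}$.

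The key algebraic step is the classical BDF2 ``G-norm'' / telescoping trick. I would expand the target right-hand side: $\| 2 w_h - v_h \|^2 = 4\|w_h\|^2 - 4(w_h,v_h) + \|v_h\|^2$ and $\|2v_h - u_h\|^2 = 4\|v_h\|^2 - 4(v_h,u_h) + \|u_h\|^2$, so that
\begin{equation*}
\| w_h \|^2 + \| 2 w_h - v_h \|^2 - \| v_h \|^2 - \| 2 v_h - u_h \|^2 = 5\|w_h\|^2 - 4(w_h,v_h) - 4\|v_h\|^2 + 4(v_h,u_h) - \|u_h\|^2.
\end{equation*}
On the other side, $2(3w_h - 4v_h + u_h, w_h) = 6\|w_h\|^2 - 8(w_h,v_h) + 2(u_h,w_h)$. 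Subtracting, the difference (left minus right) equals $\|w_h\|^2 - 4(w_h,v_h) + 2(u_h,w_h) + 4\|v_h\|^2 - 4(v_h,u_h) + \|u_h\|^2$, which I would recognize as the perfect square $\| w_h - 2 v_h + u_h \|^2 \ge 0$. That establishes \eqref{eq:bdf2normest} with the nonnegative remainder made explicit.

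There is essentially no hard part here — this is a finite-dimensional identity in the Hilbert space $\L^2(\Omhn{n})$, and the only thing to be careful about is that $u_h, v_h, w_h$ all lie in $\mathcal{V}_{h,2}^{n}$ so that the inner products over $\Omhn{n} \subseteq \Orn{2}{n}$ are well-defined (this is why the hypothesis fixes all three functions in the same space associated to time level $n$, even though in the application they originate from different time levels and have been transported by the projection $\Pimn{m}{n}$). I would close by remarking that the computation shows not merely the inequality but the sharp identity
\begin{equation*}
\big( 4 \Delta t\, \dtBDFr{2}(w_h,v_h,u_h), w_h \big)_{\Omhn{n}} = \| (w_h,v_h) \|_{\Omhn{n}}^2 - \| (v_h,u_h) \|_{\Omhn{n}}^2 + \| w_h - 2 v_h + u_h \|_{\Omhn{n}}^2,
\end{equation*}
from which \eqref{eq:bdf2normest} is immediate. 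If one prefers, the same conclusion follows by citing \cite[Lemma 6.33]{Ern04} as the lemma statement already suggests, but I would include the three-line direct verification for self-containedness.
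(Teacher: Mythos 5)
Your computation is correct and matches the paper's own proof, which likewise multiplies out the terms to obtain the exact identity with the nonnegative remainder $\| w_h - 2 v_h + u_h \|_{\Omhn{n}}^2$ (citing the same reference). The only difference is that you spell out the algebra that the paper leaves implicit.
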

\extendedonly{
\begin{proof}
	Multiplying the terms out we find, cf. \cite[Lemma 6.33]{Ern04}:
  	\begin{align*}
    	\big( 4 \Delta t \dtBDFr{2}(w_h, v_h, u_h), w_h \big)_{\Omhn{n}} = & \hphantom{-} \| w_h \|_{\Omhn{n}}^2 + \| 2 w_h -  v_h \|_{\Omhn{n}}^2 \\[-3ex] & - \| v_h \|_{\Omhn{n}}^2 - \| 2 v_h -  u_h \|_{\Omhn{n}}^2
    	+ \overbrace{\| w_h - 2 v_h + u_h \|_{\Omhn{n}}^2}^{\geq 0}.
  	\end{align*}
\end{proof}
}
Let us note that similar norms with a corresponding estimate can also be defined for BDF3 and BDF4, cf. \cite[Section 2]{liu2013simple}. Similarly, the analysis below can also be transfered to $r=3$ and $r=4$. We however decided to treat only the case $r=2$ to keep the technicalities at a manageable level.

As a consequence of the  errors introduced by the mesh transfer operator, in the subsequent analysis, we make the following assumption on the ratio between space and time refinements.
\newc{
\begin{assumption}\label{ass:hhdt}
  We assume that as $h$ and $\Delta t$ go to zero, $\frac{h^5}{\Delta t^2}$ converges to zero too. Then, for any constant $c_{A\ref{ass:hhdt}} > 0$ there is an $h_0 > 0$ so that for all meshes with mesh size $h < h_0$ we have $\frac{h^5}{\nu \Delta t^2} < c_{A\ref{ass:hhdt}}$.
\end{assumption}
\begin{remark}
\Cref{ass:hhdt} is only a restriction on the efficiency for high order in space. Assuming an $\L^2(0,\finaltime;\H^1(\Omega))$-error bound of the form\footnote{see also the numerical examples for a motivation of this ansatz} $\mathcal{O}(\Delta t^2 + h^k)$. Only for $k > 5$ a scaling $h > \Delta t^{\frac25}$ would be benefitial for the efficiency of the scheme. 
\end{remark}
} 

We will need certain space-time norms, which we will abbreviate, e.g.,   
\hypertarget{def:LinfH}
{
$\L^\infty(\H^k)$
for 
$\L^\infty(0,\finaltime;\H^k(\Omega(t)))$
}
and 
\hypertarget{def:LinfW}
{
$\L^\infty(\W^k_{\infty})$
for 
$\L^\infty(0,\finaltime;\W^k_\infty(\Omega(t)))$
}. 

Furthermore, in the remainder we treat the asymptotic behavior, $h, \dt \to 0$ only, i.e., we implicitly assume $h$ and $\dt$ \emph{sufficiently small} at several occasions. 
\new{We also only consider an $h$-type analysis, assuming the polynomial degree $k$ to be fixed. Especially the constants occuring may in general depend on $k$.}

\subsection{Error splitting and error equation}
Let $u$ be the exact solution to \eqref{eq:eqn}. For $u^n := u(t_n), ~ n = 0,1,...,N$, we make use of the extension operator $\E: \L^2(\Omn{n}) \to \L^2(\Omepsn{n})$  introduced in \cite[Section 3.2.1]{LO_ESAIM_2019}. 
$\E u^n$ is well-defined on $\Omepsn{n} \supset \Om^{n+r}$ and $\E$ is uniformly continuous in standard Sobolev norms. In the following we assume that $\epsilon$ can be chosen sufficiently large so that $\Omepsn{n} \supset \O_{r}^n$ and identify $\E u^n$ with $u^n$.

First, we introduce an error splitting. We define \hypertarget{def:uIn}{$u_I^n := \Ihn{n} u^n \in \Vhn{n}$ the global Lagrange interpolant w.r.t. to $\Thn{n}$} such that the error can be split into
\begin{equation} \label{eq:split}
\errQ{n} := u^n - u_h^n = \erriQ{n} + \errdQ{n}, \quad \erriQ{n} := {u^n - \uIn{n}}, \quad \errdQ{n} := \uIn{n} - u_h^n,
\end{equation}
for the \emph{approximation error} $\erri{n}$ and the \emph{discrete error} $\errd{n}$, respectively. In addition, we split the approximation error after mesh transfer between $\Vhn{m}$ and $\Vhn{n}$:
\begin{equation} \label{eq:split proj}
\errpQ{m} := u^m - \Pimn{m}{n} u_h^m = \erripQ{m} + \Pimn{m}{n} \errd{m}, \quad \erripQ{m} := u^m - \Pimn{m}{n} \uIn{m}.
\end{equation}
\extendedonly{Here, $\errip{m}$ is the corresponding approximation error after projection, and $\Pimn{m}{n} \errd{m}$ is the projection of the corresponding discrete error.}

\hypertarget{def:lift}{We then introduce a lifting operator ${(\cdot)}^\ell: \H^1(\Omhn{n}) \to \H^1(\Omn{n}), ~v_h \mapsto v_h \circ \Theta \circ \PPsi^{-1}$, and $b^n(\cdot,\cdot)$, $f^n(\cdot)$ as the bilinear and linear forms $\bh{n}(\cdot,\cdot)$, $\fhn{n}(\cdot)$ with $\Omhn{n}$, $\Ghn{n}$ replaced by $\Omn{n}$, $\Gn{n}$.}
For the exact solution to \eqref{eq:eqn} there holds for $n = 0,1,...,N$ 
\begin{equation*}
  \int_{\Omn{n}} \partial_t u^n v ~dx + b^n(u^n,v) = f^n(v) \qquad \forall v \in \H^1(\Omn{n}),
\end{equation*}
from which we subtract \eqref{eq:disceq} to obtain the error equation
\begin{equation} \label{eq:erroreq}
  \int_{\Omhn{n}} \dtBDFr{2}(\err{n},\errp{n-1},\errp{n-2}) ~v_h ~dx + \bh{n}(\err{n},v_h)
  + \gamma s_{r}^n(\err{n},v_h) = \consistQ{n}(v_h)
\end{equation}
with the consistency error term that we decompose into four contributions
\begin{align} \label{eq:consist}
  \consistQ{n}(v_h) := & \overbrace{\int_{\Omhn{n}} \dtBDFr{2}(u^n,u^{n-1},u^{n-2}) ~v_h ~dx - \int_{\Omn{n}} \partial_t u^n v_h^{\lift} dx}^{\consist{n,1}(v_h)} \\ 
  & + \underbrace{\bh{n}(u^n,v_h) - b^n(u^n,v_h^{\lift})}_{\consist{n,2}(v_h)} + \underbrace{\gamma s_{r}^n (u^n, v_h)}_{\consist{n,3}(v_h)}
  + \underbrace{f^n (v_h^{\lift}) - f_{h}^n (v_h)}_{\consist{n,4}(v_h)}. \nonumber 
\end{align}
Further splitting $\err{n}$ and $\errp{n-1},\errp{n-2}$ in the error equation yields
\begin{align} \label{eq:discerreq}
  \int_{\Omhn{n}} \!\!\! \dtBDFr{2}(\errd{n},\Pin{n} \errd{n-1}\!\!,\Pimn{n-2}{n} \errd{n-2}) v_h dx \!+\! \bh{n}(\errd{n},v_h \!) & \!+\! \gamma s_{r}^n(\errd{n},v_h\!) 
  \!=\! \consist{n}(v_h) \!+\! \interpolQ{n}(v_h) \vspace*{-0.4cm}
\end{align}
with the interpolation error term \vspace*{-0.2cm}
\begin{equation} \label{eq:approx}
  \interpolQ{n}(v_h)
  := \underbrace{-\int_{\Omhn{n}} \dtBDFr{2}(\erri{n},\errip{n-1},\errip{n-2}) ~v_h ~dx}_{\interpolQ{n,1}(v_h)} \underbrace{\vphantom{\int_{\Omhn{n}}}- \bh{n}(\erri{n},v_h) - \gamma s_{r}^n(\erri{n},v_h)}_{\interpolQ{n,2}(v_h)}. \vspace*{-0.2cm}
\end{equation}

\subsection{Consistency and approximation bounds}
\begin{lemma} \label{lemma:consist}
Let $u \in \LinfH{k+1}$ with $\partial_t^3 u \in \L^\infty(\Q)$ be the exact solution to \eqref{eq:eqn} and let $g \in \LinfW{1}$, then the consistency error in \eqref{eq:consist} has the following bound for all $v_h \in \Vhn{n}$:
\begin{equation}
\left| \consist{n}(v_h) \right| \lesssim \left(\Delta t^2 + h^q + h^k \K^\frac12\right) \mathcal{R}_{\ref{lemma:consist}} \enorm{v_h}{n} \nonumber
\end{equation}
with $\mathcal{R}_{\ref{lemma:consist}} = \mathcal{R}_{\ref{lemma:consist}}(u,g) := \|\partial_t^3 u\|_{\L^{\infty}(\Q)} + \|u\|_{\LinfH{k+1}} + \|g\|_{\LinfW{1}}$. \extendedonly{Here, we recall the semi-norm $\enorm{v_h}{n}^2 = \|v_h\|_{\Omhn{n}}^2 + \frac{\nu}{2} \|\nabla v_h\|_{\Omhn{n}}^2 + \gamma \srn{2}{n}(v_h,v_h)$.}
\end{lemma}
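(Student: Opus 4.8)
The plan is to bound the four pieces $\consist{n,1},\dots,\consist{n,4}$ of the decomposition \eqref{eq:consist} separately -- each by one of $\dt^2$, $h^q$, $h^k\K^{1/2}$, times $\mathcal{R}_{\ref{lemma:consist}}\enorm{v_h}{n}$ -- and then to add them up, absorbing $h^{q+1}\lesssim h^q$. Two tools are used repeatedly. First, integrals over $\Omhn{n}$ and $\Ghn{n}$ are compared with the lifted integrals over $\Omn{n}$ and $\Gn{n}$ through the change of variables induced by $\Phi^n:=\Theta^n\circ(\Psin{n})^{-1}$: by \Cref{propertiesdh} applied to the \emph{difference} $\Theta^n-\Psin{n}$ (not merely to $\Theta^n-\id$), the arising Jacobian, its inverse transpose, the surface measure and the transformed unit normal all deviate from their trivial values by $\mathcal{O}(h^q)$, while the argument shift of any fixed function $w$ costs only $\|w\circ\Phi^n-w\|_{\L^2}\lesssim h^{q+1}\|\nabla w\|_{\L^2}$. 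Second, $\|v_h^{\lift}\|_{\H^s(\Omn{n})}\simeq\|v_h\|_{\H^s(\Omhn{n})}$ for $s\in\{0,1\}$, and boundary contributions are absorbed into $\enorm{v_h}{n}$ via the multiplicative trace inequality $\|v_h\|_{\L^2(\Ghn{n})}^2\lesssim\|v_h\|_{\Omhn{n}}(\|v_h\|_{\Omhn{n}}+\|\nabla v_h\|_{\Omhn{n}})$.

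For $\consist{n,1}$ I would insert $\int_{\Omhn{n}}\partial_t u^n\,v_h\,dx$, splitting it into the pure time-stencil error $\int_{\Omhn{n}}(\dtBDFr{2}(u^n,u^{n-1},u^{n-2})-\partial_t u^n)\,v_h\,dx$, which a third-order Taylor expansion of $\E u$ in time about $t_n$ estimates by $\lesssim\dt^2\|\partial_t^3 u\|_{\L^\infty(\Q)}\|v_h\|_{\Omhn{n}}$ ($\E$ supplying the required extension of $u$ at the earlier time levels), plus the residual geometric mismatch $\int_{\Omhn{n}}\partial_t u^n v_h\,dx-\int_{\Omn{n}}\partial_t u^n v_h^{\lift}\,dx$, treated by the change of variables $\Phi^n$. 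Its Jacobian part contributes $h^q\|\partial_t u^n\|_{\L^2(\Omhn{n})}\enorm{v_h}{n}$ and its shift part $h^{q+1}\|\nabla\partial_t u^n\|_{\L^2(\Omn{n})}\enorm{v_h}{n}$; both spatial norms of $\partial_t u^n$ are controlled by $\mathcal{R}_{\ref{lemma:consist}}$ after using the PDE \eqref{eq:eqn} to substitute $\partial_t u^n=-\nabla\cdot(u^n\w-\nu\nabla u^n)+g(\cdot,t_n)$, whence $\|\partial_t u^n\|_{\L^2}\lesssim\|u\|_{\LinfH{2}}+\|g\|_{\LinfW{1}}$ and $\|\nabla\partial_t u^n\|_{\L^2}\lesssim\|u\|_{\LinfH{3}}+\|g\|_{\LinfW{1}}$. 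Thus $|\consist{n,1}(v_h)|\lesssim(\dt^2+h^q)\mathcal{R}_{\ref{lemma:consist}}\enorm{v_h}{n}$.

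For $\consist{n,2}$ and $\consist{n,4}$ I would expand $\bh{n}(u^n,v_h)-b^n(u^n,v_h^{\lift})$ and $f^n(v_h^{\lift})-\fhn{n}(v_h)$ term by term and apply the change of variables $\Phi^n$: the Jacobian and normal perturbations contribute $h^q$ times $\|u\|_{\LinfH{1}}$ (resp.\ $\|g\|_{\LinfW{1}}$), the argument shifts of $u^n$ and $\nabla u^n$ contribute $h^{q+1}\|u\|_{\LinfH{2}}$ (the shifts of the velocity field and of $g$ merely adding data-dependent constants times $h^{q+1}$), and the boundary term $\tfrac12\int_{\Ghn{n}}(\w\cdot\n)u^nv_h$ of $\bh{n}$ is dealt with by the trace inequality together with $\|u^n\|_{\L^2(\Ghn{n})}\lesssim\|u\|_{\LinfH{2}}$; the hypothesis $g\in\LinfW{1}$ is exactly what makes the comparison of $g$ with the extension occurring in $\fhn{n}$ over the symmetric-difference band harmless. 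Both terms are therefore $\lesssim h^q\mathcal{R}_{\ref{lemma:consist}}\enorm{v_h}{n}$.

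For $\consist{n,3}=\gamma\srn{2}{n}(u^n,v_h)$ I would use Cauchy--Schwarz, $|\consist{n,3}(v_h)|\le(\gamma\srn{2}{n}(u^n,u^n))^{1/2}(\gamma\srn{2}{n}(v_h,v_h))^{1/2}\le(\gamma\srn{2}{n}(u^n,u^n))^{1/2}\enorm{v_h}{n}$, and bound the ghost penalty of the smooth $u^n$ facet by facet: on $\omF{F^n}$ the two mapped polynomials $u_1,u_2$ attached to the neighbouring elements are degree-$k$ approximations of $u^n$, each accurate to order $k+1$ on its own element, so that (transforming to the undeformed mesh by norm equivalence and comparing with a common Bramble--Hilbert polynomial $\pi$) $\|u_1-u_2\|_{\omF{F^n}}\le\|u_1-\pi\|_{\omF{F^n}}+\|\pi-u_2\|_{\omF{F^n}}\lesssim h^{k+1}|u^n|_{\H^{k+1}(\omF{F^n})}$, whence $s_F^n(u^n,u^n)\lesssim h^{2k}|u^n|_{\H^{k+1}(\omF{F^n})}^2$. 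Summing over $\Frn{2}{n}$ with finite overlap gives $\srn{2}{n}(u^n,u^n)\lesssim h^{2k}\|u\|_{\LinfH{k+1}}^2$, and with $\gamma=\cgamma\K$ this yields $|\consist{n,3}(v_h)|\lesssim h^k\K^{1/2}\mathcal{R}_{\ref{lemma:consist}}\enorm{v_h}{n}$. Adding the four estimates proves the claim. The step I expect to cost most care is the geometric mismatch in $\consist{n,1}$: one has to arrange the splitting so that at most one spatial derivative of $\partial_t u^n$ ever appears -- which is why \Cref{propertiesdh} is needed in its "difference" form for the non-shift part -- and then recover that derivative from the PDE (this is where the full strength of $u\in\LinfH{k+1}$ enters) rather than from a direct regularity hypothesis on $\partial_t u$; the low-order case $\Theta^n=\id$ is handled analogously with $\Phi^n=(\Psin{n})^{-1}$.
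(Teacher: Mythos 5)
Your proposal is correct and follows essentially the same route as the paper's proof: the same four-way decomposition, a Taylor/Peano-kernel bound of order $\dt^2\|\partial_t^3 u\|_{\L^\infty(\Q)}$ for the time stencil, the change of variables through the ideal map $\Psin{n}$ and $\Theta^n$ (with Jacobian and shift perturbations of size $h^q$ and $h^{q+1}$) for the geometric mismatch in $\consist{n,1}$, $\consist{n,2}$, $\consist{n,4}$, and Cauchy--Schwarz plus the $h^k\|u^n\|_{\H^{k+1}}$ approximation estimate for the ghost penalty, with $\gamma\simeq\K$ producing the $\K^{1/2}$ factor. The only deviations are in how sub-estimates are closed -- you recover norms of $\partial_t u^n$ from the PDE where the paper simply invokes $\W^2_\infty(\Q)$-norms of $u$, and you prove the ghost-penalty bound for the smooth $u^n$ by a Bramble--Hilbert argument where the paper cites the corresponding lemma of its predecessor -- both of which are equivalent in substance.
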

\begin{proof}[Proof (sketch)]
  The proof follows along the lines of the proof of \cite[Lemma 5.11]{LO_ESAIM_2019} and for completeness is given in the appendix,  cf. \Cref{proofs}.
\end{proof}

\begin{lemma} \label{lemma:approx}
For any $u \in \LinfW{k+1}$ with $\partial_t u \in \LinfH{k}$, the interpolation errors in \eqref{eq:approx} have the following bound for all $v_h \in \mathcal{V}_{r}^{n}$: \vspace*{-0.2cm}
\begin{equation}
	\sum_{n=2}^N \left| \interpol{n,1}(v_h) \right|
	  \lesssim N^{\frac12} h^k
	~ \mathcal{R}_{\ref{lemma:approx}} ~ \|v_h\|_{l_2}, \quad 
	\left| \interpol{n,2}(v_h) \right| \lesssim \K^{\frac12} h^k \|u\|_{\H^{k+1}\left(\Omhn{n}\right)} \enorm{v_h}{n}
  \end{equation}
with $\displaystyle \mathcal{R}_{\ref{lemma:approx}} = \mathcal{R}_{\ref{lemma:approx}} (u)  :=  \|\partial_t u\|_{\LinfH{k}} + \|u\|_{\LinfW{k+1}}$
and \hypertarget{def:l2}{$\|v_h\|_{l_2}^2 := \sum_{n=2}^N \|v_h\|_{\Omhn{n}}^2$.}
\extendedonly{Here, we recall the semi-norm $\enorm{v_h}{n}^2 = \|v_h\|_{\Omhn{n}}^2 + \frac{\nu}{2} \|\nabla v_h\|_{\Omhn{n}}^2 + \gamma \srn{2}{n}(v_h,v_h)$.}
\end{lemma}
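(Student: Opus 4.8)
I would bound the two contributions $\interpolQ{n,1}$ and $\interpolQ{n,2}$ separately. The term $\interpolQ{n,2}$ is essentially a standard interpolation/ghost-penalty estimate; the term $\interpolQ{n,1}$ is the delicate one, because the BDF2 stencil carries a factor $\Delta t^{-1}$ and the first two arguments are \emph{projected} interpolation errors, so I will need the fine time-regularity of the mesh deformation from \Cref{sec:prop param2} together with \Cref{lemma:proj2}.

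\textbf{The term $\interpolQ{n,2}$.} Here the plan is to split $\bh{n}$ from \eqref{def:bh} into its diffusion, (skew-symmetrized) convection, reaction and boundary pieces, and estimate each one using the Lagrange interpolation bounds $\|\erri{n}\|_{\Omhn{n}}\lesssim h^{k+1}\|u\|_{\H^{k+1}(\Omhn{n})}$ and $\|\nabla\erri{n}\|_{\Omhn{n}}\lesssim h^{k}\|u\|_{\H^{k+1}(\Omhn{n})}$, the cut-element trace inequality (which gives $\|\erri{n}\|_{\Ghn{n}}\lesssim h^{k+\frac12}\|u\|_{\H^{k+1}(\Omhn{n})}$) and the multiplicative trace inequality for $v_h$ on $\Omhn{n}$; absorbing the fixed $\nu$-powers into the generic constant, all pieces sum to $|\bh{n}(\erri{n},v_h)|\lesssim h^{k}\|u\|_{\H^{k+1}(\Omhn{n})}\enorm{v_h}{n}$. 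For the ghost-penalty part I would apply Cauchy--Schwarz for the bilinear form $\srn{r}{n}$, the standard ghost-penalty approximation estimate $\srn{r}{n}(\erri{n},\erri{n})\lesssim h^{2k}\|u\|_{\H^{k+1}(\Omhn{n})}^2$ (which holds since $\erri{n}$ is the interpolation error of a globally smooth function, using finite overlap of the facet patches), and $\gamma\simeq\K$ from the definition of $c_\gamma$, to obtain $\gamma|\srn{r}{n}(\erri{n},v_h)|\le(\gamma\srn{r}{n}(\erri{n},\erri{n}))^{\frac12}(\gamma\srn{r}{n}(v_h,v_h))^{\frac12}\lesssim\K^{\frac12}h^{k}\|u\|_{\H^{k+1}(\Omhn{n})}\enorm{v_h}{n}$. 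Since $\K\gtrsim1$, adding the two bounds gives the asserted estimate for $\interpolQ{n,2}$.

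\textbf{The term $\interpolQ{n,1}$.} The starting point is the algebraic identity obtained from linearity of $\Ihn{n}$ and of the BDF2 stencil, using $\uIn{n}=\Ihn{n}u^n$, $\Pimn{n-1}{n}=\Pin{n}$, $\Pimn{n-2}{n}=\Pin{n}\Pin{n-1}$:
\begin{equation*}
\dtBDFb(\erri{n},\errip{n-1},\errip{n-2}) = \big(w^n - \Ihn{n}w^n\big) + \tfrac{1}{2\Delta t}\big(4\rho^{n-1} - \rho^{n-2}\big), \qquad w^n := \dtBDFb(u^n,u^{n-1},u^{n-2}),
\end{equation*}
with the transfer mismatches $\rho^{n-1} := \Pin{n}\Ihn{n-1}u^{n-1}-\Ihn{n}u^{n-1}$ and $\rho^{n-2} := \Pimn{n-2}{n}\Ihn{n-2}u^{n-2}-\Ihn{n}u^{n-2}$. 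For the first summand I would write the difference quotient $w^n$ as a weighted time average of $\partial_t u$ (so that only $\partial_t u\in\LinfH{k}$ is used and no $\Delta t^{-1}$ survives), commute $\Ihn{n}$ with the time integral, pass to the extension on $\Orn{r}{n}$ and apply the $\H^k$-interpolation estimate, obtaining $\|w^n-\Ihn{n}w^n\|_{\Omhn{n}}\lesssim h^{k}\|\partial_t u\|_{\LinfH{k}}$ for every $n$. For $\rho^{n-1},\rho^{n-2}$ I would apply \Cref{lemma:proj2} elementwise --- iterating it once and using the stability of \Cref{lem:projstab} to push the outer projection through $\Pimn{n-2}{n}$ --- which yields, on $\hat T\in\Th$ with the relevant neighbourhood patch, $\|\rho^{n-1}\|_{T^n}\lesssim\|\Theta^n-\Theta^{n-1}\|_{\L^\infty(\homega{\hat T})}\,h^{k+\frac d2}\,|u^{n-1}|_{\W^{k+1}_\infty(\Tneps{n}{\eps})}$ and the analogous bound for $\rho^{n-2}$ with $\|\Theta^n-\Theta^{n-1}\|+\|\Theta^{n-1}-\Theta^{n-2}\|$ over a two-layer patch.

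\textbf{Assembling and summing in time.} Now the dichotomy of \Cref{sec:prop param2} enters. On an element whose (one- or two-layer) neighbourhood keeps its deformation type on the relevant time window, \eqref{eq:const} gives $\|\Theta^n-\Theta^{n-1}\|_{\L^\infty}\le\Delta t$, which exactly cancels the stencil's $\Delta t^{-1}$ and leaves a contribution $\lesssim h^{k}\|u\|_{\LinfW{k+1}}$; on the remaining elements \eqref{eq:change} only gives $\|\Theta^n-\Theta^{n-1}\|_{\L^\infty}\lesssim h^2$, so their contribution to $R_n:=\|\dtBDFb(\erri{n},\errip{n-1},\errip{n-2})\|_{\Omhn{n}}$ carries an extra $h^2/\Delta t$ but involves only elements undergoing a type change within $[t_{n-2},t_n]$. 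Using $|\interpol{n,1}(v_h)|\le R_n\|v_h\|_{\Omhn{n}}$ and Cauchy--Schwarz in $n$, I get $\sum_{n=2}^N|\interpol{n,1}(v_h)|\le\big(\sum_{n=2}^N R_n^2\big)^{1/2}\|v_h\|_{\ltwo}$; the ``regular'' part of $R_n$ contributes $\lesssim N\,h^{2k}\mathcal{R}_{\ref{lemma:approx}}^2$ to $\sum_n R_n^2$, while the change part, after summing over $n$ and using that each fixed element changes type at most $N_D$ times (\Cref{ass:change}), is bounded by $N_D\tfrac{h^4}{\Delta t}$ times the regular part, hence negligible relative to it by \Cref{ass:hhdt}. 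This gives $\sum_{n=2}^N|\interpol{n,1}(v_h)|\lesssim N^{1/2}h^{k}\mathcal{R}_{\ref{lemma:approx}}\|v_h\|_{\ltwo}$.

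\textbf{Main obstacle.} The delicate point is the second summand of $\interpolQ{n,1}$: a crude estimate of $\rho^{n-1},\rho^{n-2}$ would leave an uncompensated $\Delta t^{-1}$. It is controlled only because \Cref{lemma:proj2} produces the factor $\|\Theta^n-\Theta^m\|_\infty$ \emph{linearly}, and because the deformation is Lipschitz in time away from the (boundedly many, by \Cref{ass:change}) type changes, where it instead falls back to being $O(h^2)$; making the contribution of those events summable over all $N\sim\Delta t^{-1}$ steps is precisely what forces \Cref{ass:hhdt} ($h^4/\Delta t\to 0$). The secondary technical nuisance is passing the composed projection $\Pimn{n-2}{n}$ through the estimates without spurious accumulation, which I handle by one iteration of \Cref{lemma:proj2} together with \Cref{lem:projstab}.
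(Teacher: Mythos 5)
Your proposal is correct and follows essentially the same route as the paper: the identity $\dtBDFb(\erri{n},\errip{n-1},\errip{n-2})=(w^n-\Ihn{n}w^n)+\tfrac{1}{2\Delta t}(4\rho^{n-1}-\rho^{n-2})$ is exactly the paper's $Z_1$/$Z_2$ splitting (the paper merely regroups the transfer mismatches as $(\Ihn{n}-\Pin{n}\Ihn{n-1})(4u^{n-1}-u^{n-2})$ plus $\Pin{n}(\Ihn{n-1}-\Pin{n-1}\Ihn{n-2})u^{n-2}$, which is the same telescoping you perform), and the subsequent use of \Cref{lemma:proj2}, \Cref{lem:projstab}, the Lipschitz/$h^2$ dichotomy with \Cref{ass:change}, \Cref{ass:hhdt}, and Cauchy--Schwarz in time matches the paper's argument, as does your treatment of $\interpolQ{n,2}$.
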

\begin{proof}
	We start with the first term $\interpol{n,1}(v_h)$ in \eqref{eq:approx} and sum it over all time steps from $n=2$ to $N$. Let \hypertarget{def:TnOm}{$T^n_\Om := T^n \cap \Omhn{n}$}, then due to Cauchy-Schwarz we obtain
	\begin{align*}
		\sum_{n=2}^N & \left| \interpol{n,1}(v_h) \right| 
		= \sum_{n=2}^N \Big| \int_{\Omhn{n}} \underbrace{\dtBDFr{r}(\erri{n},\errip{n-1},\errip{n-2})}_{=:z^n} v_h dx \Big| 
		\le \sum_{T^n \in \Thn{n}} \sum_{n=2}^N \int_{\TnOm{n}} |z^n| ~ |v_h| dx \\[-1ex]
		&\le \sum_{T^n \in \Thn{n}} \sum_{n=2}^N \|z^n\|_{\TnOm{n}} \|v_h\|_{\TnOm{n}} 
		\le \sum_{T \in \Thn{n}} \Big(\sum_{n=2}^N \|z^n\|_{\TnOm{n}}^2 \Big)^{\frac12} \Big(\sum_{n=2}^N \|v_h\|_{\TnOm{n}}^2 \Big)^{\frac12} \nonumber 
	\end{align*}
	Another triangle inequality yields the splitting
	\begin{equation*} 
  \begin{split} 
    \sum_{n=2}^N & \|z^n\|_{\TnOm{n}}^2  
		\le \sum_{n=2}^N \Big\|\frac{3\erri{n}-4\errip{n-1}+\errip{n-2}}{2 \Delta t}\Big\|_{\TnOm{n}}^2
		\lesssim \underbrace{\Delta t^{-2} \sum_{n=2}^N \left\|3\erri{n}-4\erri{n-1}+\erri{n-2}\right\|_{\TnOm{n}}^2}_{=:Z_1} \\[-2.5ex]
                             &+ \overbrace{\Delta t^{-2} \sum_{n=2}^N \left\|4(\Ihn{n} - \Pin{n} \Ihn{n-1}) u^{n-1} -
(\Ihn{n} - \Pimn{n-2}{n} \Ihn{n-2}) u^{n-2} \right\|_{\TnOm{n}}^2}^{=:Z_2}. 
    \end{split}
  \end{equation*}
	With $\mathbf{e}_I(t) := u(t) - \mathcal{I}_h u(t)$, the first part $Z_1$ can easily be estimated as follows
	\begin{align*}
		Z_1
		= \Delta t^{-2} \sum_{n=2}^N \Big\| 3\int_{t_{n-1}}^{t_{n}} \partial_t \mathbf{e}_I (t) ~dt - \int_{t_{n-2}}^{t_{n-1}} \partial_t \mathbf{e}_I (t) ~dt \Big\|_{\TnOm{n}}^2
		\lesssim N h^{2k}  \|\partial_t u\|_{\LinfH{k}(\TnOm{n})}^2 
	\end{align*}
        For the second part $Z_2$ we split the expression into the two parts
        \begin{align*}
          \Delta t^2 Z_2 & \leq\! 
   \underbrace{\sum_{n=2}^N \left\|(\Ihn{n}\! - \Pin{n} \Ihn{n-1})( 4 u^{n-1}\!\! -\! u^{n-2} ) \right\|_{\TnOm{n}}^2}_{=:\Delta t^2 Z_2^a}
 \!\!\!+\!  \underbrace{\sum_{n=2}^N \left\| \Pin{n} (\Ihn{n-1}\!\! -\! \Pin{n-1} \Ihn{n-2}) u^{n-2}\right\|_{\TnOm{n}}^2        }_{=:\Delta t^2 Z_2^b}   
        \end{align*}
        We start with $Z_2^a$ and apply \Cref{lemma:proj2}:
	\begin{align*}
	Z_2^a 
	&\lesssim \Delta t^{-2} \sum_{n=2}^N \left\|(\Ihn{n} - \Pin{n} \Ihn{n-1}) (4u^{n-1} - u^{n-2})\right\|_{\TnOm{n}}^2 \\
	&\lesssim \Delta t^{-2} \sum_{n=2}^N \|\Theta^n - \Theta^{n-1}\|_{\L^{\infty} \left(\homega{\hat{T}}\right)}^2 ~ h^{2k+d} ~ \|4u^{n-1} - u^{n-2}\|_{\W_\infty^{k+1}(\Tneps{n}{\eps})}^2 \nonumber \\ 
	& \lesssim \left(N + N_D \dt^{-2} h^4 \right)  h^{2k+d} \sup_{n = 0,..,N} \|u^n\|_{\W_\infty^{k+1}(\Tneps{n}{\eps})}^2 \nonumber
	\end{align*}
where we recall that by \Cref{ass:change} for most of the time steps there is $\|\Theta^n - \Theta^{m}\|_{\infty} \lesssim \dt$, while for a bounded number of time steps $N_D$ there is $\|\Theta^n - \Theta^{m}\|_{\infty} \lesssim h^2$. 
For $Z_2^b$ we additionally make use of \eqref{eq:projstab1} which extends the relevant region by one element layer yielding
	\begin{align*}
	Z_2^b \lesssim 
\big(N + N_D \dt^{-2} h^4 \big)  h^{2k+d} \sup_{n = 0,..,N} \|u^n\|_{\W_\infty^{k+1}(\omT{T^n}^*\cap \Omepsn{n})}^2
        \end{align*}
	As a consequence, with $N_D \lesssim N \Delta t$ and $\frac{h^4}{\Delta t} \lesssim 1$, cf. \Cref{ass:hhdt}, we arrive at $\big(N + N_D \dt^{-2} h^4 \big) \lesssim N$ and hence, after summing over the mesh, the bound
	\begin{align*}
		\sum_{T^n \in \Thn{n}} \sum_{n=2}^N \|z^n\|_{\TnOm{n}}^2 
		\lesssim N h^{2k} \Big( \|\partial_t u\|_{\LinfH{k}}^2 
		+ \underbrace{(\# \Thn{n}) h^d}_{\lesssim 1} \|u\|_{\LinfW{k+1}}^2 \Big) .\nonumber
	\end{align*}
Next, we estimate the second term $\interpol{n,2}(v_h)$ in \eqref{eq:approx} (as in \cite[Lemma 5.12]{LO_ESAIM_2019}) by Cauchy-Schwarz inequality and interpolation estimates, i.e. 
	\begin{align*}
		|\bh{n}(\erri{n},v_h)| 
		&\le \|\erri{n}\|_{\H^1(\Omn{n})} \|v_h\|_{\H^1(\Omhn{n})} 
		\lesssim h^k \|u^n\|_{\H^{k+1}\left(\Omn{n}\right)} \big( \|v_h\|_{\Omhn{n}} + \frac{\nu}{2} \|\nabla v_h\|_{\Omhn{n}} \big), \\
		\gamma \srn{2}{n}(\erri{n},v_h) &\le \gamma \srn{2}{n}(\erri{n},\erri{n})^{\frac{1}{2}} \srn{2}{n}(v_h,v_h)^{\frac{1}{2}} \lesssim \K^\frac12 h^k \|u^n\|_{\H^{k+1}(\Omn{n})} \left(\gamma \srn{2}{n} (v_h, v_h)\right)^{\frac{1}{2}}. 
	\end{align*}
	Hence, we obtain the following bound for $\interpol{n,2}(v_h)$:
	\begin{equation*}
		\big| \interpol{n,2}(v_h) \big| \lesssim \big(1 + \K^\frac12\big)  h^k \|u^n\|_{\H^{k+1}(\Omn{n})} \big(\|v_h\|_{\Omhn{n}} + \frac{\nu}{2} \|\nabla v_h\|_{\Omhn{n}} + \gamma \srn{2}{n} (v_h,v_h)^{\frac12} \big). 
	\end{equation*}
  This concludes the proof.
\end{proof}

\subsection{The ghost penalty mechanism}
We introduce a slight generalization of an important result \cite{LO_ESAIM_2019} concerning the bound of the extensions obtained through the application of the ghost penalties.
\begin{lemma} \label{lem:stripbound}
  \begin{subequations}
    \newc{For $v \in H^1(\Omega_h^n)$ and  \hypertarget{def:Sdelta}{  $S_\delta(\Omhn{n}) := \{ x \in \Omhn{n} \mid \dist(x,\partial \Omhn{n}) < \delta \}$ the interior part of a tubular neighborhood of the domain boundary with $\delta > 0$ there holds for any $\beta > 0$}}  
    \begin{align}
    \newc{   \!\!\!\!\,\|\!\, v\|_{\Sdelta(\Omhn{n})}^2} &\newc{ \lesssim (1+\beta^{-1})\delta \|v\|_{\Omhn{n}}^2 + \delta \beta \|\nabla v\|_{\Omhn{n}}^2 } \label{eq:stripbounde} \\ 
      \intertext{
        For $v_h \in \mathcal{V}_{r}^{n}$, $r \in \{1,2,3\}$ and for all $\theta > 0$ there holds 
      }      
	\|v_h\|_{\O_{r}^n}^2 &\lesssim \|v_h\|_{\Omhn{n}}^2 + \K h^2 s_{r}^n(v_h,v_h)  \label{eq:stripbounda}\\
	\|\nabla v_h\|_{\O_{r}^n}^2 &\lesssim \|\nabla v_h\|_{\Omhn{n}}^2 + \K s_{r}^n(v_h,v_h) \label{eq:stripboundb} \\
	\|v_h\|_{\O_{r}^n}^2 &\leq (1 + c_{\ref{lem:stripbound}a} \dt ) \|v_h\|_{\Omhn{n}}^2 + c_{\ref{lem:stripbound}b} \nu \dt \|\nabla v_h\|_{\Omhn{n}}^2 \label{eq:stripboundc} +  c_{\ref{lem:stripbound}c} \dt \K \srn{r}{n}(v_h,v_h)\!\! 
\end{align}
  \end{subequations}
  with constants $c_{\ref{lem:stripbound}a} = c_{\ref{lem:stripbound}} r (1+\theta^{-1})$, $c_{\ref{lem:stripbound}b}(\theta) = c_{\ref{lem:stripbound}} r \theta \nu^{-1}$, $c_{\ref{lem:stripbound}c} = c_{\ref{lem:stripbound}} r (\theta + h^2 + h^2\theta^{-1})$ 
  for a constant $c_{\ref{lem:stripbound}} > 0$ independent of $h$ and $\dt$. \label{lem:stripnorm} 
\end{lemma}
\begin{proof}
 \newc{  
The first claim follows from \cite[Lemma 4.10]{elliott2012finite} with an additional scaling argument.} The remainder follows from \cite[Lemma 5.2, Lemma 5.5 and Lemma 5.7]{LO_ESAIM_2019} with minor modification for an extended strip size from $\O_{\d}^n$ (in \cite{LO_ESAIM_2019}) to $\O_{r}^n$ (here).
\end{proof}
\extendedonly{Note that the constant $c_{\ref{lem:stripbound}b}$ can be decreased at the price of an increase of $c_{\ref{lem:stripbound}a}$ by choosing $\theta$ accordingly. We will make use of this later in order to drive $c_{\ref{lem:stripbound}b}$ sufficiently small. In the following we will however only reflect that dependency on $\theta$ for $c_{\ref{lem:stripbound}b}$. }

Next, we bound the BDF2 tuple norm of a tuple of functions transfered from $\Omhn{n-1}$ and $\Omhn{n-2}$ to $\Omhn{n}$.
\begin{lemma} \label{lem:tupleest}
For any $c_{\ref{lem:tupleest}b}>0$ there holds for sufficiently small $h$ that for all $w_h \in \V_{2}^{n-1}$ and $v_h \in \V_{2}^{n-2}$ 
  \begin{align}
    \|& (\Pin{n} w_h, \Pin{n}\Pin{n-1} v_h) \|_{\Omhn{n}}^2
    \leq  (1 + c_{\ref{lem:tupleest}a} \Delta t) \| (w_h, \Pin{n-1}v_h) \|_{\Omhn{n-1}}^2 \nonumber \\
   & + c_{\ref{lem:tupleest}b} \nu \Delta t \big\{
     \| \nabla w_h \|_{\Omhn{n-1}}^2 + \| \nabla v_h \|_{\Omhn{n-2}}^2
     \big\} \label{eq:tupleest}
    + c_{\ref{lem:tupleest}c} \K  \Delta t \big\{  \srn{1}{n-1}( w_h, w_h) + \srn{2}{n-2}( v_h, v_h)
    \big\} \nonumber 
  \end{align}
for constants $c_{\ref{lem:tupleest}a}$, $c_{\ref{lem:tupleest}c}$ that are independent of $h$, $\Delta t$ or $n$.
\end{lemma}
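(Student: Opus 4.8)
I would prove it in four stages: a reduction by linearity to a single‑operator statement, a split of $\Pin{n}$ into the identity plus a small correction, control of the correction via the projection lemmas and \Cref{ass:hhdt}, and finally a careful balancing against the strip estimates of \Cref{lem:stripbound}; the balancing of the cross term is where the work is.

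First, the reduction. Since $\Pin{n}$ is linear, $2\Pin{n}w_h-\Pin{n}\Pin{n-1}v_h=\Pin{n}(2w_h-\bar v)$ with $\bar v:=\Pin{n-1}v_h\in\V_{2}^{n-1}$, and by the definition \eqref{eq:bdf2norm} of the tuple norm the left‑hand side is $\|\Pin{n}w_h\|_{\Omhn{n}}^2+\|\Pin{n}(2w_h-\bar v)\|_{\Omhn{n}}^2$ while the leading right‑hand side is $\|w_h\|_{\Omhn{n-1}}^2+\|2w_h-\bar v\|_{\Omhn{n-1}}^2$. Hence it suffices to establish, for each $b\in\{w_h,\,2w_h-\bar v\}\subset\V_{2}^{n-1}$, the single‑function estimate $\|\Pin{n}b\|_{\Omhn{n}}^2\le(1+c_a\dt)\|b\|_{\Omhn{n-1}}^2+c_b\nu\dt\|\nabla b\|_{\Omhn{n-1}}^2+c_c\K\dt\,\srn{1}{n-1}(b,b)$ with $c_b$ at our disposal, and then, for $b=2w_h-\bar v$, to re‑express the $\nabla\bar v$‑ and $\srn{1}{n-1}(\bar v,\bar v)$‑contributions through $v_h$: by \eqref{eq:projstab2} of \Cref{lem:projstab} and \eqref{eq:stripboundb} one has $\|\nabla\bar v\|_{\Omhn{n-1}}^2\lesssim\|\nabla v_h\|_{\Omhn{n-2}}^2+\K\srn{2}{n-2}(v_h,v_h)$, and by \Cref{cor:projstab} together with \eqref{eq:stripboundb} and \Cref{ass:hhdt} (to absorb the remaining $h^2\|\nabla v_h\|^2$ into $c_b\nu\dt\|\nabla v_h\|^2$) one gets $\srn{1}{n-1}(\bar v,\bar v)\lesssim\srn{2}{n-2}(v_h,v_h)+c_b\nu\dt\,\nu^{-1}\|\nabla v_h\|_{\Omhn{n-2}}^2$; both fit the target right‑hand side.

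For the single‑function estimate I would write $\Pin{n}b=b+(\Pin{n}-\id)b$ and expand $\|\Pin{n}b\|_{\Omhn{n}}^2=\|b\|_{\Omhn{n}}^2+2\big(b,(\Pin{n}-\id)b\big)_{\Omhn{n}}+\|(\Pin{n}-\id)b\|_{\Omhn{n}}^2$. Because $\dd\ge\dt\|\w\|_{\infty}$ (assumption \eqref{ass:delta}) and $\dist(\partial\Omhn{\cdot},\partial\Omn{\cdot})\lesssim h^2$, one has $\Omhn{n}\subseteq\Orn{1}{n-1}$ for $h$ small, so \eqref{eq:stripboundc} of \Cref{lem:stripbound} (at time $t_{n-1}$, $r=1$, with its free parameter $\theta$) bounds the first term by $(1+c_a\dt)\|b\|_{\Omhn{n-1}}^2+c_b\nu\dt\|\nabla b\|_{\Omhn{n-1}}^2+c_c\K\dt\,\srn{1}{n-1}(b,b)$, with $c_b$ drivable to zero. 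For the last term I would use the element‑wise bound underlying \eqref{eq:proj:a}, $\|(\id-\Pin{n})b\|_{T^n}\lesssim\|\Theta^n-\Theta^{n-1}\|_{\L^\infty(\homega{\hat T})}\|\nabla b\|_{\omT{T^m}}$, together with the dichotomy of \Cref{sec:prop param2}: on every $\hat T$ whose type (and that of its neighbours) is unchanged on $[t_{n-1},t_n]$ one has $\|\Theta^n-\Theta^{n-1}\|_\infty\lesssim\dt$ by \eqref{eq:const}, and on the remaining elements $\|\Theta^n-\Theta^{n-1}\|_\infty\lesssim h^2$ by \eqref{eq:change}, their union lying in a fixed strip $\subseteq\Orn{2}{n-1}$. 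Summing over the elements meeting $\Omhn{n}$, using finite overlap and then \eqref{eq:stripboundb}, gives $\|(\Pin{n}-\id)b\|_{\Omhn{n}}^2\lesssim(\dt^2+h^4)\big(\|\nabla b\|_{\Omhn{n-1}}^2+\K\,\srn{2}{n-1}(b,b)\big)$; with $h^4\le c_{A\ref{ass:hhdt}}\nu\dt$ from \Cref{ass:hhdt} and $\dt^2=(\dt/\nu)\,\nu\dt$ this is $\lesssim\varepsilon\,\nu\dt\big(\|\nabla b\|_{\Omhn{n-1}}^2+\K\,\srn{2}{n-1}(b,b)\big)$ with $\varepsilon=\varepsilon(\dt,c_{A\ref{ass:hhdt}})\to0$.

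The main obstacle is the cross term $2\big(b,(\Pin{n}-\id)b\big)_{\Omhn{n}}$ and its interplay with the $(1+c\dt)$‑factor: a monolithic Cauchy--Schwarz plus Young step yields $\mu\|b\|_{\Omhn{n}}^2$ plus $\mu^{-1}$ times the squared correction, and the two demands ($\mu\lesssim\dt$, so that after \eqref{eq:stripboundc} the $\|b\|^2$‑coefficient stays $1+O(\dt)$, and $\mu^{-1}\varepsilon\nu\dt\lesssim c_b\nu\dt$, i.e.\ $\mu\gtrsim\varepsilon/c_b$) are incompatible unless $\varepsilon\lesssim\dt$, which \Cref{ass:hhdt} does not by itself supply. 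I would therefore not treat the cross term in one piece but split $(\Pin{n}-\id)b$ along the same type‑change dichotomy: its ``Lipschitz'' part is $\lesssim\dt\|\nabla b\|_{\Orn{2}{n-1}}$, so its contribution $2\dt\,\|b\|_{\Omhn{n}}\|\nabla b\|_{\Orn{2}{n-1}}$ is handled by a $\nu$‑weighted Young inequality producing exactly $O_\nu(\dt)\|b\|^2+O_\nu(\nu\dt)\|\nabla b\|^2$ (constants depending on $c_b$ and $\nu$ only, not on $h,\dt,n$) plus a strip remainder absorbed via \eqref{eq:stripbounda}--\eqref{eq:stripboundb}; its ``type‑change'' part is supported on a set of measure $\lesssim\min(h,\dt\|\w\|_\infty)$ times the surface measure, on which $\|b\|$ and $\|\nabla b\|$ are controlled through \eqref{eq:stripbounda}/\eqref{eq:stripboundb} while the extra smallness of the region is converted into the missing $\dt$‑gain by \Cref{ass:hhdt} — this is the delicate bookkeeping and is carried out in the appendix. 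Finally $c_b$ is made as small as required by choosing $\theta$ in \Cref{lem:stripbound} and $c_{A\ref{ass:hhdt}}$ in \Cref{ass:hhdt} accordingly, which enlarges $c_a$ and $c_c$ but keeps them independent of $h$, $\dt$ and $n$.
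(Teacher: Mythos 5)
Your architecture is the same as the paper's: unroll the tuple norm into two single-operator estimates $\|\Pin{n}b\|_{\Omhn{n}}^2$ with $b\in\{w_h,\,2w_h-\Pin{n-1}v_h\}$, pass from $\Omhn{n}$ to $\Omhn{n-1}$ via \eqref{eq:stripboundc} with $r=1$, and convert the $\Pin{n-1}v_h$-contributions back to $v_h$ through \eqref{eq:projstab2}, \Cref{cor:projstab} and \eqref{eq:stripboundb}, tuning $\theta$ at the end. That part of your reduction is correct and matches the paper. You are also right that the delicate point is the cross term $2\big(b,(\Pin{n}-\id)b\big)_{\Omhn{n}}$: the paper disposes of the comparison $\|\Pin{n}b\|^2$ vs.\ $\|b\|^2$ with a bare ``triangle inequality plus \Cref{lemma:proj}'', and your observation that a Young step with weight $\mu\sim\dt$ turns $h^2\|b\|\,\|\nabla b\|$ into $\dt^{-1}h^4\|\nabla b\|^2$, for which \Cref{ass:hhdt} ($h^4\lesssim\nu\dt$) only yields $\nu\|\nabla b\|^2$ \emph{without} the factor $\dt$, is a legitimate criticism; closing it by the naive route would need $h^4\lesssim\nu\dt^2$.

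The gap is in your proposed repair. For the ``Lipschitz'' elements your argument is fine: the refined bound $\|(\id-\Pin{n})b\|_{T^n}\lesssim\|\Theta^n-\Theta^{n-1}\|_{\infty}\|\nabla b\|_{\omT{T^{n-1}}}$ does follow from the proof of \Cref{lemma:proj} (replace $|\hat x_i-\yihat|\lesssim h^2$ by $|\hat x_i-\yihat|\lesssim\|\Theta^n-\Theta^{n-1}\|_\infty$), and with \eqref{eq:const} the cross term there yields $C_\nu\dt\|b\|^2+c_{\ref{lem:tupleest}b}\nu\dt\|\nabla b\|^2$ as you say. But for the type-change elements your mechanism does not close, and the ``delicate bookkeeping carried out in the appendix'' is precisely the step that is missing. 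Two concrete problems: (i) \Cref{ass:change} bounds the \emph{number} $N_D$ of time steps at which a fixed element changes type over the whole simulation; it gives no smallness at a single such time step, and \Cref{lem:tupleest} is a per-time-step statement with leading constant $1+c_{\ref{lem:tupleest}a}\dt$, so a global count of bad steps cannot be invoked inside it. (ii) At a bad step the set $S$ of type-changing elements is a layer of elements swept by the interface, of measure $\gtrsim h\cdot\meas_{d-1}(\Gamma)$, not $O(\dt)$ or $O(h^2)$; on $S$ you only have $\|(\id-\Pin{n})b\|\lesssim h^2\|\nabla b\|$, so Young with the weight $\mu\gtrsim h^4/(c_{\ref{lem:tupleest}b}\nu\dt)$ forced by the gradient budget leaves a term $\mu\|b\|_S^2$ with $\mu$ a constant (only small via $c_{A\ref{ass:hhdt}}$, not $O(\dt)$), and $\|b\|_S^2$ is not controlled by $\dt\|b\|_{\Omhn{n-1}}^2$ for general $b\in\V_2^{n-1}$. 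Hence the inequality \eqref{eq:tupleest} with $c_{\ref{lem:tupleest}a}$ independent of $\dt$ is not established at those steps by your argument; to make your route work you would either have to restructure \Cref{thm:stability} to admit an $O(1)$ loss at the $N_D$ exceptional steps, or strengthen \Cref{ass:hhdt} to $h^4\lesssim\nu\dt^2$ so that the monolithic Young step suffices everywhere. As it stands, the proposal identifies the right difficulty but does not resolve it.
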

\begin{proof}
 \newc{  
We will bound the l.h.s. of the claim step by step until only terms of the form of the r.h.s. remain. To simplify the notation we introduce the terms $R_1$, $R_2$ and $R_3$ for the contributions in the squared $L^2$ norm ($R_1$), the sqaure $H^1$ semi norm ($R_2$) and the ghost penalty part ($R_3$) so that the r.h.s. reads as $(1 + c_{\ref{lem:tupleest}a} \Delta t)R_1 + c_{\ref{lem:tupleest}b}\nu \Delta t R_2 + c_{\ref{lem:tupleest}c}\K \Delta t R_3$.  

First, let us note that within $\Omhn{n}$ there only holds $w_h \neq \Pin{n} w_h$ on elements that are direct neighbors to cut elements at time $n-1$ or time $n$. We define the corresponding domain 
of the submesh of $\Orn{0}{n}$ as $\mathcal{O}^\ast$. We also introduce $\mathcal{O}^\ast_+$ which is the domain of the same submesh extended by one layer of neighbors, but deformed w.r.t. $\Theta^{n-1}$. Correspondingly $\mathcal{O}^\ast_{2+}$ adds an addditional layer and is deformed w.r.t. $\Theta^{n-2}$.
We observe
\begin{align*}
\Vert \Pin{n} w_h \Vert_{\Omhn{n}}^2
& \leq 
\Vert w_h \Vert_{\Omhn{n}}^2
+ \Vert w_h - \Pin{n} w_h \Vert_{\Omhn{n}}^2
+ 2 (w_h - \Pin{n} w_h, w_h)_{\Omhn{n}} \\ 
& \leq 
\Vert w_h \Vert_{\Omhn{n}}^2 + \beta
\Vert w_h \Vert_{\mathcal{O}^\ast \cap \Omhn{n}}^2 + (1+\beta^{-1}) \Vert w_h - \Pin{n} w_h \Vert_{\mathcal{O}^\ast}^2
\end{align*}
for any  $\beta > 0$. 
Using \eqref{eq:bdf2norm} to unroll the BDF2 norm, together with the previous inequality and \Cref{lemma:proj}
with 
$
\mathcal{O}^\ast_+ \subset \Orn{1}{n-1} 
$
yields
  \begin{align*}
     &\bdftwonorm{\Pin{n} w_h}{\Pin{n} \Pin{n-1} v_h}{\Omhn{n}}{2}
   \leq  \bdftwonorm{w_h}{\Pin{n-1} v_h}{\Omhn{n}}{2}
   + \beta \bdftwonorm{w_h}{ \Pin{n-1} v_h}{\mathcal{O}^\ast \cap \Omhn{n}}{2} \\
   &\qquad 
+ \!6 ~ c_{\eqref{eq:proj:b}} (1 +\beta^{-1} ) h^4 \!\left( \Vert \nabla w_h \Vert_{\Orn{1}{n-1}}^2 +  \Vert \nabla \Pin{n-1} v_h \Vert_{\mathcal{O}^\ast_+}^2 \right)
\!= \!\text{I}\!+\!\text{II}\!+\!\text{III}.      
 \end{align*}
We choose $\beta = \frac{\Delta t}{\delta}$
with $\delta$ sufficiently large so that there holds $\mathcal{O}^\ast \cap \Omhn{n} \subset \Sdelta(\Omhn{n})$ and $\delta \lesssim \Delta t + h$ and hence $\beta \simeq \frac{\Delta t}{\Delta t + h}$. We now succesively bound the terms III, II and I:\\
$\bullet$ \underline{III:} 
For III we recall \cref{lem:projstab} and use $\mathcal{O}^\ast_{2+} \subset \Orn{2}{n-2}$ so that
$$
\text{III} 
\lesssim (1 + h \Delta t^{-1} ) h^4 \left( \Vert \nabla w_h \Vert_{\Orn{1}{n-1}}^2 + c_{\ref{lem:projstab}b} \Vert \nabla v_h \Vert_{\Orn{2}{n-2}}^2 \right) 
$$
Now applying \eqref{eq:stripboundb} and making use of \cref{ass:hhdt} with $h$ sufficiently small we can bound III with }
\begin{subequations}
\newc{   
\begin{equation} 
\text{III}\leq \frac{c_{\ref{lem:tupleest}b}}{4} \nu \Delta t R_2 + c_{\text{III}}\nu K\Delta t R_3     \label{eq:proof66a}   
\end{equation}
for a constant $c_{\text{III}} > 0$ that is indepedent of $h$, $\Delta t$ and $n$.\\ 
$\bullet$ \underline{II:} 
For II we have that for the chosen $\delta$ and $\beta$ there holds $\mathcal{O}^\ast \cap \Omhn{n} \subset \Sdelta(\Omhn{n})$ and with \eqref{eq:stripbounde} and any $\beta^\ast > 0$ (and \cref{lem:projstab}) further 
\begin{align*}   
\text{II} \leq  (1+{\beta^{\ast}}^{-1}   \nu^{-1}) & \Delta t \bdftwonorm{w_h}{\Pin{n-1} v_h}{\Omhn{n}}{2} + \tilde \beta^{\ast} \tilde c_{\text{II}} \nu \Delta t  \left( \Vert \nabla w_h \Vert_{\Orn{1}{n-1}}^2 \! + \! c_{\ref{lem:projstab}b} \Vert \nabla v_h \Vert_{\Orn{2}{n-2}}^2 \right)
\end{align*}  
Applying \eqref{eq:stripboundb} and choosing $\beta^{\ast}$ sufficiently small yields
\begin{align}   
  & \Longrightarrow   \text{II} \leq (1 + \frac{c_{\text{II}}}{c_{\ref{lem:tupleest}b}} ) \Delta t \cdot I  + \frac{c_{\ref{lem:tupleest}b}}{4} \nu \Delta t  \ (R_2 +  R_3) \label{eq:proof66b}  
\end{align}  
for constants $\tilde c_{\text{II}}, c_{\text{II}}$ that are independent of $h$, $\Delta t$ and $n$.\\ 
%
$\bullet$ \underline{I:} }
To go from $\Omhn{n}$ to $\Omhn{n-1}$ we exploit $\Omhn{n}\subset \Orn{1}{n-1}$ and apply \eqref{eq:stripboundc} from \Cref{lem:stripnorm} with $r=1$:
 \begin{align*}
    \bdftwonorm{ w_h}{ \Pin{n-1} v_h}{\Omhn{n}}{2}
   \leq & (1 + c_{\ref{lem:stripbound}a} \dt) \bdftwonorm{w_h}{\Pin{n-1} v_h}{\Om_{h}^{n-1}}{2} \\
    & + (c_{\ref{lem:stripbound}b} \nu  \dt + c_{\eqref{eq:proj:b}} h^4) \bdftwonorm{\nabla w_h}{\nabla \Pin{n-1} v_h}{\Orn{1}{n-1}}{2} \\
    & + c_{\ref{lem:stripbound}c} \dt (8 \K \srn{1}{n-1}(w_h,w_h) + 2 \K \srn{1}{n-1}(\Pin{n-1}v_h,\Pin{n-1}v_h) )
 \end{align*}
 In the last step we used $\srn{1}{n-1}(2a-b, 2a-b) \leq 8 \srn{1}{n-1}(a,a) + 2 \srn{1}{n-1}(b,b)$ for $a,b \in \mathcal{V}_2^{n-1}$. 
 After splitting the terms in the BDF2 norm, we use \eqref{eq:projstab2} 
 to bound $\| \nabla \Pin{n-1} v_h \|_{\Orn{1}{n-1}}^2$ with $\| \nabla v_h \|_{\Orn{2}{n-2}}^2$,
 and \eqref{eq:stripboundb} from \Cref{lem:stripbound}
 \begin{align*}
   \bdftwonorm{&\nabla w_h }{\nabla \Pin{n-1} v_h}{\Orn{1}{n-1}}{2} \!\!\!
   \leq 9  \| \nabla w_h\|_{\Orn{1}{n-1}}^2 \!\!+\!
  2 \| \nabla \Pin{n-1} v_h \|_{\Orn{1}{n-1}}^2 \\
\leq & 9  \| \nabla w_h\|_{\Orn{1}{n-1}}^2 \!\!+ 2 c_{\eqref{eq:projstab2}} c_{\eqref{eq:stripboundb}} \big(
       \| \nabla v_h \|_{\Omhn{n-2}}^2 \!\!+\!\! \K \srn{2}{n-2}(v_h,v_h) \big)\\
\leq & 9 c_{\eqref{eq:stripboundb}} (\| \nabla w_h\|_{\Omhn{n-1}}^2 \!\!+
\K \srn{1}{n-1}(w_h,w_h) )
\! +
\!\! 2 c_{\eqref{eq:projstab2}} c_{\eqref{eq:stripboundb}} \big(
      \| \nabla v_h \|_{\Omhn{n-2}}^2 \!\!+\! \K \srn{2}{n-2}(v_h,\! v_h) \! \big)\!.
 \end{align*}
 Finally, it only remains to bound the ghost penalty stabilization term on $\Pin{n-1} v_h$. We use \Cref{cor:projstab} and \eqref{eq:stripboundb} from \Cref{lem:stripnorm}, so that
\begin{align*}
  \srn{1}{n-1}(&\Pin{n-1}v_h, \Pin{n-1}v_h) \leq 
                                                 c_{\ref{cor:projstab}a} \srn{2}{n-2}(v_h,v_h) + c_{\ref{cor:projstab}b} h^2 \|\nabla v_h\|_{\Orn{2}{n-2}}^2 \\
& \leq 
(c_{\ref{cor:projstab}a} + c_{\ref{cor:projstab}b} c_{\eqref{eq:stripboundb}} h^2 \K ) \srn{2}{n-2}(v_h,v_h) + c_{\ref{cor:projstab}b} c_{\eqref{eq:stripboundb}} h^2 \|\nabla v_h\|_{\Omhn{n-2}}^2.  
\end{align*}
Now, we need to collect all pieces together. 
\newc{
Ensured by \Cref{ass:hhdt} we can bound 
$(c_{\ref{lem:stripbound}b} \nu  \dt + c_{\eqref{eq:proj:b}} h^4) \leq 2 c_{\ref{lem:stripbound}b} \nu  \dt$
and choose $c_{\ref{lem:stripbound}b}$ sufficiently small to obtain
\begin{align}
 I \leq  (1 + c_{I,1}\Delta t) R_1 + \frac{c_{\ref{lem:tupleest}b}}{4}  \nu \Delta t R_2 + c_{I,3} K \Delta t R_3  \label{eq:proof66c}  
\end{align}
for constants $c_{\text{I},1}, c_{\text{I},3}$ that are independent of $h$, $\Delta t$ and $n$, but depend on $c_{\ref{lem:tupleest}b}$.

$\bullet$ \underline{I+II+III:} 
Merging \eqref{eq:proof66a}--\eqref{eq:proof66c} we obtain 
\begin{align*}
  \text{I}+\text{II}+\text{III} \leq &
  (1 + (1+\frac{c_{\text{II}}}{c_{\ref{lem:tupleest}b}}) \Delta t ) I + 
  \frac{c_{\ref{lem:tupleest}b}}{2}\nu \Delta t R_2 + (\frac{c_{\ref{lem:tupleest}b}}{4} + c_{\text{III}} )\nu K \Delta t  R_3    \\
   = &(1 + (1+\frac{c_{\text{II}}}{c_{\ref{lem:tupleest}b}}) \Delta t ) ( 1 + c_{\text{I},1} \Delta t) R_1\\
    & +  (3 + (1+\frac{c_{\text{II}}}{c_{\ref{lem:tupleest}b}}) \Delta t ) \frac{c_{\ref{lem:tupleest}b}}{4}\nu \Delta t R_2\\
    & +  ((1 + (1+\frac{c_{\text{II}}}{c_{\ref{lem:tupleest}b}}) \Delta t ) c_{\text{I},3} + ( \frac{c_{\ref{lem:tupleest}b}}{4} + c_{\text{III}} )\nu   ) K \Delta t R_3. 
\end{align*}  
Hence, for sufficiently small $\Delta t$ the claim follows. 
}
\end{subequations} 
\end{proof}

\subsection{Stability analysis}
\begin{theorem} \label{thm:stability}
	The solution $\{u_h^n\}$ of \eqref{eq:disceq} with $r=2$ satisfies the stability bound 
	\begin{align*}
		\bdftwonorm{u_h^N&}{\Pin{N} u_h^{N-1}}{\Omhn{N}}{2}\!+\dt \! \sum_{n=2}^N \! \big( \nu \|\nabla u_h^n\|_{\Omhn{n}}^2 + 2 \gamma \srn{2}{n}(u_h^n,u_h^n) \big) \lesssim \exp(c_{\ref{thm:stability}} t_N) R^0\text{ with}\\
		 R^0 \! := \! \bdftwonorm{u_h^{1}&}{\Pin{1} u_h^{0}}{\Omhn{1}}{2} \! + \dt \Big( \sum_{n=2}^N \|g^n\|_{\Omhn{n}}^2 + 
		 \sum_{n=0}^1 \big(\nu\|\nabla u_h^{n}\|_{\Omhn{n}}^2 + \K \srn{2}{n}(u_h^{n},u_h^{n})\big)\! \Big)
     ,
	\end{align*}
  for $c_{\ref{thm:stability}} := c_{\ref{lem:tupleest}a} + \frac12 + 4 \xxi$ with $\xxi$ as in \eqref{eq:timestep}, i.e. indepedent of $\finaltime$, $h$ or $\dt$.
\end{theorem}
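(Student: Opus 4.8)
The plan is to run the standard discrete energy estimate for the BDF2 scheme, with the two nonstandard features --- the consecutive mesh-transfer operators $\Pin{n}$ and the ghost-penalty extension --- absorbed by the dedicated estimates \Cref{lem:tupleest}, \Cref{cor:projstab} and \Cref{lem:stripbound}. First I would test \eqref{eq:disceq} with $r=2$ and $v_h = 4\dt\,u_h^n$ (a legitimate element of $\mathcal{V}_{2}^{n}$). For the discrete time derivative, \Cref{lem:bdfnorm} applied with $w_h=u_h^n$, $v_h=\Pin{n}u_h^{n-1}$, $u_h=\Pimn{n-2}{n}u_h^{n-2}$ produces the telescoping pair $\bdftwonorm{u_h^n}{\Pin{n}u_h^{n-1}}{\Omhn{n}}{2} - \bdftwonorm{\Pin{n}u_h^{n-1}}{\Pimn{n-2}{n}u_h^{n-2}}{\Omhn{n}}{2}$. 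The convection-diffusion form is bounded below via \eqref{eq:bilinear} (licit since $\dt<\xxi^{-1}$ holds in the asymptotic regime we treat), contributing $2\nu\dt\|\nabla u_h^n\|_{\Omhn{n}}^2 - 4\xxi\dt\|u_h^n\|_{\Omhn{n}}^2$, while the stabilization term is kept in full as $4\gamma\dt\srn{2}{n}(u_h^n,u_h^n)\ge0$. The right-hand side $4\dt f_h^n(u_h^n)$ is treated by Cauchy--Schwarz and Young, yielding $2\dt\|g^n\|_{\Omhn{n}}^2 + 2\dt\|u_h^n\|_{\Omhn{n}}^2$.

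The essential step is to pull the ``old'' tuple norm $\bdftwonorm{\Pin{n}u_h^{n-1}}{\Pimn{n-2}{n}u_h^{n-2}}{\Omhn{n}}{2}$, which still lives on the new domain $\Omhn{n}$, back onto $\Omhn{n-1}$: this is precisely \Cref{lem:tupleest} with $w_h=u_h^{n-1}$, $v_h=u_h^{n-2}$. Writing $A^n:=\bdftwonorm{u_h^n}{\Pin{n}u_h^{n-1}}{\Omhn{n}}{2}$, which by \eqref{eq:bdf2norm} satisfies $A^n\ge\|u_h^n\|_{\Omhn{n}}^2$, the per-step estimate becomes
\begin{align*}
  A^n + 2\nu\dt\|\nabla u_h^n\|_{\Omhn{n}}^2 + 4\gamma\dt\srn{2}{n}(u_h^n,u_h^n)
  &\le (1+c_{\ref{lem:tupleest}a}\dt)\,A^{n-1} + (4\xxi+2)\dt\|u_h^n\|_{\Omhn{n}}^2 + 2\dt\|g^n\|_{\Omhn{n}}^2 \\
  &\quad + c_{\ref{lem:tupleest}b}\,\nu\dt\big(\|\nabla u_h^{n-1}\|_{\Omhn{n-1}}^2 + \|\nabla u_h^{n-2}\|_{\Omhn{n-2}}^2\big) \\
  &\quad + c_{\ref{lem:tupleest}c}\,\K\dt\big(\srn{1}{n-1}(u_h^{n-1},u_h^{n-1}) + \srn{2}{n-2}(u_h^{n-2},u_h^{n-2})\big),
\end{align*}
where, crucially, \Cref{lem:tupleest} permits $c_{\ref{lem:tupleest}b}$ to be chosen arbitrarily small (at the price of the value of $c_{\ref{lem:tupleest}c}$).

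Summing over $n=2,\dots,N$, the $A^n$-terms telescope to $A^N-A^1$. Re-indexing the shifted gradient sums gives at most $2c_{\ref{lem:tupleest}b}\nu\dt\sum_{n=2}^{N}\|\nabla u_h^n\|_{\Omhn{n}}^2$ plus the two end contributions at $n=0,1$; picking $c_{\ref{lem:tupleest}b}<\tfrac12$ lets these be absorbed by the $2\nu\dt\sum\|\nabla u_h^n\|^2$ on the left, leaving the claimed $\nu\dt\sum\|\nabla u_h^n\|^2$. Using $\Frn{1}{m}\subseteq\Frn{2}{m}$, and hence $\srn{1}{m}(v,v)\le\srn{2}{m}(v,v)$, the shifted stabilization sums re-index to at most $2c_{\ref{lem:tupleest}c}\K\dt\sum_{n=2}^N\srn{2}{n}(u_h^n,u_h^n)$ plus end contributions; since $\gamma=\cgamma\K$ with $\cgamma$ a scheme parameter at our disposal, choosing $\cgamma>c_{\ref{lem:tupleest}c}$ absorbs these into $4\gamma\dt\sum\srn{2}{n}$, leaving $2\gamma\dt\sum\srn{2}{n}$. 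All end contributions at $n=0,1$, together with $A^1=\bdftwonorm{u_h^1}{\Pin{1}u_h^0}{\Omhn{1}}{2}$ and $2\dt\sum_{n=2}^N\|g^n\|_{\Omhn{n}}^2$, assemble exactly into $R^0$. What remains on the right is $\sum_{n=2}^N\big(c_{\ref{lem:tupleest}a}\dt A^{n-1} + (4\xxi+2)\dt A^n\big)$; moving the $A^N$-part to the left for small $\dt$ (at cost $(1+c\dt)$) and invoking the discrete Gronwall lemma yields $A^N + \nu\dt\sum_{n=2}^N\|\nabla u_h^n\|_{\Omhn{n}}^2 + 2\gamma\dt\sum_{n=2}^N\srn{2}{n}(u_h^n,u_h^n) \lesssim \exp(c_{\ref{thm:stability}}t_N)\,R^0$ with $c_{\ref{thm:stability}} = c_{\ref{lem:tupleest}a} + \tfrac12 + 4\xxi$.

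The main obstacle is the absorption bookkeeping: one has to verify that every spurious gradient and ghost-penalty contribution introduced by the mesh transfer in \Cref{lem:tupleest} --- which in turn hides, via \Cref{cor:projstab} and \Cref{lemma:proj}, the $\O(h^4)$ projection errors that \Cref{ass:hhdt} keeps subordinate to $\nu\dt$ --- genuinely fits under the dissipative $2\nu\dt\|\nabla u_h^n\|^2$ and $4\gamma\dt\srn{2}{n}$ terms on the left. This is what dictates the choices $c_{\ref{lem:tupleest}b}<\tfrac12$, $\cgamma$ sufficiently large, and $\theta$ in \Cref{lem:stripbound} chosen accordingly; it is the only delicate point, after which the Gronwall step is entirely routine.
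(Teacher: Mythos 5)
Your proposal follows essentially the same route as the paper's proof: testing with $v_h=4\dt\,u_h^n$, invoking \Cref{lem:bdfnorm} for the telescoping tuple norms, the coercivity bound \eqref{eq:bilinear}, \Cref{lem:tupleest} to pull the transferred tuple norm back to $\Omhn{n-1}$, absorption via $c_{\ref{lem:tupleest}b}\le\frac12$ and $\gamma\ge c_{\ref{lem:tupleest}c}\K$, and discrete Gronwall. The only (cosmetic) discrepancy is your symmetric Young split $4\dt f_h^n(u_h^n)\le 2\dt\|g^n\|^2+2\dt\|u_h^n\|^2$, which yields the growth constant $c_{\ref{lem:tupleest}a}+2+4\xxi$ rather than the stated $c_{\ref{lem:tupleest}a}+\frac12+4\xxi$; the paper uses the weighted split with $\beta=\frac14$ to obtain the latter.
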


\begin{proof}
	We test \eqref{eq:disceq} with $v_h = 4 \dt u_h^n$ and apply \eqref{eq:bdf2normest} which yields
	\begin{align} \label{eq:}
		\bdftwonorm{u_h^n}{\Pin{n} u_h^{n-1}}{\Omhn{n}}{2} &+ 4 \dt \bh{n}(u_h^n,u_h^n) + 4 \dt \gamma \srn{2}{n}(u_h^n,u_h^n) \nonumber	\\
		&\le \bdftwonorm{\Pin{n} u_h^{n-1}}{\Pin{n} \Pin{n-1} u_h^{n-2}}{\Omhn{n}}{2} + 4 \dt \fhn{n}(u_h^n). 
	\end{align}
	Recall the lower bound of $\bh{n}(\cdot,\cdot)$ from \eqref{eq:bilinear}. We apply \Cref{lem:tupleest} on the r.h.s. followed by Young's inequality with $\beta > 0$ and Cauchy-Schwartz applied to $\fhn{n}$:
	\begin{align*}
		(1 &- 4 \dt \xxi) \bdftwonorm{u_h^n}{\Pin{n} u_h^{n-1}}{\Omhn{n}}{2} \!+\! 2 \dt \nu \|\nabla u_h^n\|_{\Omhn{n}}^2 \!+\! 4 \dt \gamma \srn{2}{n}(u_h^n,u_h^n) \\
		&\le (1 \!+\! c_{\ref{lem:tupleest}a} \dt) \bdftwonorm{u_h^{n-1}}{\Pin{n-1} u_h^{n-2}}{\Omhn{n-1}}{2} \nonumber
		\!+\! c_{\ref{lem:tupleest}b} \nu \dt \big(\|\nabla u_h^{n-1}\|_{\Omhn{n-1}}^2 \!+\! \|\nabla u_h^{n-2}\|_{\Omhn{n-2}}^2\big) \nonumber \\
		&\quad \!+\! c_{\ref{lem:tupleest}c} \dt \big(\K \srn{2}{n-1}\!(u_h^{n-1}\!\!,u_h^{n-1}) \!+\! \K \srn{2}{n-2}\!(u_h^{n-2}\!\!,u_h^{n-2})\big) 
     \!+\! 2 \dt \big( \beta^{-1} \|g^n\|_{\Omhn{n}}^2 \!+\! \beta \|u_h^n\|_{\Omhn{n}}^2\big). \nonumber
	\end{align*}
	Summing over $n=2,...,N \le N$, choosing $c_{\ref{lem:tupleest}b} \le \frac12$, and assuming $\gamma \ge c_{\ref{lem:tupleest}c} \K$ yields
	\begin{align}
		(1  - 4 \dt& \xxi - 2 \dt \beta) \bdftwonorm{u_h^N}{\Pin{N} u_h^{N-1}}{\Omhn{N}}{2} + \dt \sum_{n=2}^N \nu \|\nabla u_h^n\|_{\Omhn{n}}^2 + 2 \dt \sum_{n=2}^N \gamma \srn{2}{n}(u_h^n,u_h^n) \nonumber \\
		&\le \bdftwonorm{u_h^{1}}{\Pin{1} u_h^{0}}{\Omhn{1}}{2} \!+\! \dt \sum_{n=0}^1 \Big( \nu \|\nabla u_h^{n}\|_{\Omhn{n}}^2 \!+\! c_{\ref{lem:tupleest}c} \K \srn{2}{n}(u_h^{n},u_h^{n}) \Big)  \label{eq:stab:summedup}\\
		&\quad \!+\! \left(c_{\ref{lem:tupleest}a} \!+\! 4 \xxi \!+\! 2 \beta \right) \dt \sum_{n=2}^{N} \bdftwonorm{u_h^{n-1}}{\Pin{n-1} u_h^{n-2}}{\Omhn{n-1}}{2} \!+\! 2 \dt \beta^{-1} \sum_{n=2}^N \|g^n\|_{\Omhn{n}}^2. \nonumber
	\end{align}	
	Finally by choosing $\beta=\frac14$ and applying the discrete Gronwall's lemma with $\dt \xxi \le \frac{1}{16}$ we obtain the result. 
\end{proof}
Let us note that parts of the stability analysis of the unfitted BDF2 method have been treated in a much simplified setting in \cite[Section 5.2.1]{jinmaster}.

\subsection{Error estimates}
\begin{theorem} \label{thm:error}
	For $u \in \LinfW{k+1}$ with $\partial_t u \in \LinfH{k}$ and $\partial_t^3 u \in \L^\infty(\Q)$ the solution to \eqref{eq:eqn} with source term $g \in \LinfW{1}$, the numerical solution $\{u_h^n\}$ of \eqref{eq:disceq} with $r=2$ fulfills the error estimate
	\begin{align*}
		\| \err{N} \|_{\Omhn{N}}^2 &+ \dt \sum_{n=2}^N \left(\frac{\nu}{2} \| \nabla \err{n} \|_{\Omhn{n}}^2 + \gamma \srn{2}{n}(\err{n},\err{n})\right) \\ 
		&\lesssim \exp{(c \finaltime)} \Big( \big(\dt^4 + \K h^{2k} + h^{2q}\big) \mathcal{R}_{\ref{lemma:consist}}^2 + h^{2k} \mathcal{R}_{\ref{lemma:approx}}^2  \Big)
	\end{align*}
	with $c := c_{\ref{lem:tupleest}a} + 4 \xxi + 4$ independent of $h$, $\dt$ and $\finaltime$. 
\end{theorem}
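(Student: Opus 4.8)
The plan is to mimic the stability proof of \Cref{thm:stability} almost verbatim, but applied to the \emph{discrete} error $\errd{n}$, which solves the perturbed scheme \eqref{eq:discerreq}, and to dispose of the interpolation part $\erri{n}$ of the splitting \eqref{eq:split} via estimates that are already available. By the triangle inequality it suffices to bound $\|\errd{N}\|_{\Omhn{N}}$, $\dt\sum_{n=2}^{N}\nu\|\nabla\errd{n}\|_{\Omhn{n}}^2$ and $\dt\sum_{n=2}^{N}\gamma\srn{2}{n}(\errd{n},\errd{n})$, since the corresponding quantities for $\erri{n}$ are $\lesssim h^{2k}\|u\|_{\LinfW{k+1}}^2$ (respectively $\lesssim \K h^{2k}\|u\|_{\LinfH{k+1}}^2$ for the ghost-penalty term, as in \Cref{lemma:approx}). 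Throughout I assume the starting values $u_h^0,u_h^1$ are chosen accurately enough that $\bdftwonorm{\errd{1}}{\Pin{1}\errd{0}}{\Omhn{1}}{2}+\dt\sum_{n=0}^{1}\big(\nu\|\nabla\errd{n}\|_{\Omhn{n}}^2+\K\srn{2}{n}(\errd{n},\errd{n})\big)$ is dominated by the right-hand side of the claim (e.g.\ $u_h^0=\uIn{0}$ and a sufficiently accurate one-step startup for $u_h^1$).

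I would then test \eqref{eq:discerreq} with $v_h=4\dt\,\errd{n}$ and treat the left-hand side exactly as in \Cref{thm:stability}: \Cref{lem:bdfnorm} for the BDF2 stencil term, the coercivity bound \eqref{eq:bilinear} for $\bh{n}(\errd{n},\errd{n})$, and \Cref{lem:tupleest} to carry the transferred tuple $(\Pin{n}\errd{n-1},\Pimn{n-2}{n}\errd{n-2})$ from $\Omhn{n}$ back to $\Omhn{n-1}$. This reproduces the estimate leading to \eqref{eq:stab:summedup}, the only difference being that the right-hand side now additionally carries $4\dt\,(\consist{n}(\errd{n})+\interpol{n}(\errd{n}))$. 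Choosing again $c_{\ref{lem:tupleest}b}\le\tfrac12$, $\gamma\ge c_{\ref{lem:tupleest}c}\K$ and summing over $n=2,\dots,N$, with $E^n:=\bdftwonorm{\errd{n}}{\Pin{n}\errd{n-1}}{\Omhn{n}}{2}$ one obtains
\begin{equation*}
(1-c\dt)\,E^N+\dt\sum_{n=2}^{N}\!\Big(\tfrac{\nu}{2}\|\nabla\errd{n}\|_{\Omhn{n}}^2+\gamma\srn{2}{n}(\errd{n},\errd{n})\Big)\ \lesssim\ E^{1}_{\mathrm{start}}+c\,\dt\!\sum_{n=2}^{N}E^{n-1}+\dt\!\sum_{n=2}^{N}\big|\consist{n}(\errd{n})+\interpol{n}(\errd{n})\big|,
\end{equation*}
where $E^{1}_{\mathrm{start}}$ collects the startup contributions above and the $\nabla$- and $s$-terms on the left survive after the absorptions performed in \Cref{thm:stability}.

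It remains to estimate the consistency/interpolation term and close the recursion. For $\consist{n}(\errd{n})$, \Cref{lemma:consist} gives $|\consist{n}(\errd{n})|\lesssim(\dt^2+h^q+\K^{1/2}h^k)\,\mathcal{R}_{\ref{lemma:consist}}\,\enorm{\errd{n}}{n}$; splitting $\enorm{\errd{n}}{n}^2$ into its three pieces and using Young's inequality, the $\nabla$- and $s$-parts are absorbed into the left-hand side, the $\|\errd{n}\|_{\Omhn{n}}^2$-part (with its $\dt$) is kept for Gronwall, and the remainder is $\lesssim(\dt^4+h^{2q}+\K h^{2k})\mathcal{R}_{\ref{lemma:consist}}^2$. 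The term $\interpol{n,2}(\errd{n})$ is handled identically with the pointwise bound of \Cref{lemma:approx}, contributing (after $\dt\sum_n$) a term $\lesssim\K h^{2k}\|u\|_{\LinfH{k+1}}^2\le\K h^{2k}\mathcal{R}_{\ref{lemma:consist}}^2$. The genuinely nonlocal term is $\interpol{n,1}(\errd{n})$, which cannot be estimated step by step: using the \emph{summed} estimate of \Cref{lemma:approx} with the test function $\errd{n}$ in slot $n$, $\dt\sum_{n=2}^{N}|\interpol{n,1}(\errd{n})|\lesssim \dt\,N^{1/2}h^k\mathcal{R}_{\ref{lemma:approx}}\big(\sum_{n=2}^{N}\|\errd{n}\|_{\Omhn{n}}^2\big)^{1/2}=(N\dt)^{1/2}h^k\mathcal{R}_{\ref{lemma:approx}}\big(\dt\sum_{n=2}^{N}\|\errd{n}\|_{\Omhn{n}}^2\big)^{1/2}$, and since $N\dt=\finaltime$, Young's inequality turns this into $\tfrac12\,\dt\sum_{n=2}^{N}\|\errd{n}\|_{\Omhn{n}}^2+C\finaltime h^{2k}\mathcal{R}_{\ref{lemma:approx}}^2$, the first summand being exactly of the form the discrete Gronwall argument absorbs. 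Collecting everything, moving the $\dt\sum\|\errd{n}\|_{\Omhn{n}}^2$-terms to the left (possible for $\dt$ small), and applying the discrete Gronwall lemma as at the end of \Cref{thm:stability} yields the bound on $E^N\ge\|\errd{N}\|_{\Omhn{N}}^2$ and on the accumulated $\nabla$- and $s$-terms with the factor $\exp(c\finaltime)$; adding the interpolation-error contributions through the triangle inequality finishes the proof.

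I expect the main obstacle to be, as usual for this kind of analysis, the control of the accumulation of the mesh-transfer (projection) errors over $N\sim\dt^{-1}$ steps — but this is precisely what has already been isolated in \Cref{lem:tupleest} (via \Cref{ass:change}, $N_D$ bounded) and in the $Z_2$-estimate inside \Cref{lemma:approx} (via \Cref{ass:hhdt}, $h^4/\dt\to0$), so that the dangerous $\dt^{-2}$-weighted sums collapse to an $\O(N)$ contribution. At the level of the present proof the only remaining delicate point is bookkeeping: the term $\interpol{n,1}$ must be summed \emph{before} it is estimated and its bound then fed into the Gronwall step rather than absorbed pointwise, and one has to keep the total constant in front of $\dt\sum_{n}\|\errd{n}\|_{\Omhn{n}}^2$ small enough (through the choices of $c_{\ref{lem:tupleest}b}$, of $\beta$ in the Young step on $\fhn{n}$-type terms, and of $\theta$ in \Cref{lem:stripbound}) that it can be transferred to the left-hand side before Gronwall is applied.
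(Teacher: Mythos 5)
Your proposal is correct and follows essentially the same route as the paper's proof: split off the interpolation error, observe that $\errd{n}$ satisfies the discrete scheme with right-hand side $\consist{n}+\interpol{n}$, rerun the stability argument of \Cref{thm:stability} (testing with $4\dt\,\errd{n}$, \Cref{lem:bdfnorm}, \eqref{eq:bilinear}, \Cref{lem:tupleest}), bound the residual terms via \Cref{lemma:consist} and \Cref{lemma:approx} with Young's inequality — summing $\interpol{n,1}$ before estimating and using $N\dt=\finaltime$ — absorb the $\enorm{\cdot}{n}$-pieces, and close with the discrete Gronwall lemma. Your explicit treatment of the startup terms and of the interpolation contributions to the gradient and ghost-penalty sums is slightly more careful than the paper's terse write-up, but the argument is the same.
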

\begin{proof}
	By the error splitting \eqref{eq:split} we have with the interpolation error estimate
	\begin{equation} \label{eq:interror}
		\| \err{N} \|_{\Omhn{N}}^2 \lesssim \|\erri{N}\|_{\Omhn{N}}^2 + \|\errd{N}\|_{\Omhn{N}}^2 \lesssim h^{2k} \|u^{N}\|_{\H^{k+1}(\Omn{N})}^2 + \|\errd{N}\|_{\Omhn{N}}^2, 
	\end{equation}
	hence we only need to bound the last term $\|\errd{N}\|_{\Omhn{N}}$. 
	The error equation for $\errd{n}$, \eqref{eq:discerreq}, coincides with \eqref{eq:disceq} when replacing $u_h^n$ with $\errd{n}$ and $\fhn{n}(\cdot)$ with $\consist{n}(\cdot) + \interpol{n}(\cdot)$. Except for the treatment of the r.h.s. term, we proceed as in \Cref{thm:stability}.
	We turn our attention to $\consist{n}(\cdot) + \interpol{n}(\cdot)$, recall \Cref{lemma:consist} and \Cref{lemma:approx} and sum up:
	\begin{align}
		\sum_{n=2}^{N} \Big(\consist{n}(\errd{n}) &+ \interpol{n}(\errd{n})\Big) \lesssim (\dt^2 + \K^\frac12 h^k + h^q) \mathcal{R}_{\ref{lemma:consist}} \sum_{n=2}^N \enorm{\errd{n}}{n}
		+ N^{\frac12} h^k 
		\mathcal{R}_{\ref{lemma:approx}}  \|\errd{n}\|_{\ltwo} \nonumber \\
		&\lesssim N (\dt^4 + \K h^{2k} + h^{2q}) \mathcal{R}_{\ref{lemma:consist}}^2 + N h^{2k} \mathcal{R}_{\ref{lemma:approx}}^2  \nonumber \\
		&\qquad+ \sum_{n=1}^{N} \Big( \bdftwonorm{\errd{n}}{\Pin{n} \errd{n-1}}{\Omhn{n}}{2} + \frac{\nu}{2} \|\nabla \errd{n}\|_{\Omhn{n}}^2 + \gamma \srn{2}{n}(\errd{n},\errd{n}) \Big) \nonumber
	\end{align}
	Analogously to \eqref{eq:stab:summedup} we then arrive at
	\begin{align*}
		\big(&(1 - 4 \dt (1+\xxi)\big) \bdftwonorm{\errd{N}}{\Pin{N} \errd{N-1}}{\Omhn{N}}{2} + \dt \sum_{n=2}^{N} \frac{\nu}{2} \|\nabla \errd{n}\|_{\Omhn{n}}^2 + \dt \sum_{n=2}^{N} \gamma \srn{2}{n}(\errd{n},\errd{n}) \\[-1ex]
		\lesssim & \bdftwonorm{\errd{1}}{\Pin{1} \errd{0}}{\Omhn{1}}{2} + \dt \sum_{n=0}^1 \Big( \nu \|\nabla \errd{n}\|_{\Omhn{n}}^2 + \gamma \srn{2}{n}(\errd{n},\errd{n}) \Big) \nonumber 
		+ \finaltime (\dt^4 + \K h^{2k} + h^{2q}) \mathcal{R}_{\ref{lemma:consist}}^2 \\[-1ex]
    & + \finaltime h^{2k} \mathcal{R}_{\ref{lemma:approx}}^2 + \left(c_{\ref{lem:tupleest}a} + 4 \xxi + 4\right) \dt \sum_{n=2}^{N} \bdftwonorm{\errd{n-1}}{\Pin{n-1} \errd{n-2}}{\Omhn{n-1}}{2}. \nonumber
	\end{align*}	
	 We apply discrete Gronwall's inequality with $\dt (1+\xxi) \le \frac18$ and make use of $\|\errd{N}\|_{\Omhn{N}} \le \bdftwonorm{\errd{N}}{\Pin{N} \errd{N-1}}{\Omhn{N}}{}$, \eqref{eq:interror} and set $c := c_{\ref{lem:tupleest}a} + 4 (\xxi+1)$ to obtain the claim.
\end{proof}
\new{
\begin{remark}
  Let us stress that the result in \cref{thm:error} is not robust in the diffusivity $\nu$ because of the exponential term $c$ which depends on $\xi$, cf. also \cref{rem:nu}. 
  Furthermore, we only considered an $h$-version analysis here. An extension to a $p$ or even $hp$-version is outside the scope of this study due to the excessive use of inverse inequalities, e.g. in the analysis of the transfer operator and the ghost penalty mechanism.
\end{remark} 
}

\begin{remark}[Impact of the anisotropy factor $\K$]
The previous error estimate involves the factor $\K h^{2k} \lesssim h^{2k} + \Delta t \cdot h^{2k-1}$. Hence, at first glance it seems that an anisotropy between space and time grid resolution, i.e. when $\Delta t / h \to \infty$ for $h, \Delta t \to 0$, can have a negative impact on the convergence rate. For $k>1$ we can estimate
  $ \Delta t \cdot h^{2k-1} \leq \Delta t^2 \cdot h^k + h^{3k-2} \lesssim \Delta t^4 + h^{2k}$ and can hence conclude that the factor $\K$ has no influence on the convergence rates. However, for $k=1$ we can indeed have that $\Delta t \cdot h$ converges slower than $\Delta t^4 + h^2$ for $\Delta t / h \to \infty$ for $h, \Delta t \to 0$.
\end{remark}

\section{Numerical experiments}
\label{sec:experiments}
In this section we present numerical examples for the method proposed above. The results verify the order of accuracy corresponding to the error analysis, demonstrate the stability with respect to the variation of discretization parameters, and show the robustness to handle evolving domains even in complex configurations. 
 All examples are implemented with the finite element package \texttt{NGSolve}\cite{ngsolve} and its Add-on \texttt{ngsxfem}\cite{ngsxfem}. For reproducibility of all subsequent numerical results, we provide the generating code and instructions on how to obtain them in the repository \url{https://gitlab.gwdg.de/lehrenfeld/repro-isop-unf-bdf-fem}.

In slight contrast to the analysis, we make two changes in the discretization. Firstly, use the following bilinear form instead of $\bh{n}(u_h,v_h)$:
$\int_{\Omhn{n}} \nu \nabla u_h \cdot \nabla v_h \; dx + \int_{\Omhn{n}} (\w^{\ext} \cdot \nabla u_h)v_h \; dx + \int_{\Omhn{n}} (\operatorname{div} \w^{\ext}) u_h v_h \; dx$.  Secondly, we do not include the additional element layers for the extension, i.e., we skip the "+" layers defined in \Cref{sec:discrete neigh}. However, in all subsequent numerical examples we ensured that the desired inclusion properties hold.
In the examples we consider an additional source term in order to simplify the construction of manufactured solutions. We hence add a linear form 
$\fhn{n}(v_h) := \int_{\Omhn{n}} g v_h dx$
to the right hand side of the discrete variational formulation.

Related to the previous error analysis, we investigate the numerical errors in terms of the following discrete space-time norms
\begin{align}
\| \err{} \|_{\L^2(\H^1)}^2 \!:=\! \sum_{n=1}^N \!\! \Delta t \|\nabla \err{}\|_{\L^2(\Omhn{n})}^2, \ 
\|\err{}\|_{\L^\infty(\L^2)} \!\!:=\!\!\! \max_{n=1,...,N} \|\err{}\|_{\L^2(\Omhn{n})}, \ \err{} = u - u_h,
\end{align}
where we recall that $u$ is identified with its extension $u^{\ext}$ to the suitable neighborhood.
For each example we start with an initial (quasi-uniform) spatial and (uniform) temporal mesh with mesh size $h_0$ and $\Delta t_0$, respectively, and then apply successive regular refinements in space and time. We denote the corresponding refinement levels in space by $L_x$,  and those in time by $L_t$, s.t. $h = h_0 \cdot 2^{- L_x}$, $\Delta t = \Delta t_0 \cdot 2^{- L_t}$.

\subsection{Kite geometry}
\label{sec:ex2}

In this first example we consider a disk deforming towards a kite shape, cf. \Cref{fig:bubble}. 
\begin{figure}\label{fig:bubble} 
  \centering
  \begin{center}
  \small $-1$ \includegraphics[width=0.50\textwidth, trim = 8.5cm  67.5cm 8.5cm 0.5cm, clip = True]{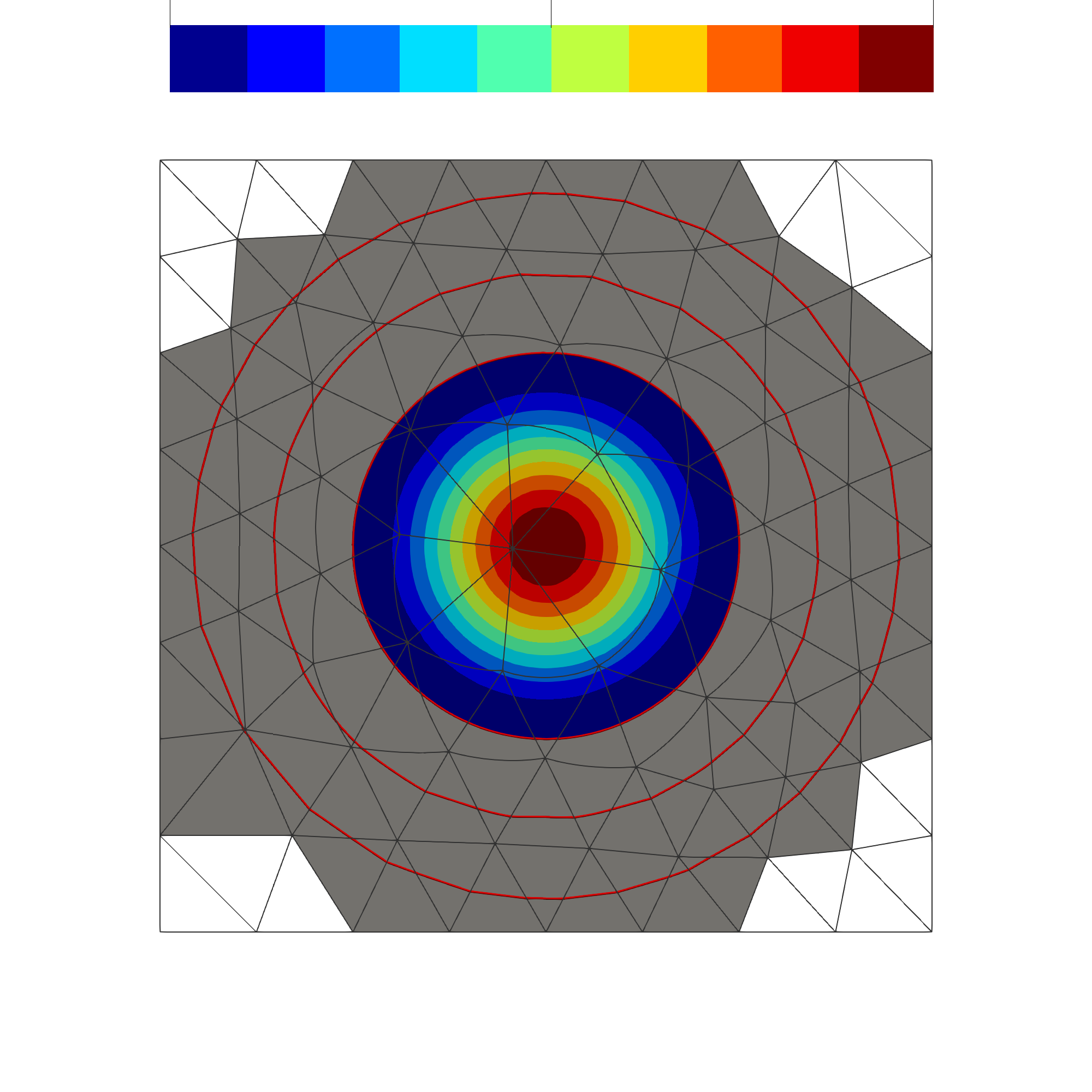} $1$\\
  \includegraphics[width=0.315\textwidth, trim = 10.25cm 8.5cm 10.25cm 9.5cm, clip = True]{kite1.png}
  \includegraphics[width=0.315\textwidth, trim = 10.25cm 8.5cm 10.25cm 9.5cm, clip = True]{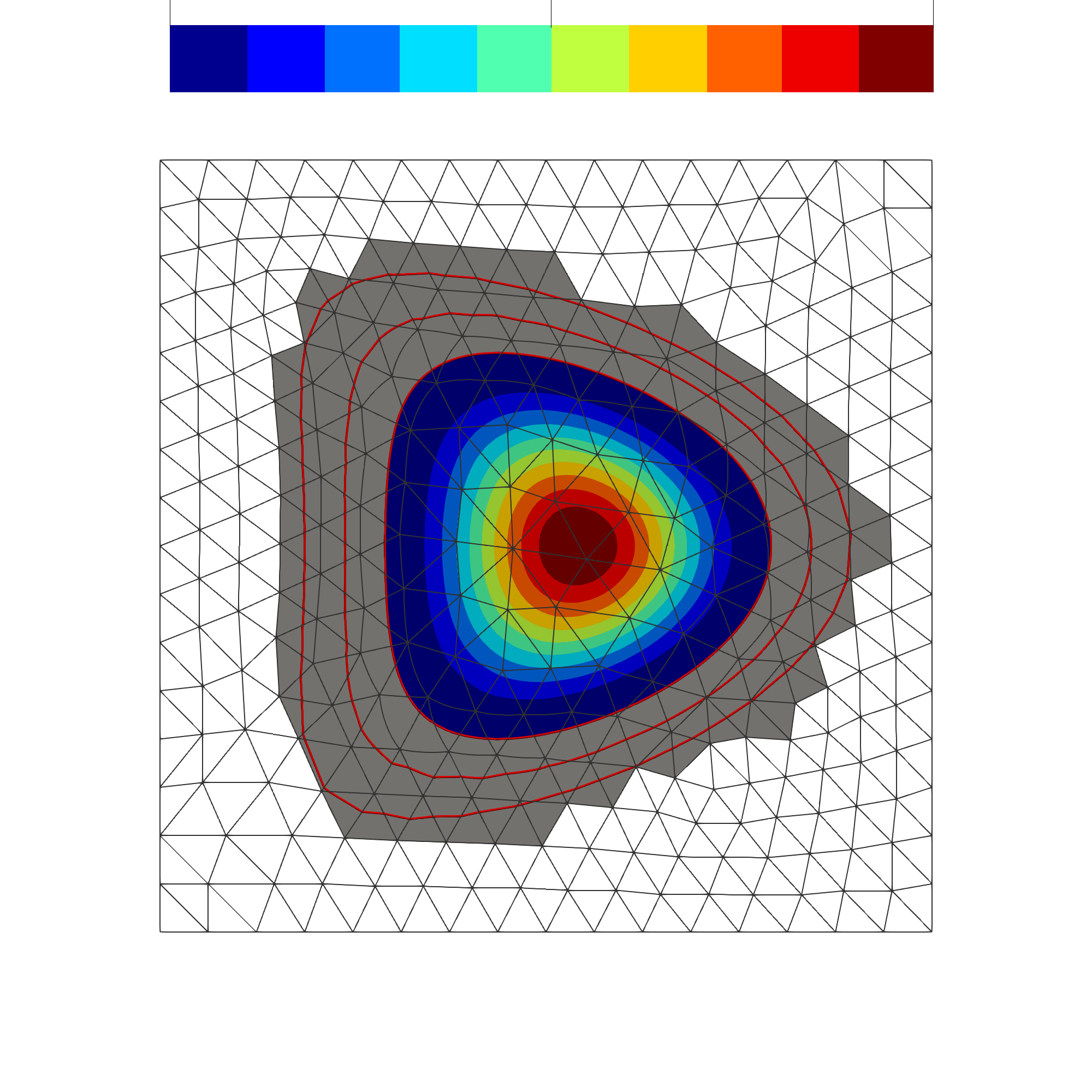}
  \includegraphics[width=0.315\textwidth, trim = 10.25cm 8.5cm 10.25cm 9.5cm, clip = True]{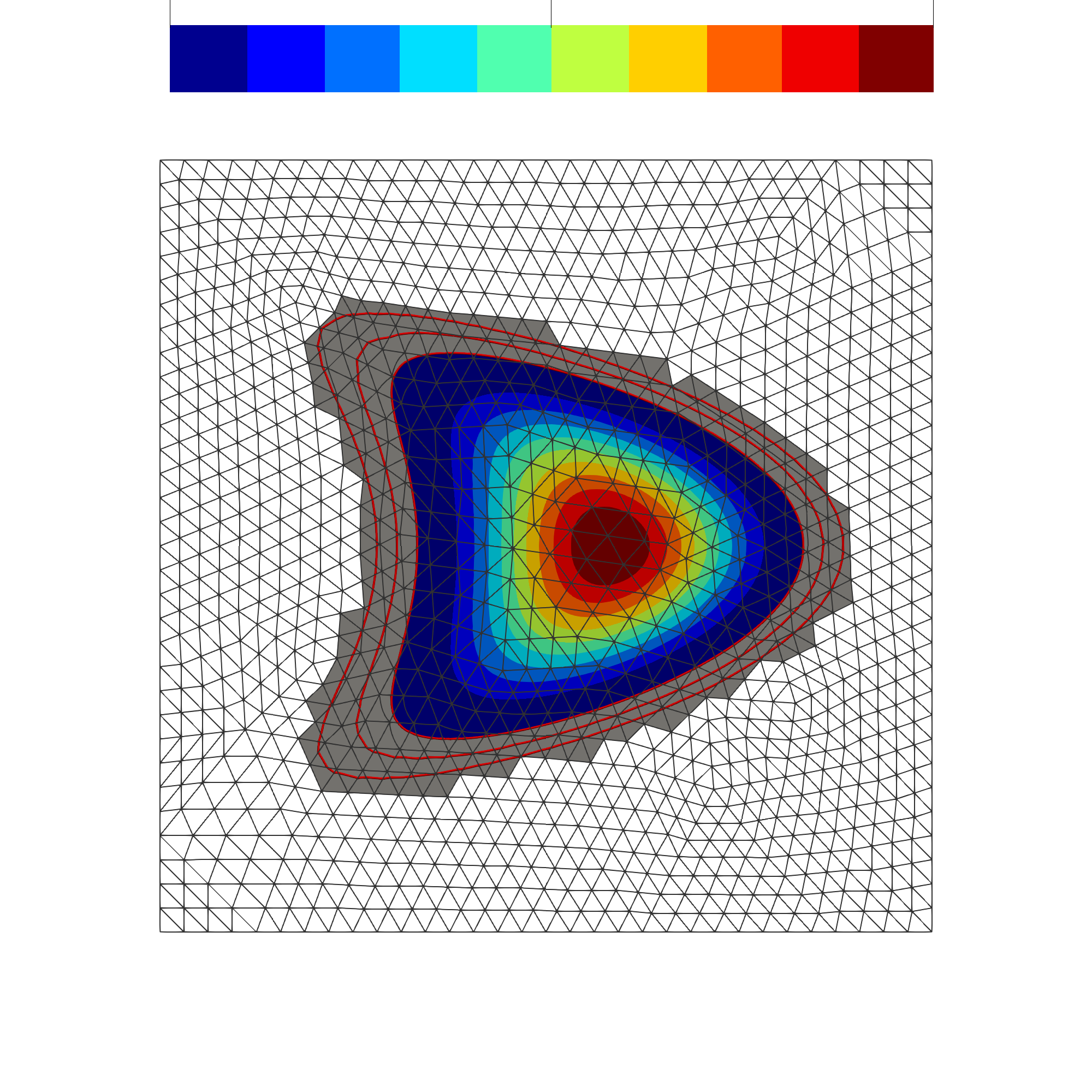}
  \end{center}
  \vspace*{-0.4cm}
  \caption{Numerical example from \Cref{sec:ex2} with $k=q=3$ and $r=2$ (BDF-2): Mesh, active mesh (grey), neighborhood extension (red line) and discrete solutions on $\Omega(t)$ for $L_x=0$, $L_t=3$, $t=0$ (left), for $L_x=1$, $L_t=4$, $t=\finaltime/2$ (center) and $L_x=2$, $L_t=5$, $t=\finaltime$ (right).}
\end{figure}
The background domain is fixed to $\B = (-1,1)^2$ and the final time in the simulation to $\finaltime=1$. 
The evolution of the geometry is represented by the following level set function $\phi$:
\begin{align*}
\phi(\x,t) = \|\x - \rho(\x,t)\|_2 - \frac12 \text{ with }\rho(\x,t) = \w(\x)\!\cdot\! t, ~\w \!=\! \big( 1/6 - 5/3 y^2, 0 \big)^{\!T}. 
\end{align*}
Note that $\phi$ is not a signed distance function for $t>0$. 
The r.h.s. function $g$ is set so that $u(\x,t) = \cos{\frac{\pi}{R}\|\x-\rho(\x,t)\|_2}$ (which has $-\nabla u \cdot \n = 0$, on $\G(t), ~t \in [0,\finaltime]$) solves the PDE. The initial spatial and temporal resolutions are $h_0 = 0.25$ and $\dt_0 = \finaltime$. 

In \Cref{fig:b_p2bdf2} the convergence of the error norms $\L^{\infty}(\L^2)$ and $\L^2(\H^1)$ are displayed for $r=k=q=2$ and we observe that the convergence rate in time is $\Delta t^2$ in both norms whereas the convergence rate in space is $h^2$ in $\L^{2}(\H^1)$ and $h^3$ in $\L^{\infty}(\L^2)$. These results are in agreement with the previous analysis except for the $h^3$ convergence in $\L^{\infty}(\L^2)$ which is even one order better than predicted. 
 
\begin{figure}\label{fig:b_p2bdf2}
  \centering
  \includegraphics[trim=0.075cm 0cm 15.1cm 0.5cm, clip=true, width=0.97\textwidth]{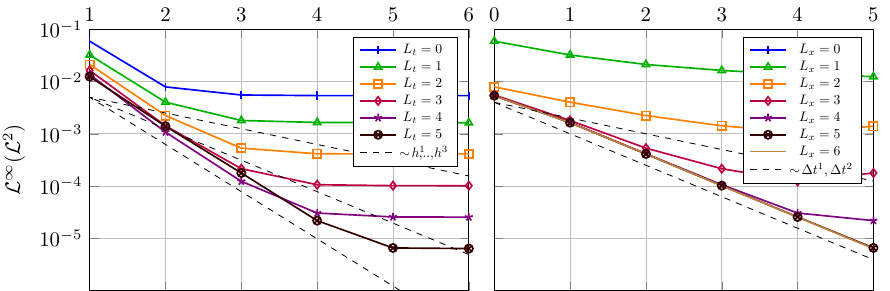} \\
  \includegraphics[trim=15.175cm 0cm 0.0cm 0.0cm, clip=true, width=0.97\textwidth]{plot_kite_P2BDF2.pdf}  
  \vspace*{-0.3cm}
  \caption{$\L^\infty(\L^2)$- and $\L^2(\H^1)$-errors of BDF-2 discretization for $k=q=2$ and $c_\gamma=0.1$ for several levels of mesh and time refinements in the example from \Cref{sec:ex2}.}
\end{figure}

\subsection{Two-phase mass transport}
\label{sec:ex3}
We finally consider a more complex example that is beyond the scope of the previous analysis and third order methods in space and time. In this example we consider a two-phase interface problem as in \cite{L_PHD_2015}.
A circular interface that separates two materials is moving within the background domain $\Omega = (0,2)^2$ for $t \in [0,\finaltime]$, $\finaltime = \frac12$. The two domains are described by the level set function
$\phi(\x,t) = \Vert \x - \rho(t) \Vert_2 - \frac13$ with $\rho(t) = (\frac12+\frac{1}{4\pi}\sin(2\pi t),1)$ which describes a circle moving with time-dependent velocity in the horizontal direction. We define $\Omega_1(t) = \{ \phi(\x,t) < 0 \}$, $\Omega_2(t) = \{ \phi(\x,t) > 0 \}$. 
On both subdomains the convection-diffusion equation $\partial_t u + \w \cdot \nabla u - \nu_i \Delta u = g$, $\w = \partial_t \rho(t)$ is solved. We prescribe Dirichlet data on $\partial \Omega$ and writing $[\![ w ]\!] = w_1 - w_2$ for the jump, and impose the interface conditions $[\![-\nu \nabla u \cdot n]\!]=0$, $[\![\beta u]\!]=0$ on $\Gamma(t) = \overline{\Omega}_1(t)\cap\overline{\Omega}_2(t)$ for $\nu_i,~i=1,2$ the diffusion constants and $\beta_i,~i=1,2$ the Henry weights for the Henry interface condition. We choose $(\nu_1,\nu_2) = (10,20)$ and  $(\beta_1,\beta_2) = (2,1)$.
The corresponding boundary data and r.h.s. data $f$ is prescribed such that $u(\x,t) = \sin(\pi t) u_i(\Vert \x - \rho(t) \Vert)$ with $u_1(r) = a + b r^2$ and $u_2(r) = \cos(\pi r)$, $a \approx 1.1569$, $b \approx -8.1621$, is the unique solution. 

For the discretization we use an unfitted discretization similar to \eqref{eq:disceq}, but for both subdomains with a Nitsche formulation for the coupling through the interface conditions. 
The discretization takes the form: 

Find $u_h^n = (u_{1,h}^n,u_{2,h}^n) \in \Vrn{r,1}{n} \times \Vrn{r,1}{n}, ~n = r,...,N$ for given $u_{i,h}^0 \in \Vrn{r,i}{0}$,..., $u_{i,h}^{r-1} \in \Vrn{r,i}{r-1},i=1,2$, s.t. for all $v_h = (v_{1,h},v_{2,h}) \in \Vrn{r,1}{n} \times \Vrn{r,2}{n}$ there holds
\begin{align}
  & \sum_{i=1,2} \beta_i \Big\{ \int_{\Omihn{i}{n}} \dtBDFr{r}(u_{i,h}^n,  ..., \Pimn{n-r}{n} u_{i,h}^{n-r})  v_h ~dx  + \bih{i}{n}(u_h^n,v_h) + \gamma \sirn{i}{r}{n}(u_h^n,v_h) \Big\} \nonumber \\
  &\! +\!\! \int_{\Gamma} \!\!-\{\!\!\{ \nu \nabla u_h \!\cdot\! n_{\Gamma}\!\}\!\!\} [\![\beta v_h]\!] \!-\! \{\!\!\{ \nu \nabla v_h \!\cdot\! n_{\Gamma}\!\}\!\!\} [\![\beta u_h]\!] \!+\! \frac{ \!\{\!\!\{\nu\}\!\!\}\! \lambda_N}{h} [\![\beta u_h]\!] [\![\beta v_h]\!] ds \label{eq:disceq:2ph} 
  = \!\!\!\sum_{i=1,2} \! \beta_i \!\! \int_{\Omihn{i}{n}} \!\!\!\!\! g_i  v_{i,h} dx\!,\! \nonumber
\end{align}
where 
$\Omihn{i}{n}$ is the discrete geometry approximation to $\Om_i(t^n)$ as previously,
$g_i$ is a proper extension of $g|_{\Omega_i}$ to $\Omihn{i}{n}$,
and $\bih{i}{n}(\cdot,\cdot)$, $\sirn{i}{r}{n}(\cdot,\cdot)$ are the bilinear forms analogously to before, but now acting on $\Omihn{i}{n}$ and on a corresponding set of facets $\Firn{i}{r}{n}$, respectively. In the Nitsche terms, we used $\{\!\!\{w\}\!\!\} := \frac{w_1 + w_2}{2} $ and for $\lambda_N$ we choose $40$.
The spatial and temporal resolution are $h_0 = 0.5$ and $\dt_0 = \finaltime$ initially.


In \Cref{fig:b_p3bdf3} the convergence of the error norms $\L^{\infty}(\L^2)$ and $\L^2(\H^1)$ (which are composed as the summation of the corresponding norms on the subdomains $i=1,2$) are displayed for $r=k=q=3$ and we observe that the convergence rate in time is $\Delta t^3$ in both norms whereas the convergence rate in space is $h^3$. The convergence rate in space in the $\L^{\infty}(\L^2)$ norm is even higher. Again, these results are in agreement with the expectations from the case of only one moving domain.

\begin{figure}\label{fig:b_p3bdf3}
  \centering
  \includegraphics[trim=0.075cm 0cm 15.1cm 0.5cm, clip=true, width=0.97\textwidth]{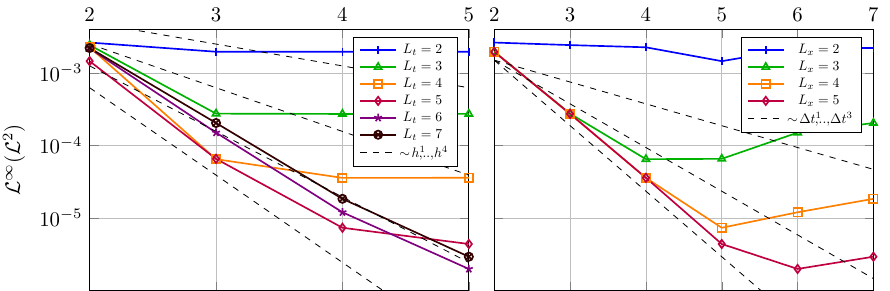} \\
  \includegraphics[trim=15.175cm 0cm 0.0cm 0.0cm, clip=true, width=0.97\textwidth]{plot_intf_P3BDF3.pdf}  
  \vspace*{-0.3cm}
  \caption{$\L^\infty(\L^2)$- and $\L^2(\H^1)$-errors of BDF-3 discretization for $k=q=3$ and $c_\gamma=10$ for several levels of mesh and time refinements in the example from \Cref{sec:ex3}.}
  \vspace*{-0.3cm}
\end{figure}

\section{Conclusion}
\label{sec:conclusion}
In this paper we have extended the numerical method from \cite{LO_ESAIM_2019} for solving PDEs on evolving domains towards \emph{provably} higher order of accuracy in space and time. 
The development of the method in contrast to \cite{LO_ESAIM_2019} consists of the higher-order in space isoparametric unfitted FEM and its analysis on the one hand, and the analysis of the time stepping from an implicit Euler to a high-order BDF scheme on the other hand. 
The method shows several advantages over its competitors aforementioned in \Cref{sec:intro}. 
In comparison to the space-time approach, the proposed method based on the standard isoparametric unfitted FEM involves only spatial integrals and finite element spaces, which leads to an implementational and computational complexity that is comparable to stationary unfitted problems. 
While compared to the characteristic Galerkin scheme, the method trades the extra cost and complexity of the Lagrangian tracking of the geometry (to high-order accuracy) with a comparably simple ghost-penalty-based discrete extension. 

\section*{Acknowledgments}
The authors gratefully acknowledge funding by the DFG (German Research Foundation) within the project ``LE 3726/1-1''. 

\shortonly{\newpage}
\appendix
\section{Selected proofs}\label{proofs}

\begin{proof}[\underline{Proof of \Cref{cor:projstab}}]\label{proof:cor:projstab}
  Let us consider a facet $F^{n} \in \Frn{1}{n}$ with the corresponding uncurved facet $\hat{F} = \Theta^{-n}(F^n)$ and hence $F^{m} = \Theta^{m}(\hat{F}) \in \Frn{2}{m}$. Their adjacent elements of the patches are denoted as $\omF{F^n} = T^{n}_1 \cup T^{n}_2$, $\homega{\hat{F}} = \hat{T}_1 \cup \hat{T}_2$, and $\omF{F^{m}} = T^{m}_1 \cup T^{m}_2$, respectively. Note that $\Theta_{T_1}^m$, $\Theta_{T_1}^n$, $\Theta_{T_2}^m$ and $\Theta_{T_2}^n$ are involved here. Let $u_h = \Pin{n} v_h \in \Vhn{n}$ be the discrete function after projection and recall the notation from \Cref{sec:fulllinear}, i.e. 
  $u_i  = \hat{u}_i \circ \Theta_{{T}_i}^{-n*},~i=1,2$ with $\hat{u}_i := \Ep \big(u_h|_{T^n_i} \circ \Theta_{{T}_i}^n\big)$  and
  $v_i  = \hat{v}_i \circ \Theta_{{T}_i}^{-m*},~i=1,2$ with $\hat{v}_i := \Ep \big(v_h|_{T^{m}_i} \circ \Theta_{{T}_i}^{m}\big)$.
  We further introduce the notation for the properly extended neighbor functions:
  $\hat{u}_j^* := \hat{u}_j \circ \Upsilon^n(\hat{x})$ and 
  $\hat{v}_j^* := \hat{v}_j \circ \Upsilon^m(\hat{x})$ with
  $\Upsilon^n = \Theta_{T_j}^{-n*} \circ \Theta_{T_i}^{n}$ and 
  $\Upsilon^m = \Theta_{T_j}^{-m*} \circ \Theta_{T_i}^{m}$.
  Taking the definition of the ghost penalty from \eqref{eq:ghostpenalty} and exploiting the smallness of the deformations, we find after transformation to $\hat{T}_i$, $\hat{T}_j$
  \begin{equation*}
    h^2 s_{F}^{n}(u_h,u_h) 
    \leq 2 \sum_{i,j=1,2} \int_{\hat{T}_i} (\hat{u}_i - \hat{u}_j^*)^2 d\hat{x} \text{ and } \sum_{i,j=1,2} \int_{\hat{T}_i} (\hat{v}_i - \hat{v}_j^*)^2 d\hat{x} \leq 2 h^2 s_{F}^{m}(v_h,v_h).
  \end{equation*}
   Let $a := \hat{u}_i - \hat{u}^*_j$ and $b := \hat{v}_i - \hat{v}^*_j$. Then with 
  $A := \hat{u}_i - \hat{v}_i$ and $B := \hat{u}^*_j - \hat{v}^*_j$ we have $a-b = A-B$ so that there holds
  \begin{align*}
    a^2 - b^2 = \underbrace{(a-b)}_{(A-B)}(a+b) \leq (A-B)^2 +\frac{1}{4}(a+b)^2
    \leq 2(A^2+B^2) +\frac{1}{2}(a^2+b^2)  \\[-2ex]
    \Longrightarrow \frac12 a^2 - \frac32 b^2 \leq 2(A^2+B^2)
    \Longrightarrow a^2 \leq 3b^2 + 4 (A^2+B^2).
  \end{align*}
  Hence, we obtain 
  \begin{align*}
    s_{F}^{n}(u_h,u_h) & \lesssim s_{F}^{m}(v_h,v_h)  + h^{-2}\sum_{\substack{i,j=1,2 \\ i\neq j}} \Big( \|\underbrace{\hat{u}_i - \hat{v}_i}_{=A}\|_{\hat{T}_i}^2 + \|\underbrace{\hat{u}_i^* - \hat{v}_i^*}_{=B}\|_{\hat{T}_i}^2 \Big)
  \end{align*}
  As a polynomial in $\mathcal{P}^k(\mathbb{R}^d)$ we can retreat $A(\hat{x})$ to $\Thneps{}{i,-\eps}$ and find \vspace*{-0.35cm}
  \begin{align*}
    \| A \|_{\hat{T}_i}  
    &
    \simeq 
    \| A \|_{\Thneps{}{i,-\eps}} 
    \lesssim  
    \| v_h \circ \Theta_{{T}_i}^n - \overbrace{u_h \circ \Theta_{{T}_i}^n}^{\hat{u}_i} \|_{\Thneps{}{i,-\eps}} 
    + 
    \| v_h \circ \Theta_{{T}_i}^n - \overbrace{v_h \circ \Theta_{{T}_i}^{m}}^{\hat{v}_i} \|_{\Thneps{}{i,-\eps}}  \\
    &
    \lesssim 
    \| (\id - \Pi^{n}) v_h \|_{T^n_{i}} + 
    \| v_h \circ (\Theta_{{T}_i}^n - \Theta_{{T}_i}^{m}) \|_{\Thneps{}{i,-\eps}}  
    \lesssim 
    h^2 \| \nabla v_h \|_{\omT{T^m_i}}
  \end{align*}
  where in the last step we made use of \Cref{lemma:proj} and the closeness of $\Theta_{{T}}^{n}$ and $\Theta_{{T}}^{m}$. 
  For $B(\hat{x})$ we first note that $\|\Upsilon^n - \Upsilon^m\|_{\L^{\infty}(\hat{T}_i)} \lesssim h^2$ and recall that $\hat{u}_j - \hat{v}_j \in \mathcal{P}^k(\mathbb{R}^d)$ s.t. with standard scaling arguments we have $\| \hat{u}_j - \hat{v}_j \|_{\Upsilon^n(\hat{T}_i)} \lesssim \| \hat{u}_j - \hat{v}_j \|_{\Thneps{}{j,-\eps}}$ and
  \begin{align*}
    \| B \|_{\hat{T}_i}
    &\lesssim \| (\hat{u}_j - \hat{v}_j) \circ \Upsilon^n\|_{\hat{T}_i} \!\!+\! \| \hat{v}_j \circ \Upsilon^n - \hat{v}_j \circ \Upsilon^m\|_{\hat{T}_j}
    \lesssim \| \hat{u}_j - \hat{v}_j \|_{\Upsilon^n(\hat{T}_{i})} + h^2 \| \nabla v_h \|_{T_j^m}
  \\
    &\lesssim \| u_h \circ \Theta_{T_j}^n - v_h \circ \Theta_{T_j}^{m} \|_{\Thneps{}{j,-\eps}} + h^2 \| \nabla v_h \|_{T_j^m}     
  \\
    &\lesssim
      \| (u_h - v_h) \circ \Theta_{T_j}^{n} \|_{\Thneps{}{j,-\eps}}
      + \| v_h \circ \Theta_{T_j}^n - v_h \circ \Theta_{T_j}^{m} \|_{\Thneps{}{j,-\eps}}
      + h^2 \| \nabla v_h \|_{T_j^m}     
  \\
    &\lesssim
      \| \Pi^{n} u_h - v_h \|_{T_j^n}
      + h^2 \| \nabla v_h \|_{T_j^m}     
  \lesssim
  h^2 \| \nabla v_h \|_{\omT{T_j^m}}.
  \end{align*}
  Putting all together yields the proof.
  \end{proof}

\begin{proof}[\underline{Proof of \Cref{lemma:consist}}]\label{proof:lemma:consist}
  Recall \eqref{eq:consist}, i.e. $\consist{n}(v_h) = \sum_{i=1}^4 \consist{n,i}(v_h)$. 
  We start with $\consist{n,1}(v_h)$ by triangle inequality \vspace*{-0.5cm}
  \begin{equation*}
  \! | \consist{n,1} (v_h) | \! 
    \le \overbrace{\left| \int_{\Omhn{n}} \!\!\!
      \left(
      \dtBDFr{2}(u^n,u^{n-1}\!\!\!,u^{n-2})
      - \partial_t u^n \right) v_h dx \right|}^{\consist{n,a}} + \overbrace{\left| \int_{\Omhn{n}} \!\!\! \partial_t u^n v_h dx - \int_{\Omn{n}} \!\!\! \partial_t u^n v_h^{\lift} dx \right|}^{\consist{n,b}}. 
  \end{equation*}
  The first term is similar to the one in \cite[Lemma 5.11]{LO_ESAIM_2019} but differs due to the high-order time difference stencil. By elementary calculations based on partial integration on $[t_{n-2},t_{n-1}]$ and $[t_{n-1},t_n]$ one obtains
  \begin{equation*}
  \consist{n,a} = \int_{\Omhn{n}} \int_{t_{n-2}}^{t_{n}} z(t) ~\partial_{t}^3 u ~dt ~ v_h ~dx
  \end{equation*}
  for $z \in C^1\left([t_{n-2},t_{n}]\right)$ satisfying $z(t)|_{[t_{n-2},t_{n-1}]} = -\frac{1}{4\dt} (t-t_{n-2})^2$ and $z(t)|_{(t_{n-1},t_{n}]} = \frac{1}{12\dt} \left( (3t-t_{n-1}-\Delta t)^2 - 4 \Delta t^2\right)$ and hence $\|z\|_{\L^{\infty}\left([t_{n-2},t_{n}]\right)} \leq \frac13 \dt$. We have with
  $
  \|\partial_t^3 u\|_{\L^\infty(\Omhn{n} \times [t_{n-2},t_n])} \leq  \|\partial_t^3 u\|_{\L^\infty(\Qe)}
  \leq \|\partial_t^3 u\|_{\L^\infty(\Q)}$ the bound
  \begin{align*}
  \consist{n,a} \lesssim \Delta t^2 \|\partial_t^3 u\|_{\L^\infty(\Q)} \|v_h\|_{\Omhn{n}}.
  \end{align*}
  The second term $\consist{n,b}$ involves the high-order geometrical approximation error, i.e. the error from $\Omhn{n} := \Theta(\Omphih{\hat{\phi}_h}{n})$. By notating $\Phi:= \PPsi \circ \Theta^{-1}: \Om_{h}^n \to \Omn{n}$, i.e., a mapping from the discrete domain to the exact domain, and applying an integral transformation we have
  \begin{align*}
  |\consist{n,b}| 
    = & \left| \int_{\Omhn{n}} \left( \partial_t u^n  - (\partial_t u^n \circ \Phi) \det(D \Phi) \right) v_h dx \right|
  \end{align*}
  Splitting the integrand (expect for the factor $v_h$) into the sum of 
  $\partial_t u^n  - (\partial_t u^n \circ \Phi)$ and 
  $(\partial_t u^n \circ \Phi)(1 - \det(D \Phi)) $ yields
  \begin{align*} 
  |\consist{n,b}| 
  &\le \left| \int_{\Omhn{n}} |\id - \Phi| ~ \| \nabla \partial_t u \|_{\L^\infty(\Om_{\epsilon}^n)} v_h dx \right| + \left| \int_{\Omhn{n}} |1- \det(D \Phi)| ~ \|\partial_t u\|_{\L^\infty(\Om_{\epsilon}^n)} v_h dx \right| \nonumber \\[-2ex]
  &\lesssim h^{q+1} \|u\|_{\W^{2}_{\infty}(\Q)} \|v_h\|_{\L^1(\Omhn{n})} + h^q \|u \|_{\W^{1}_{\infty}(\Q)} \|v_h\|_{\L^1(\Omhn{n})} 
  \lesssim h^q \|u\|_{\W^{2}_{\infty}(\Q)} \|v_h\|_{\Omhn{n}}, 
  \end{align*}
  where we made use of
  \begin{equation*}
    \left| \partial_t u^n(x)  - (\partial_t u^n \circ \Phi) (x) \right| \le \| \nabla \partial_t u \|_{\L^\infty(\Om_{\epsilon}^n)} \left| x - \Phi (x) \right|
  \end{equation*}
  and \Cref{propertiesdh} to bound the geometrical errors. 
  
  The bound for the second part follows similar lines in \cite[Lemma 5.11]{LO_ESAIM_2019} and yields 
  \begin{equation*}
  |\consist{n,2}(v_h)| = |\bh{n}(u^n,v_h) - b^n(u^n,v_h^{\lift})| \lesssim h^q \|u\|_{\W^{2}_{\infty}(\Q)} \|v_h\|_{\H^1(\Omhn{n})}. 
  \end{equation*}
  
  The third part is obtained with a Cauchy-Schwarz inequality and the estimate from \cite[Lemma 5.8]{LO_ESAIM_2019}, which is still valid on deformed meshes. 
  \begin{align*}
  \consist{n,3}(v_h) &= \gamma \srn{2}{n} (u^n, v_h) 
  \le \gamma \srn{2}{n} (u^n, u^n)^{\frac{1}{2}} \srn{2}{n} (v_h, v_h)^{\frac{1}{2}} \nonumber \\
  &\lesssim \gamma h^k \|u^n\|_{\H^{k+1}(\O_{h,2}^n)} \srn{2}{n} (v_h, v_h)^{\frac{1}{2}} 
  \lesssim \K^\frac12 h^k \|u^n\|_{\H^{k+1}(\Omn{n})} \left( \gamma \srn{2}{n} (v_h, v_h) \right)^{\frac{1}{2}}, 
  \end{align*}
  where for the last inequality we make use of $\O_{h,2}^n \subset \Om^{n}_{\epsilon}$ and continuity of the extension. 

  Finally, the fourth part is estimated analogously to $|\consist{n,b}|$ as follows
  \begin{align*}
  |&\consist{n,4}(v_h)|
  = \left| \int_{\Omhn{n}} \left( (g \circ \Phi) \det(D \Phi) - g \right) v_h ~dx \right| \le \quad  \|v_h\|_{\L^1(\Omhn{n})} \quad \cdot \\
  & \quad \qquad \cdot \big( \|1 - \det(D \Phi)\|_{\L^\infty(\Om_{h,\eps}^n)} \|g\|_{\L^\infty(\Om_{h,\eps}^n)}   \nonumber 
  + \|\id - \Phi\|_{\L^\infty(\Om_{h,\eps}^n)} \|\nabla g\|_{\L^\infty(\Om_{h,\eps}^n)} \big) \\
  &\lesssim \big( h^q \|g\|_{\L^\infty(\Omhn{n})} + h^{q+1} \|g\|_{\W^1_\infty(\Omhn{n})} \big) \cdot \|v_h\|_{\Omhn{n}} \nonumber 
  \lesssim \sup_{t \in [0,\finaltime]} h^q \|g\|_{\W^1_\infty(\Om(t))} \|v_h\|_{\Omhn{n}}. \nonumber
  \end{align*}  
\end{proof}  

\extendedonly{

}

\bibliographystyle{siam}

\bibliography{literatur}
\end{document}